\documentclass[12pt,linesnumbered,ruled,vlined]{article}
\usepackage{mathptmx}
\usepackage[utf8x]{inputenc}
\usepackage{color}
\usepackage{array}
\usepackage{booktabs}
\usepackage{mathtools}
\usepackage{amsmath}
\usepackage{amsthm}
\usepackage{amssymb}
\usepackage{graphicx}
\usepackage{multirow} 
\usepackage[unicode=true,pdfusetitle,
 bookmarks=true,bookmarksnumbered=false,bookmarksopen=false,
 breaklinks=false,pdfborder={0 0 1},backref=false,colorlinks=true]
 {hyperref}
\hypersetup{
 pdfborderstyle={},pdfborderstyle={},pdfborderstyle={},urlcolor=blue,linkcolor=blue,citecolor=blue}

\makeatletter

%%%%%%%%%%%%%%%%%%%%%%%%%%%%%% LyX specific LaTeX commands.
%% Because html converters don't know tabularnewline

\@ifundefined{date}{}{\date{}}
%%%%%%%%%%%%%%%%%%%%%%%%%%%%%% User specified LaTeX commands.
%\oddsidemargin=0in
%\evensidemargin=0in
%\topmargin=-30pt
%\textwidth=6.2in \textheight=8.3in
%%\usepackage[notcite]{showkeys}

%\date{\today}

\usepackage{amsfonts}
\usepackage{dsfont}
%\usepackage{shuffle}

%%%%%%%%%%%%%%%%%%%%%%%%%%%%%%%%%%%%%%%%%%%%%

\setcounter{MaxMatrixCols}{10}
%TCIDATA{OutputFilter=Latex.dll}
%TCIDATA{Version=5.50.0.2960}
%TCIDATA{<META NAME="SaveForMode" CONTENT="1">}
%TCIDATA{BibliographyScheme=Manual}
%TCIDATA{Created=Tue Jun 04 20:59:26 2002}
%TCIDATA{LastRevised=Tuesday, February 22, 2011 12:31:35}
%TCIDATA{<META NAME="GraphicsSave" CONTENT="32">}
%TCIDATA{Language=American English}

\oddsidemargin=0in
\evensidemargin=0in
\topmargin=-40pt
\textwidth=6.5in
\textheight=8.5in
\numberwithin{equation}{section}

\allowdisplaybreaks[4]
\DeclareMathAlphabet{\mathcal}{OMS}{cmsy}{m}{n}

%%\input{tcilatex}

% Define a shorthand command

\newtheorem{thm}{Theorem}[section]
\newtheorem{lem}[thm]{Lemma}

\newtheorem{prop}[thm]{Proposition}
\newtheorem{cor}[thm]{Corollary}

\newtheorem{rem}[thm]{Remark}

\@addtoreset{equation}{section}

\makeatother

\usepackage{authblk}
\usepackage{tikz}

\begin{document}

\title{L\'evy Area Analysis and Parameter Estimation for fOU Processes via Non-Geometric Rough Path Theory\footnote{Published in the journal: \textit{Acta Mathematica Scientia, 2024}.}}

% \author{Zhongmin Qian\thanks{Mathematical Institute, University of Oxford, OX2 6GG, United Kingdom.
% Email: qianz@maths.ox.ac.uk} \ \ \ \ Xingcheng Xu\thanks{Shanghai Artificial Intelligence Laboratory, Shanghai, 200232, China; School of Mathematical Sciences, Peking University, Beijing, 100871,
% China. Corresponding author. Email: xingcheng.xu18@gmail.com}}

\author[1]{Zhongmin Qian\thanks{qianz@maths.ox.ac.uk}}
\author[2,3]{Xingcheng Xu\thanks{xingcheng.xu18@gmail.com}}
\affil[1]{Mathematical Institute, University of Oxford}
\affil[2]{Shanghai Artificial Intelligence Laboratory}
\affil[3]{School of Mathematical Sciences, Peking University}

\date{}

\maketitle

\begin{abstract}
This paper addresses the estimation problem of an unknown drift parameter matrix for a fractional Ornstein-Uhlenbeck process in a multi-dimensional setting. To tackle this problem, we propose a novel approach based on rough path theory that allows us to construct pathwise rough path estimators from both continuous and discrete observations of a single path. Our approach is particularly suitable for high-frequency data. To formulate the parameter estimators, we introduce a theory of pathwise It\^o integrals with respect to fractional Brownian motion. By establishing the regularity of fractional Ornstein-Uhlenbeck processes and analyzing the long-term behavior of the associated L\'evy area processes, we demonstrate that our estimators are strongly consistent and pathwise stable. Our findings offer a new perspective on estimating the drift parameter matrix for fractional Ornstein-Uhlenbeck processes in multi-dimensional settings, and may have practical implications for fields including finance, economics, and engineering.
\end{abstract}
\vspace{2mm}
\hspace{11mm}{\bf Keywords}. It\^o integration, L\'evy area, Non-geometric rough path, FOU processes, 

\hspace{5mm}Pathwise stability, Long time asymptotic, High-frequency data.
\vspace{2mm}

\hspace{5mm}{\bf MSC{[}2010{]}}: 60H05, 62F12, 62M09, 91G30.

\section{Introduction}

\label{intro-sec}

The field of statistical analysis of time series and random processes involves parameter and non-parameter estimations, and  statistical inferences as well. The majority of research in this area has focused on models described in terms of diffusion processes and semi-martingales. Standard references, such as \cite{JS03,LS77,Rao99-a,Rao99}, are among those that have concentrated on these models. However, applications that require the consideration of long-time memory effects have brought some attention to models that are not semi-martingales, such as those discussed in \cite{HN10,KLB02,TV07}.

This article focuses on multi-dimensional Ornstein-Uhlenbeck (OU) processes driven by fractional Brownian motions (fBM). These processes are commonly referred to as fractional Ornstein-Uhlenbeck (fOU) processes and are defined as the solution to the stochastic differential equation (SDE)
\begin{equation}
dX_{t}=-\Gamma X_{t}dt+\Sigma dB_{t}^{H},\ X_{0}=x_{0}.\label{fOU-h}
\end{equation}
Here, $B^{H}$ is a $d$-dimensional fBM with a Hurst parameter $H\in(0,1)$, $\Gamma\in\mathbb{R}^{d\times d}$ is the drift matrix which is symmetric and positive-definite, and $\Sigma\in\mathbb{R}^{d\times d}$ is the non-degenerate volatility matrix. The SDE must be interpreted as the stochastic integral equation
\[
X_{t}=x_{0}-\int_{0}^{t}\Gamma X_{s}ds+\Sigma B_{t}^{H},
\]
which has a unique solution given by

\begin{equation}
X_{t}=e^{-\Gamma t}x_{0}+\int_{0}^{t}e^{-\Gamma(t-s)}\Sigma dB_{s}^{H}.
\end{equation}
The integral on the right-hand side is a Young's integral. Therefore, like ordinary OU processes, $(X_{t})$ is a Gaussian process.

The multi-dimensional fOU processes can be used to describe systems with linear interactions perturbed by Gaussian noise. Inter-bank lending is a real-world example that can be modeled using Equation (\ref{fOU-h}), as demonstrated in \cite{CFS15,FI13}. A crucial question in such applications is to estimate the interaction structure $\Gamma$ from a single path observation of the process, assuming that $\Sigma$ is known and the single path $X(\omega)$ can be continuously or discretely observed.

In the one-dimensional case, maximum likelihood estimators (MLE) and least square estimators (LSE) have been studied extensively, and their properties have been documented in literature \cite{HN10,HNZ17,KLB02,TV07}. The MLE based on continuous observation has been studied by Kleptsyna and Le Breton \cite{KLB02} and Tudor and Viens \cite{TV07}, who obtained the strong consistency of the MLE as $T$ goes to infinity. Hu and Nualart \cite{HN10} investigated the LSE for the case where the Hurst parameter is $H>\frac{1}{2}$, and proved its strong consistency as $T\to\infty$ for $H>\frac{1}{2}$. They also established a central limit theorem for $\frac{1}{2}<H<\frac{3}{4}$. The results were extended by Hu, Nualart and Zhou \cite{HNZ17} for all $H\in(0,1)$.

However, few research has been conducted on parameter estimation for multi-dimensional fOU processes. This paper aims to fill this gap. First, we present an estimator based on the rough path theory for continuous observation of a single path. To formulate the parameter estimator, we define an It\^o type integration theory for multi-dimensional fBM.

Coutin and Qian \cite{CQ02} developed a theory of Stratonovich integration for multi-dimensional fBM with Hurst parameter $H>\frac{1}{4}$ using rough path analysis. However, the problem of constructing a rough path theory for fBM with $H\leq\frac{1}{4}$ remains unsolved. Here, we focus on fBM with $H$ such that $\frac{1}{3}<H\leq\frac{1}{2}$, where both fBM and fOU processes have finite $p$-variation with $2\leq p<3$. We canonically enhance these processes to geometric rough paths, which allows us to define It\^o-type integrals with respect to fBM and fOU processes by correcting their enhanced L\'evy area processes. We then apply this theory to investigate the parameter estimation problem for fOU processes based on continuous observation. To establish the strong consistency of the parameter estimator, we also explore the regularity of fOU processes and the long-term asymptotic behavior of their L\'evy area processes, which we believe are interesting in their own right.

We also address the parameter estimation problem for fOU processes based on discrete observation. In practice, observations are often discrete rather than continuous, even though the sampling frequency can be increased, as in the case of high-frequency financial data. To tackle this statistical inference problem, we recommend \cite{Ait02,AJ09,AJ10,AJ14,AMZ11,CG11,MZ09} and related references. In this paper, we construct a parameter estimator based on high-frequency discrete observation using rough path theory and establish its strong consistency. It is worth noting that Diehl, Friz, and Mai \cite{DFM16} used rough path analysis to study maximum likelihood estimators for diffusion processes and initiated research on estimators for the fractional case, but only for small $\varepsilon$ when $H=\frac{1}{2}-\varepsilon$.

The methodology proposed in this paper offers several advantages over existing methods. First, our estimators are applicable to multi-dimensional fOU processes, which reveal the non-trivial role played by L\'evy area processes, and are fundamentally different from the one-dimensional case. Second, the parameter estimators are pathwise defined and can be computed based on observations of a single path. Third, our parameter estimators exhibit pathwise stability and robustness, in the sense that if two observations are close in the so-called $p$-variation distance (as defined in the main text below), then their corresponding estimators are also close. Fourth, our estimators can be constructed using both continuous and discrete observational data, and are particularly useful for high-frequency financial data.

% We provide numerical studies and simulations to demonstrate the performance of the proposed parameter estimators in this paper.
We note that our approach can be extended to the Ornstein-Uhlenbeck process $X_{t}$ driven by a general Gaussian noise $G_{t}$ satisfying certain technical conditions, where
\begin{equation}
X_{t}=e^{-\Gamma t}x_{0}+\int_{0}^{t}e^{-\Gamma(t-s)}\Sigma dG_{s}.
\end{equation}
The integral on the right-hand side is well-defined as long as $t\rightarrow G_{t}$ is $\alpha$-H\"older continuous for some $\alpha>0$. A work that can be referred to in this direction is Chen and Zhou \cite{CZ21}. These singular OU processes may have practical applications.

The structure of this paper is organized as follows. In Section \ref{sec-RP}, we introduce some preliminary concepts regarding rough path theory and present a framework for pathwise It\^o integrals for both fBM and fOU processes. In Section \ref{sec-longtime}, we explore the regularity of fOU processes and examine the long time behavior of their associated L\'evy area processes. Then, in Section \ref{sec-CRPE}, we construct a continuous rough path estimator and present a complete proof for its almost sure convergence and pathwise stability. In Section \ref{sec-DRPE}, we present the discrete rough path estimator that is based on high-frequency data. % Lastly, in Section \ref{sec-simulation}, we provide two concrete experimental examples based on simulated sample paths, and present the numerical results obtained from these experiments.

\section{Rough paths and It\^o integration}

\label{sec-RP}

In this section, we present the notations used in the rough path theory, following established references such as \cite{FH14,FV10,Gub04,LCL,LQ02}. We provide a precise definition of It\^o integrals for both fBM and fOU processes.

\subsection{Preliminary of rough paths}

% Define the truncated tensor algebra $T^{(2)}(\mathbb{R}^{d})$ by
% $T^{(2)}(\mathbb{R}^{d}):=\oplus_{n=0}^{2}(\mathbb{R}^{d})^{\otimes n},$
% with the convention that $(\mathbb{R}^{d})^{\otimes0}=\mathbb{R}$,
% and use $\Delta$ to denote the simplex $\{(s,t):0\leq s<t\leq T\}$.
% Let $X_{t}$ be a continuous path of finite $p$-variation with $2<p<3$ on interval $[0,T]$ and $\mathbf{X}_{s,t}=(1,X_{s,t},\mathbb{X}_{s,t})$
% be an element of space $T^{(2)}(\mathbb{R}^{d})$. Here, $X_{s,t}=X_{t}-X_{s}\in \mathbb{R}^{d}$, and $\mathbb{X}_{s,t}\in \mathbb{R}^{d}\otimes \mathbb{R}^{d}$. If the process $X_t$ is  of finite variation, $\mathbb{X}_{s,t}=\int_{s<t_1<t_2<t}dX_{t_1}\otimes dX_{t_2}$. $\mathbf{X}_{s,t}$ would be called a lift of the process $X$ to the space $T^{(2)}(\mathbb{R}^{d})$ if finite $p$-variation and Chen's identity are satisfied.

% The initial motivation is to define integrals with respect to $X$ by
% increasing information to $X$. We recall Chen's
% identity (\textit{algebraic information}) and the definition of finite $p$-variation
% (\textit{analysis information}).

We define the truncated tensor algebra $T^{(2)}(\mathbb{R}^{d})$ as follows:
$T^{(2)}(\mathbb{R}^{d}):=\oplus_{n=0}^{2}(\mathbb{R}^{d})^{\otimes n}.$
Here we adopt the convention that $(\mathbb{R}^{d})^{\otimes 0}=\mathbb{R}$. In this context, we use $\Delta$ to represent the simplex given by $\{(s,t):0\leq s<t\leq T\}$.
Let $X_t$ be a continuous path with finite $p$-variation, where $2<p<3$, defined on the interval $[0,T]$. We denote $\mathbf{X}_{s,t}=(1,X_{s,t},\mathbb{X}_{s,t})$ as an element of the space $T^{(2)}(\mathbb{R}^{d})$, where $X_{s,t}=X_t-X_s\in\mathbb{R}^{d}$ and $\mathbb{X}_{s,t}\in\mathbb{R}^{d}\otimes\mathbb{R}^{d}$. To further illustrate the concept, if the process $X_t$ is of finite variation, then $\mathbb{X}_{s,t}=\int_{s<t_1<t_2<t}dX_{t_1}\otimes dX_{t_2}$, where the integral represents the tensor product of the two differentials. We refer to $\mathbf{X}_{s,t}$ as a lift of the process $X$ to the space $T^{(2)}(\mathbb{R}^{d})$ if it satisfies both finite $p$-variation and Chen's identity.

The initial motivation behind this concept is to define integrals with respect to $X$ by increasing the information on $X$. We recall Chen's identity, which relates to algebraic information, and the definition of finite $p$-variation, which relates to analysis information.

We recall that $\mathbf{X}_{s,t}=(1,X_{s,t},\mathbb{X}_{s,t})$ satisfies
\textit{Chen's identity} if
\begin{equation}
X_{s,t}=X_{t}-X_{s},\label{Chen-r1}
\end{equation}
\begin{equation}
\mathbb{X}_{s,t}-\mathbb{X}_{s,u}-\mathbb{X}_{u,t}=X_{s,u}\otimes X_{u,t},\label{Chen-r2}
\end{equation}
for all $(s,u),\ (u,t)\in\Delta$.

$\mathbf{X}=(1,X_{s,t},\mathbb{X}_{s,t})$ has \textit{finite $p$-variations}
if
\[
\sup_{\mathcal{P}}\sum_{[s,t]\in\mathcal{P}}|X_{s,t}|^{p}<\infty,\ \sup_{\mathcal{P}}\sum_{[s,t]\in\mathcal{P}}|\mathbb{X}_{s,t}|^{p/2}<\infty,
\]
where $\mathcal{P}$ is a partition of $[0,T]$. This is equivalent
to the fact that there exists a control $\omega(s,t)$ such that
\[
|X_{s,t}|\leq\omega(s,t)^{1/p},\ |\mathbb{X}_{s,t}|\leq\omega(s,t)^{2/p},\ \forall(s,t)\in\Delta.
\]
A control $\omega$ is a non-negative, continuous, super-additive
function on $\Delta$ and satisfies that $\omega(t,t)=0$.

Let $2<p<3$ be a constant. A function $\mathbf{X}=(1,X,\mathbb{X})$
from $\Delta$ to $T^{(2)}(\mathbb{R}^{d})$ is called a $p$-\textit{rough
path} if it has finite $p$-variation and satisfies Chen's identity.
We denote the space of $p$-rough paths as $\Omega_{p}(\mathbb{R}^{d})$.

According to Lyons and Qian \cite{LQ02}, the integration operator is
defined as a linear map from $\Omega_{p}(\mathbb{R}^{d})$ to $\Omega_{p}(\mathbb{R}^{e})$,
i.e. $\int F:\Omega_{p}(\mathbb{R}^{d})\to\Omega_{p}(\mathbb{R}^{e})$,
and denote the integral by $Y=\int F(X)d_{\mathfrak{R}}\mathbf{X}$,
where
\begin{equation}
Y_{u,v}^{1}\equiv\int_{u}^{v}F(X)d_{\mathfrak{R}_{1}}\mathbf{X}:=\lim_{|\mathcal{P}|\to0}\sum_{[s,t]\in\mathcal{P}}F(X_{s})X_{s,t}+DF(X_{s})\mathbb{X}_{s,t},\label{level-1}
\end{equation}
and the second level $Y^{2}$ is defined by
\begin{equation}
Y_{u,v}^{2}\equiv\int_{u}^{v}F(X)d_{\mathfrak{R}_{2}}\mathbf{X}:=\lim_{|\mathcal{P}|\to0}\sum_{[s,t]\in\mathcal{P}}Y_{u,s}^{1}\otimes Y_{s,t}^{1}+F(X_{s})\otimes F(X_{s})\mathbb{X}_{s,t},\label{level-2}
\end{equation}
where the limit takes over all finite partitions  $\mathcal{P}$ of interval $[u,v]$.

\subsection{FBM as rough paths}

Almost all sample paths of a $d$-dimensional fractional Brownian motion (fBM) with Hurst parameter $H\in(\frac{1}{3},\frac{1}{2}]$ possess finite $p$-variation with $2<\frac{1}{H}<p<3$, and can be canonically enhanced to geometric rough paths. Coutin and Qian \cite{CQ02} constructed the canonical rough path enhancement $\mathbf{B}^{H,\text{Str}}=(1,B^{H},\mathbb{B}^{H,\text{Str}})$ in the Stratonovich sense using dyadic approximations of fBM and their iterated integrals. However, for the parameter estimation problem discussed in this paper, understanding the stochastic integral in the estimator (see Section \ref{sec-CRPE}) in the Stratonovich sense would almost surely result in convergence to 0, rendering the estimator unreasonable and useless. Therefore, we require a theory of It\^o-type integration (non-geometric rough path) for both fBM and fOU processes. In \cite{QX}, Qian and Xu constructed a
non-geometric rough path enhancement $\tilde{\mathbf{B}}^{H}=(1,B^{H},\tilde{\mathbb{B}}^{H})$
associated with an fBM by setting that 
\[
\tilde{\mathbb{B}}_{s,t}^{H}=\mathbb{B}_{s,t}^{H,\text{Str}}-\frac{1}{2}I(t^{2H}-s^{2H}),
\]
where $I$ denotes the $d\times d$ identity matrix. This construction of $\tilde{\mathbb{B}}_{s,t}^{H}$ allows for the definition of pathwise integrals with respect to the enhanced rough path $\tilde{\mathbf{B}}^{H}$. For the first level of this integral,
\[
\int F(B^{H})d_{\mathfrak{R}_{1}}\tilde{\mathbf{B}}^{H}(\omega)=\lim_{|\mathcal{P}|\to0}\sum_{[s,t]\in\mathcal{P}}F(B_{s}^{H}(\omega))B_{s,t}^{H}(\omega)+DF(B_{s}^{H}(\omega))\tilde{\mathbb{B}}_{s,t}^{H}(\omega)
\]
for every $\omega\in N^{c}$, where $N$ is a null set. The second
level is defined similarly as to (\ref{level-2}).

The theory of above rough path enhancement and associated It\^o integration is limited to one forms, meaning that it only works well for functions of $B_{t}^{H}$. Thus, this theory is not suitable for dealing with fOU processes, which depend on the whole path of ${B_{s}^{H},\ 0\leq s\leq t}$. In this paper, we reveal that a different integration theory is needed: one with different rough paths associated with fBM.

To define a non-geometric It\^o rough path enhancement associated with fBM suitable for the study of fOU processes, we take that $\varphi(t):=\frac{1}{2}It^{2H}-U(t)$, where
\begin{equation}
U(t):=H\Gamma\int_{0}^{t}\int_{0}^{s}e^{-\Gamma(s-u)}(s^{2H-1}-(s-u)^{2H-1})duds.
\end{equation}
This function has finite $q$-variation with $q=\frac{1}{2H}$, allowing us to define the  non-geometric It\^o type fractional Brownian rough path lift for $B^{H}$ as
\begin{equation}
\mathbf{B}_{s,t}^{H,\text{It\^o}}=(1,B_{s,t}^{H},\mathbb{B}_{s,t}^{H,\text{It\^o}}):=(1,B_{s,t}^{H},\mathbb{B}_{s,t}^{H,\text{Str}}-\varphi_{s,t}),
\end{equation}
where $\varphi_{s,t}=\varphi(t)-\varphi(s)$.

\begin{rem}\label{remark-ito} One can verify that, if $H=\frac{1}{2}$, this It\^o rough path enhancement is consistent with It\^o theory for the standard Brownian motion. When $\Gamma=0$, this enhancement is the same as the one form case defined in Qian and Xu \cite{QX}. In what follows, we will illustrate why we call
this It\^o rough path/It\^o rough integration. \end{rem}

\subsection{FOU as rough paths}

The fOU process $X_{t}$ defined by stochastic differential equation
(\ref{fOU-h}) can also be enhanced as a rough path according
to the theory of rough path, which is the essence of the theory of
rough differential equations. For the existence and uniqueness
of the solution to (\ref{fOU-h}), for this simple case through the theory of
rough path is not needed. However, when we ask if $X_{t}$ can be
enhanced to a rough path, or when we want to integrate $F(X)$ with
respect to $X$, the rough path analysis is a natural tool to deal
with these problems.

We emphasize that the meaning of the solution $\mathbf{X}$ to a rough differential
equation enhanced by (\ref{fOU-h}) depends on the rough
paths $\mathbf{B}^{H}$ we use. Here $\mathbf{B}^{H}$ can be either
$\mathbf{B}^{H,\text{Str}}$ (in Stratonovich sense) or $\mathbf{B}^{H,\text{It\^o}}$
(in It\^o sense).

Let $Z_{t}=(B_{t}^{H},X_{t},t)$ and $\mathbf{Z}=(\mathbf{B}^{H},\mathbf{X},\mathbf{t})$ be its associated rough path enhancement. Then equation (\ref{fOU-h})
is enhanced to
\begin{equation}
d_{\mathfrak{R}}\mathbf{Z}=f(Z)d_{\mathfrak{R}}\mathbf{Z},\label{RDE}
\end{equation}
where $f(x,y,t)(\xi,\eta,\tau):=(\xi,-\Gamma y\tau+\Sigma\xi,\tau)$.
According to Theorem 6.2.1 and Corollary 6.2.2 in \cite{LQ02}, a
unique solution $\mathbf{Z}$, which is a rough path, exists. Formally,
$\mathbf{Z}=(1,Z,\mathbb{Z})$ has the following expression:
\begin{align}
Z_{s,t} & =(B_{s,t}^{H},X_{s,t},t-s),\\
\mathbb{Z}_{s,t} & =\begin{pmatrix}\mathbb{B}_{s,t}^{H} & \int_{s}^{t}B_{s,u}^{H}dX_{u} & \int_{s}^{t}B_{s,u}^{H}du\\
\int_{s}^{t}X_{s,u}dB_{u}^{H} & \mathbb{X}_{s,t} & \int_{s}^{t}X_{s,u}du\\
\int_{s}^{t}(u-s)dB_{u}^{H} & \int_{s}^{t}(u-s)dX_{u} & \frac{1}{2}(t-s)^{2}
\end{pmatrix}.\label{Z2-RDE}
\end{align}
Each component of the second level $\mathbb{Z}_{s,t}$ is well-defined
as parts of the solution to (\ref{RDE}). More exactly, we denote
Stratonovich solution of RDE (\ref{RDE}) as $\mathbf{Z}^{\mathrm{Str}}=(1,Z,\mathbb{Z}^{\mathrm{Str}})$,
where $\mathbb{Z}^{\mathrm{Str}}=(\mathbb{Z}^{\mathrm{Str},ij})_{i,j=1,2,3}$,
and we denote that It\^o solution of RDE (\ref{RDE}) as $\mathbf{Z}^{\text{It\^o}}=(1,Z,\mathbb{Z}^{\text{It\^o}})$,
where $\mathbb{Z}^{\text{It\^o}}=(\mathbb{Z}^{\text{It\^o},ij})_{i,j=1,2,3}$.

We may therefore define that Stratonovich integral (first
level) of fOU process with respect to fBM as
\begin{equation}
\int_{0}^{t}X_{s}\circ d_{\mathfrak{R}_{1}}\mathbf{B}^{H,\mathrm{Str}}=\mathbb{Z}_{0,t}^{\mathrm{Str},21}+X_{0}B_{0,t}^{H},
\end{equation}
and It\^o integral (first level) of fOU process with respect to fBM
as
\begin{equation}
\int_{0}^{t}X_{s}d_{\mathfrak{R}_{1}}\mathbf{B}^{H,\text{It\^o}}=\mathbb{Z}_{0,t}^{\text{It\^o},21}+X_{0}B_{0,t}^{H}.
\end{equation}

Now we can define stochastic integrals with respect to the fOU rough path
enhancement $\mathbf{X}$ by Equations (\ref{level-1}) and (\ref{level-2}).
Note that these integrals are pathwise defined and continuous with
respect to the sample path $\mathbf{X}(\omega)$ in the $p$-variation
metric. In what follows, we denote the Stratonovich rough integral as
\[
\int_{0}^{t}F(X_{s})\circ d_{\mathfrak{R}}\mathbf{X}=\left(1,\int_{0}^{t}F(X_{s})\circ d_{\mathfrak{R}_{1}}\mathbf{X},\int_{0}^{t}F(X_{s})\circ d_{\mathfrak{R}_{2}}\mathbf{X}\right),
\]
and the It\^o rough integral as
\[
\int_{0}^{t}F(X_{s})d_{\mathfrak{R}}\mathbf{X}=\left(1,\int_{0}^{t}F(X_{s})d_{\mathfrak{R}_{1}}\mathbf{X},\int_{0}^{t}F(X_{s})d_{\mathfrak{R}_{2}}\mathbf{X}\right).
\]
As an application, we will use the It\^o rough integrals to construct
the estimator for parametric matrix $\Gamma$, and prove the asymptotic
properties and pathwise stability in the following sections.

\subsection{Zero expectation}

Let us illustrate the reason for the naming of It\^o rough paths and It\^o rough
integrals. In stochastic analysis, It\^o integrals can be defined in
terms of the martingale property, which is suitable for semi-martingales.
For processes which are not semi-martingales such as fBM, attempts making integrals with respect to fBM being martingales are of course
hopeless. We instead demand that the expectations of integrals with
respect to fBM are constant (e.g. to be zero). We call this kind
of integrals as an It\^o type integral, which is in fact an extension
of classical It\^o integration theory.

Now let us verify that expectation of the It\^o integral of fOU process
with respect to fBM $\int_{0}^{t}X_{s}d_{\mathfrak{R}_{1}}\mathbf{B}^{H,\text{It\^o}}$
(or write as $\int_{0}^{t}X_{s}\otimes d_{\mathfrak{R}_{1}}\mathbf{B}^{H,\text{It\^o}}$)
vanishes.

According to the theory of differential equations driven by rough
paths and the definition of integrals above, and assuming that coefficient
matrices $\Gamma$ and $\Sigma$ are commutative for simplicity, we
have that
\begin{equation}
\int_{0}^{t}X_{s}d_{\mathfrak{R}_{1}}\mathbf{B}^{H,\text{It\^o}}=\int_{0}^{t}X_{s}\circ d_{\mathfrak{R}_{1}}\mathbf{B}^{H,\mathrm{Str}}-\Sigma\varphi(t).
\end{equation}
since
\begin{equation}
X_{t}=e^{-\Gamma t}X_{0}+\int_{0}^{t}e^{-\Gamma(t-s)}\Sigma dB_{s}^{H},
\end{equation}
where $X_{0}$ is a constant vector and the integral on the right
hand side is Young's integral and equals $\int_{0}^{t}e^{-\Gamma(t-s)}\Sigma\circ d_{\mathfrak{R}_{1}}\mathbf{B}_{s}^{H,\mathrm{Str}}$.
Therefore
\[
\begin{split}\mathbb{E}\left(\int_{0}^{t}X_{s}\circ d_{\mathfrak{R}_{1}}\mathbf{B}^{H,\mathrm{Str}}\right) & =\mathbb{E}\left(\int_{0}^{t}e^{-\Gamma s}X_{0}\circ d_{\mathfrak{R}_{1}}\mathbf{B}^{H,\mathrm{Str}}\right)\\
 & +\mathbb{E}\left(\int_{0}^{t}\int_{0}^{s}e^{-\Gamma(s-u)}\Sigma\circ d_{\mathfrak{R}_{1}}\mathbf{B}_{u}^{H,\mathrm{Str}}\circ d_{\mathfrak{R}_{1}}\mathbf{B}_{s}^{H,\mathrm{Str}}\right).
\end{split}
\]
The first term on the right hand side is zero, and the second term is
\[
\begin{split} & \mathbb{E}\left(\int_{0}^{t}\int_{0}^{s}e^{-\Gamma(s-u)}\Sigma\circ d_{\mathfrak{R}_{1}}\mathbf{B}_{u}^{H,\mathrm{Str}}\circ d_{\mathfrak{R}_{1}}\mathbf{B}_{s}^{H,\mathrm{Str}}\right)=\int_{0}^{t}\int_{0}^{s}e^{-\Gamma(s-u)}\Sigma dR_{H}(u,s)\\
 & \ \ \ \ =\Sigma\left(\frac{1}{2}It^{2H}-H\Gamma\int_{0}^{t}\int_{0}^{s}e^{-\Gamma(s-u)}(s^{2H-1}-(s-u)^{2H-1})duds\right)=\Sigma\varphi(t),
\end{split}
\]
where $R_{H}(u,s)=\mathbb{E}(B_{u}^{H}B_{s}^{H})=\frac{1}{2}(u^{2H}+s^{2H}-|u-s|^{2H})$
is the covariance function of fBM and the integral against $R_{H}(u,s)$
is defined as a Young's integral in the 2D sense (see e.g. \cite{FV10}).
Thus, combining equations above, we have prove the zero expectation
property, i.e.,
\begin{equation}
\mathbb{E}\left(\int_{0}^{t}X_{s}d_{\mathfrak{R}_{1}}\mathbf{B}^{H,\text{It\^o}}\right)=0.
\end{equation}

\section{Long time asymptotic of L\'evy area of fOU processes}

\label{sec-longtime}

In this section, we examine the properties of fOU processes. We establish the $\alpha$-H\"older continuity of fOU processes and prove a long-term asymptotic property of the L\'evy area of fOU processes.

\subsection{Regularity of fOU processes}

\subsubsection{The covariance of fOU processes}

The covariance function of a general fOU process can be worked out
explicitly. For simplicity, we first study a stationary version of the fOU process in this section. Consider
\[
X_{t}=\sigma\int_{-\infty}^{t}e^{-\lambda(t-s)}dB_{s}^{H},
\]
which is stationary and ergodic (see e.g. \cite{CKM03}), and that $B^{H}$
is fBM with Hurst parameter $H<\frac{1}{2}$. It is well known that
the covariance $R_{H}(\cdot,\cdot)$ of $B^{H}$ is of finite $\frac{1}{2H}$-variation.
%controlled by some $2D$ control $\omega$, which can be taken as $$\omega(s,t):=|R_H|^{\frac{1}{2H}}_{\frac{1}{2H}-var;[s,t]^2}.$$

The covariance function of $\{X_{t}=\sigma\int_{-\infty}^{t}e^{-\lambda(t-s)}dB_{s}^{H},t\geq0\}$
is given by (see, e.g. \cite{PT00})
\[
\begin{split}r(t) & =\mathrm{Cov}(X_{s},X_{s+t})=\mathrm{Cov}(X_{0},X_{t})\\
 & =\frac{\sigma^{2}}{\lambda^{2H}}\frac{G(2H+1)\sin(\pi H)}{\pi}\int_{0}^{\infty}\cos(\lambda tx)\frac{x^{1-2H}}{1+x^{2}}dx\\
 & =\frac{\sigma^{2}}{2\lambda^{2H}}G(2H+1)\cosh(\lambda t)-\frac{\sigma^{2}}{2}t^{2H}{}_{1}F_{2}(1;H+\frac{1}{2},H+1;\frac{1}{4}\lambda^{2}t^{2}),
\end{split}
\]
where $G(\cdot)$ is the Gamma function, $\cosh(\cdot)$ the hyperbolic
cosine function, $_{1}F_{2}(\cdot;\cdot,\cdot;\cdot)$ the generalized
hypergeometric function, i.e.,
\[
_{p}F_{q}(a_{1},\cdots,a_{p};b_{1},\cdots,b_{q};x)=\sum_{n=0}^{\infty}\frac{(a_{1})_{n}\cdots(a_{p})_{n}}{(b_{1})_{n}\cdots(b_{q})_{n}}\frac{x^{n}}{n!},
\]
and $(a)_{0}=1,(a)_{n}=a(a+1)\cdots(a+n-1)$, for $n\geq1$. One can
see the figure of this covariance function $r(\cdot)$ and its first
and second derivatives below, where we take $H=0.2$, $\sigma=\lambda=1$
as an example.

\begin{figure}%[ht]
\begin{centering}
\includegraphics[width=12cm,height=8cm]{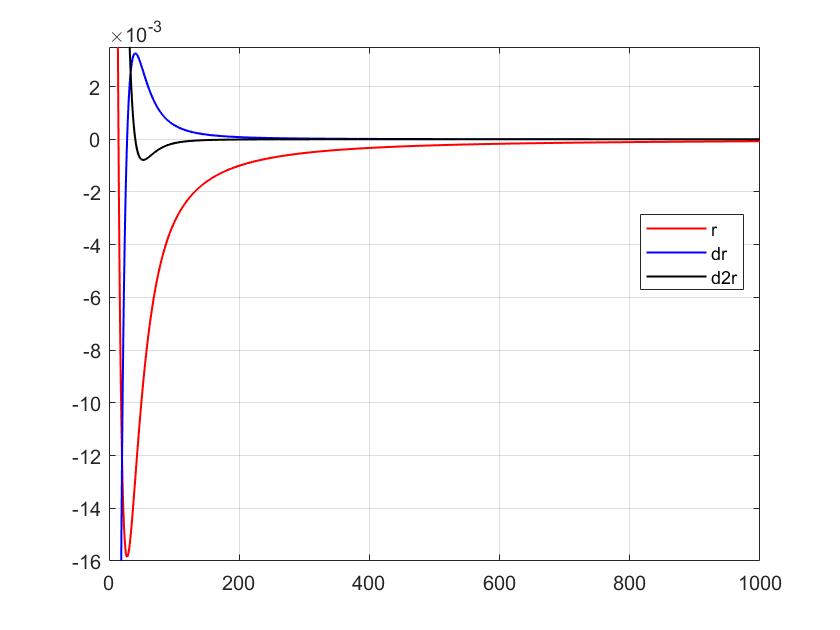}
\par\end{centering}
\caption{Graph of the covariance function $r(\cdot)$ of stationary fOU and
its first two derivatives, e.g. $H=0.2$, $\sigma=\lambda=1$.}
\label{fig-1}
\end{figure}

\begin{lem}\label{corr-lemma-r} For the covariance function $r(\cdot)$
of the stationary fOU process $X$ with $H<\frac{1}{2}$, we have the
following properties:\\
 (i) $r(t)$ is $2H$-H\"{o}lder continuous on $\mathbb{R}_{+}$, that
is,
\[
|r(t)-r(s)|\leq C_{H}|t-s|^{2H}
\]
for any $s,t\in\mathbb{R}_{+}$ and $C_{H}$ depends on $H,\sigma,\lambda$
only (we may ignore $\sigma,\lambda$).\\
 (ii) There exist constants $0<T_{0}<T_{1}$ such that $r''(T_{0})=0$, $r''(t)>0$ on interval $(0,T_{0})$, and $r''(t)<0$ on interval
$(T_{1},\infty)$. That is, $r$ is convex on $[0,T_{0}]$ and concave
on $(T_{1},\infty)$. \end{lem}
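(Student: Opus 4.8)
The plan is to prove the two parts from two different closed descriptions of $r$, using each where it is transparent: the oscillatory integral for part (i), and the hypergeometric form for part (ii) (the exponential cancellation hidden in $\cosh(\lambda t)$ minus the hypergeometric term obstructs part (i), while the oscillation obstructs a clean expression for $r''$). For part (i), starting from
\[
r(t)-r(s)=C_{0}\int_{0}^{\infty}\frac{x^{1-2H}}{1+x^{2}}\bigl(\cos(\lambda tx)-\cos(\lambda sx)\bigr)\,dx,\qquad C_{0}:=\frac{\sigma^{2}}{\lambda^{2H}}\cdot\frac{G(2H+1)\sin(\pi H)}{\pi}>0,
\]
I would use $|\cos(\lambda tx)-\cos(\lambda sx)|\le\min\{2,\lambda|t-s|x\}$ and split the integral at $x_{0}=1/(\lambda|t-s|)$. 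On $(0,x_{0})$ the integrand is at most $\lambda|t-s|\,x\cdot\frac{x^{1-2H}}{x^{2}}=\lambda|t-s|\,x^{-2H}$, which (as $2H<1$) integrates to $(\lambda|t-s|)^{2H}/(1-2H)$; on $(x_{0},\infty)$ it is at most $2\,\frac{x^{1-2H}}{x^{2}}=2x^{-1-2H}$, integrating to $(\lambda|t-s|)^{2H}/H$. Adding gives $|r(t)-r(s)|\le C_{0}\bigl(\tfrac{1}{1-2H}+\tfrac{1}{H}\bigr)(\lambda|t-s|)^{2H}=:C_{H}|t-s|^{2H}$, uniformly over $s,t\in\mathbb{R}_{+}$ and with no case distinction.

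For part (ii) the first step is to extract a governing identity from the hypergeometric formula. Writing $r(t)=\frac{\sigma^{2}G(2H+1)}{2\lambda^{2H}}\cosh(\lambda t)-\frac{\sigma^{2}}{2}w(t)$ with
\[
w(t)=t^{2H}\,{}_{1}F_{2}\bigl(1;H+\tfrac{1}{2},H+1;\tfrac{\lambda^{2}t^{2}}{4}\bigr)=\sum_{n\ge0}\frac{(\lambda^{2}/4)^{n}}{(H+\frac{1}{2})_{n}(H+1)_{n}}\,t^{2n+2H},
\]
I would differentiate this series term by term on $(0,\infty)$ (legitimate, since the differentiated series converges uniformly on compacts away from $0$); a short computation with Pochhammer symbols shows the coefficient of $t^{2n+2H}$ in $w''-\lambda^{2}w$ vanishes for every $n\ge0$, so only the lowest-order term of $w''$ survives and $w''(t)-\lambda^{2}w(t)=2H(2H-1)t^{2H-2}$. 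Since $\cosh(\lambda t)$ contributes nothing to $r''-\lambda^{2}r$, this gives, for $t>0$,
\[
r''(t)=\lambda^{2}r(t)+\kappa\,t^{2H-2},\qquad\kappa:=\sigma^{2}H(1-2H)>0,
\]
the positivity of $\kappa$ being where $H<\tfrac{1}{2}$ enters. As $r$ is $C^{\infty}$ on $(0,\infty)$ this is a genuine pointwise identity there; and since $r$ is bounded (e.g.\ $|r(t)|\le r(0)$ by stationarity) while $t^{2H-2}\to+\infty$ as $t\to0^{+}$, it yields $r''(t)\to+\infty$ as $t\to0^{+}$, hence $r''>0$ on some interval $(0,\delta)$.

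For the behaviour as $t\to\infty$ I would read the identity as the linear ODE $r''-\lambda^{2}r=\kappa t^{2H-2}$ on $[1,\infty)$; boundedness of $r$ kills the growing mode, so $r(t)=\beta e^{-\lambda t}+r_{p}(t)$ for a constant $\beta$, where
\[
r_{p}(t)=-\frac{\kappa}{2\lambda}\Bigl(e^{-\lambda t}\int_{1}^{t}e^{\lambda s}s^{2H-2}\,ds+e^{\lambda t}\int_{t}^{\infty}e^{-\lambda s}s^{2H-2}\,ds\Bigr)
\]
is the bounded particular solution. Watson/Laplace expansions of the two integrals at their moving endpoint (with the odd-order corrections cancelling between them) give
\[
r(t)=-\frac{\kappa}{\lambda^{2}}t^{2H-2}-\frac{\kappa(2H-2)(2H-3)}{\lambda^{4}}t^{2H-4}+O(t^{2H-6})+O(e^{-\lambda t}),
\]
and hence, by the identity again,
\[
r''(t)=\lambda^{2}r(t)+\kappa t^{2H-2}=-\frac{\kappa(2H-2)(2H-3)}{\lambda^{2}}t^{2H-4}+O(t^{2H-6})+O(e^{-\lambda t}).
\]
Since $\kappa>0$ and $(2H-2)(2H-3)>0$, this is strictly negative for all large $t$, so $r''<0$ on some $(M,\infty)$. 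Finally, $r''$ is continuous on $(0,\infty)$, positive near $0$ and negative for large $t$; taking $T_{0}:=\sup\{t>0:\ r''>0\text{ on }(0,t)\}$, which is finite and positive by the above, gives $r''>0$ on $(0,T_{0})$ and $r''(T_{0})=0$ by continuity, and letting $T_{1}$ be any number at least $\sup\{t>0:\ r''(t)\ge0\}$ (finite by the negativity at infinity) and, if necessary, strictly larger than $T_{0}$, one gets $r''<0$ on $(T_{1},\infty)$ and $T_{0}<T_{1}$.

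The step I expect to be the main obstacle is the large-$t$ analysis. The leading $t^{2H-2}$ contribution of $\lambda^{2}r(t)$ cancels $\kappa t^{2H-2}$ exactly (a cancellation one can double-check from the integral representation, where it reduces to the reflection identity $G(2H)G(1-2H)=\pi/\sin(2\pi H)$), so the sign of $r''$ at infinity is decided only at the next order $t^{2H-4}$, and it comes out negative precisely because $\kappa>0$, i.e.\ because $H<\tfrac{1}{2}$; making the Watson/Laplace remainders rigorous, and confirming that the odd-order terms in the two endpoint expansions really do cancel, are the points that need care.
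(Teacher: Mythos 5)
Your proof is correct, and it takes a genuinely different route from the paper's. For part (i) the paper works from the two quoted expansions of $r$ (near $0$ and near $\infty$) and patches three cases together ($|t-s|\ge 1$; $s,t\in[0,2]$ with $r(t)=-ct^{2H}+\varphi(t)$, $\varphi'$ bounded; $s,t\in[1,\infty)$ with $r'$ bounded), whereas your single split of the oscillatory integral at $x_{0}=1/(\lambda|t-s|)$ with $|\cos a-\cos b|\le\min\{2,|a-b|\}$ gives the uniform H\"older bound in one stroke, with an explicit constant $C_{0}\bigl(\tfrac{1}{1-2H}+\tfrac{1}{H}\bigr)\lambda^{2H}$ and no case distinction; the only price is that the constant degenerates as $H\to\tfrac12^{-}$, which is harmless here. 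For part (ii) the paper essentially asserts the signs of $r''$ near $0$ and near $\infty$ from the quoted expansions ("one can see that\dots"), while you derive the exact identity $r''(t)-\lambda^{2}r(t)=\kappa\,t^{2H-2}$ with $\kappa=\sigma^{2}H(1-2H)>0$ from the hypergeometric series (I checked the Pochhammer cancellation: $(2m+2H)(2m+2H-1)=4(m+H)(m+H-\tfrac12)$ indeed matches the shift in $(H+\tfrac12)_{m}(H+1)_{m}$, so only the $n=0$ term survives). This identity makes the blow-up $r''\to+\infty$ at $0^{+}$ immediate from boundedness of $r$, and your Green's-function/Watson expansion at infinity reproduces exactly the $n=1,2$ terms of the expansion the paper cites from Cheridito--Kawaguchi--Maejima, so the leading cancellation and the sign $-\kappa(2H-2)(2H-3)\lambda^{-2}t^{2H-4}<0$ are consistent with the paper's route. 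The one place where your write-up is still a sketch is the rigorous control of the Watson/Laplace remainders (and the exponentially small endpoint contributions); as you note this is routine, and in fact you could shortcut it entirely by combining your ODE identity with the paper's already-quoted large-$t$ expansion of $r$, which yields $r''(t)=\lambda^{2}r(t)+\kappa t^{2H-2}=-\kappa(2H-2)(2H-3)\lambda^{-2}t^{2H-4}+O(t^{2H-6})$ directly. Net effect: your argument is more self-contained and supplies the quantitative details the paper leaves implicit, at the cost of somewhat more computation.
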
 \begin{proof} %(i) It can be seen easily from the analytic form of $r(\cdot)$.\\
For covariance function $r(\cdot)$, near $t=0$,
\[
r(t)=\sigma^{2}\lambda^{-2H}HG(2H)\left(1-\frac{\lambda^{2H}}{G(2H+1)}t^{2H}+o(t^{2H})\right),
\]
and for $t$ large enough (see Theorem 2.3, \cite{CKM03}),
\[
r(t)=\frac{1}{2}\sigma^{2}\sum_{n=1}^{N}\lambda^{-2n}\left(\prod_{k=0}^{2n-1}(2H-k)\right)t^{2H-2n}+O(t^{2H-2N-2}).
\]
Since $r(t)$ is continuous on $[0,\infty)$, one can also see
that $r(t)$ has polynomial decay to zero as $t$ is large from above
equality.

For (i), we have that $\max_{t\geq0}|r(t)|=C<\infty$, for any $s,t\in\mathbb{R}_{+}$, 
and $|t-s|\geq1$, so
\[
|r(t)-r(s)|\leq2C\leq2C|t-s|^{2H}.
\]
For any $s,t\in\mathbb{R}_{+}$ and $|t-s|<1$, we show the statement
in three cases: $s,t\in[0,1]$, $s,t\in[1,\infty)$ and $0\leq s<1<t(<2)$.
For the first and third terms, we actually need to show that, for any
$s,t\in[0,2]$ and $|t-s|<1$, there exists a constant $C$ such that
$|r(t)-r(s)|\leq C|t-s|^{2H}.$ Since $r(t)=-ct^{2H}+\varphi(t)$,
where $\varphi(\cdot)$ is smooth on $\mathbb{R}_{+}$, 
\[
|r(t)-r(s)|\leq c|t-s|^{2H}+\max_{0\leq u\leq2}|\varphi'(u)||t-s|\leq C|t-s|^{2H}.
\]
For the second case, i.e., for any $s,t\in[1,\infty)$ and $|t-s|<1$,
we have
\[
|r(t)-r(s)|\leq\max_{u\geq1}|r'(u)||t-s|\leq C|t-s|^{2H}.
\]
Thus we have prove the statement (i).

For (ii), one can see that there exists a small number $\varepsilon>0$
such that $r''(\varepsilon)>0$ and a large number $T_{1}>0$ such
that, for all $t\geq T_{1}$, $r''(t)<0$. By the continuity of $r''$
on $(0,\infty)$, there exists a $T_{0}\in(\varepsilon,T_{1})$ satisfying that 
$r''(T_{0})=0$ and $r''(t)>0$ for $t\in(0,T_{0})$. \end{proof}

What follows are important properties of fOU processes when $H<\frac{1}{2}$.
It is well-known that, increments of fBM are negatively correlated
when $H<\frac{1}{2}$, and positively correlated when $H>\frac{1}{2}$,
while for $H=\frac{1}{2}$, increments over different time periods
are independent. We found that for fOU process with $H<\frac{1}{2}$,
the disjoint increments are \textit{locally negative correlated}.
If the distance of the intervals corresponding to the disjoint increments
is large, then they are positively correlated, and we call this \textit{long-range
positive correlation} (look at the theorem below). Heuristically, fOU process
is locally like fBM so that it has the locally negative correlation
property as fBM when $H<\frac{1}{2}$. For long distance the drift
becomes the dominated force, so the fOU behaves positively correlated.
In the case where $H=\frac{1}{2}$, the fOU is the standard OU process
driven by standard Bownian motion. The properties of it are well known.
Our main concern here is for the true fOU process case with $H<\frac{1}{2}$.
\begin{thm}\label{lem-correlation} Consider the stationary fOU process
$X$ with $H<\frac{1}{2}$. $T_{0},T_{1}$ are given in the previous
lemma. \\
 (i) (Locally negative correlation) For any $s_{0}$ and $s_{0}\leq t_{i}<t_{i+1}\leq t_{j}<t_{j+1}\leq s_{0}+T_{0}$,
then
\begin{equation}
\mathbb{E}(X_{t_{i+1}}-X_{t_{i}})(X_{t_{j+1}}-X_{t_{j}})\leq0.\label{negative-correlated}
\end{equation}
(ii) (Long-range positive correlation) For any $0\leq t_{i}<t_{i+1}<t_{j}<t_{j+1}$,
and if $t_{j}-t_{i+1}>T_{1}$, then
\begin{equation}
\mathbb{E}(X_{t_{i+1}}-X_{t_{i}})(X_{t_{j+1}}-X_{t_{j}})\geq0.\label{positive-correlated}
\end{equation}
\end{thm}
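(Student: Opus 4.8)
The plan is to express the covariance of two disjoint increments entirely in terms of the second derivative $r''$, and then deduce both signs from the convexity/concavity information in Lemma~\ref{corr-lemma-r}(ii). First I would observe that, since $X$ is stationary with covariance $r$, for any $a<b\le c<d$ one has
\[
\mathbb{E}(X_b-X_a)(X_d-X_c)=r(d-b)-r(c-b)-r(d-a)+r(c-a).
\]
Writing $g(u):=r(u-b)-r(u-a)$ for $u\in\{c,d\}$, this equals $g(d)-g(c)=\int_c^d g'(u)\,du=\int_c^d\big(r'(u-b)-r'(u-a)\big)\,du$, and then applying the fundamental theorem of calculus once more in the inner variable gives the double-integral representation
\[
\mathbb{E}(X_b-X_a)(X_d-X_c)=-\int_c^d\!\int_a^b r''(u-v)\,dv\,du.
\]
(One should note that $r$ is only $2H$-Hölder, hence not $C^2$ at $0$; but in part~(i) the arguments $u-v$ range over a set bounded away from $0$ except possibly at the single corner $b=c$, and in part~(ii) the separation $t_j-t_{i+1}>T_1>0$ keeps $u-v$ bounded away from $0$ throughout, so the formula is legitimate as a genuine Riemann integral — a small limiting/approximation argument near the corner handles the boundary case in (i).)

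For part~(i), with $s_0\le t_i<t_{i+1}\le t_j<t_{j+1}\le s_0+T_0$, set $a=t_i$, $b=t_{i+1}$, $c=t_j$, $d=t_{j+1}$. Then every difference $u-v$ with $v\in[t_i,t_{i+1}]$ and $u\in[t_j,t_{j+1}]$ satisfies $0\le u-v\le t_{j+1}-t_i\le T_0$, so by Lemma~\ref{corr-lemma-r}(ii) we have $r''(u-v)\ge 0$ on the whole domain of integration, whence the double integral is $\ge 0$ and the covariance is $\le 0$, which is \eqref{negative-correlated}.

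For part~(ii), with $0\le t_i<t_{i+1}<t_j<t_{j+1}$ and $t_j-t_{i+1}>T_1$, the same substitution gives $u-v\ge t_j-t_{i+1}>T_1$ for all $v\in[t_i,t_{i+1}]$, $u\in[t_j,t_{j+1}]$, so Lemma~\ref{corr-lemma-r}(ii) now gives $r''(u-v)<0$ throughout, the double integral is $\le 0$, and hence the covariance is $\ge 0$, which is \eqref{positive-correlated}. The main obstacle I anticipate is the low regularity of $r$ at the origin: one must justify differentiating under the integral sign (equivalently, the double-integration-by-parts identity) when the kernel $r''$ blows up like $u^{2H-2}$ near $u=0$. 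This is harmless in (ii) by the uniform separation $T_1$, and in (i) it only affects the degenerate corner $t_{i+1}=t_j$, where one can first prove the inequality for $t_j>t_{i+1}$ and then pass to the limit $t_j\downarrow t_{i+1}$ using continuity of both sides in the endpoints (the covariance side is continuous because $r$ is continuous). A secondary point worth checking is that the representation and the sign conclusions are for the \emph{stationary} fOU; extending to the general (non-stationary) fOU, if needed later, would require the explicit non-stationary covariance rather than $r$ alone, but the statement as given is only about the stationary version.
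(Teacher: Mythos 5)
Your argument is correct, and it starts from the same four-term identity for the increment covariance that the paper uses and rests on the same key input, Lemma~\ref{corr-lemma-r}(ii); the difference is in how the sign of $r''$ is converted into the four-point inequality. The paper never integrates $r''$: setting $x_{1}=t_{j}-t_{i+1}$, $x_{2}=t_{j}-t_{i}$, $x_{3}=t_{j+1}-t_{i+1}$, $x_{4}=t_{j+1}-t_{i}$, it notes that $x_{2}-x_{1}=x_{4}-x_{3}=t_{i+1}-t_{i}$ and invokes the monotonicity of chord slopes of a convex (resp.\ concave) function,
\[
\frac{r(x_{4})-r(x_{3})}{x_{4}-x_{3}}\geq\frac{r(x_{2})-r(x_{1})}{x_{2}-x_{1}},
\]
which uses only convexity of $r$ on $[0,T_{0}]$ (concavity on $(T_{1},\infty)$) and needs no differentiability of $r$, so the blow-up at the origin never enters. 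You instead represent the covariance as $-\int_{t_{j}}^{t_{j+1}}\int_{t_{i}}^{t_{i+1}}r''(u-v)\,dv\,du$ and read the sign off the integrand; this is the second-order form of the same convexity argument and it is valid, but it obliges you to justify the representation given that $r''(u)\sim c\,u^{2H-2}$ is not locally integrable in one variable near $u=0$ (the touching corner $t_{i+1}=t_{j}$ in part (i)). Your two proposed fixes both work: the limit $t_{j}\downarrow t_{i+1}$ using continuity of $r$, or the observation that the inner integration produces $r'(u-t_{i})-r'(u-t_{i+1})$, whose singular part behaves like $(u-t_{i+1})^{2H-1}$ and is therefore integrable in $u$. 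The trade-off is that your representation makes the mechanism transparent (increments are anticorrelated exactly where $r$ is convex), at the cost of a regularity discussion that the paper's chord-slope formulation avoids entirely.
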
 \begin{proof} (i) Since
\[
\begin{split} & \ \ \ \ \mathbb{E}(X_{t_{i+1}}-X_{t_{i}})(X_{t_{j+1}}-X_{t_{j}})\\
 & =\left(r(t_{j+1}-t_{i+1})-r(t_{j+1}-t_{i})\right)-\left(r(t_{j}-t_{i+1})-r(t_{j}-t_{i})\right)\\
 & =\left(r(x_{3})-r(x_{4})\right)-\left(r(x_{1})-r(x_{2})\right)\\
 & =-\left[\left(r(x_{4})-r(x_{3})\right)-\left(r(x_{2})-r(x_{1})\right)\right],
\end{split}
\]
where $x_{1}:=t_{j}-t_{i+1}$, $x_{2}:=t_{j}-t_{i}$, $x_{3}:=t_{j+1}-t_{i+1}$,
$x_{4}:=t_{j+1}-t_{i}$, we have that $0\leq x_{1}<x_{2}\leq x_{3}<x_{4}\leq T_{0}$
or $0\leq x_{1}<x_{3}\leq x_{2}<x_{4}\leq T_{0}$, and
\[
\frac{r(x_{4})-r(x_{3})}{x_{4}-x_{3}}\geq\frac{r(x_{2})-r(x_{1})}{x_{2}-x_{1}},
\]
by the convexity of $r$. This proves (\ref{negative-correlated}). \\
 (ii) The proof of (\ref{positive-correlated}) is almost the same
as (i). \end{proof}

Recall that, the stationary fOU process $X$ satisfies the inequality
\begin{equation}
\mathbb{E}|X_{t}-X_{s}|^{2}\leq C_{H}|t-s|^{2H}
\end{equation}
for any $s,t\in\mathbb{R}_{+}$, and $C_{H}$ depends on $H,\sigma,\lambda$ (we ignore $\sigma,\lambda$ in the notation here). This result can be found in Lemma 5.7 of \cite{KMR2017}.

% Now we have the following propositions.
% \begin{prop}\label{Holder}
% For the stationary fOU process $X$, it satisfies that
% \begin{equation}
% \mathbb{E}|X_{t}-X_{s}|^{2}\leq C_{H}|t-s|^{2H},
% \end{equation}
% for any $s,t\in\mathbb{R}_{+}$, and $C_{H}$ depends on $H,\sigma,\lambda$
% only (we ignore $\sigma,\lambda$ here).
% \end{prop}
% \begin{proof}
% Since $X$ is a stationary Gaussian process, then
% \[
% \begin{split}\mathbb{E}|X_{t}-X_{s}|^{2} & =\mathbb{E}(X_{t}^{2}+X_{s}^{2}-2X_{t}X_{s})\\
%  & =2(r(0)-r(|t-s|))\\
%  & \leq C_{H}|t-s|^{2H}.
% \end{split}
% \]
% The last inequality holds by $2H$-H\"{o}lder continuity of the covariance
% function $r(t)$, i.e. Lemma \ref{corr-lemma-r}.(i).
% \end{proof}

\begin{prop}\label{Variation} Let $X$ be the stationary fOU process
with $H\in(0,\frac{1}{2})$. Then its covariance $R_{X}(s,t)=\mathbb{E}(X_{s}X_{t})$
is of finite $\frac{1}{2H}$-variation on $[s_{0},s_{0}+T_{0}]^{2}$
in the $2D$ sense for any $s_{0}$. Moreover, there exist constants $C=C(H)$
and $T_{0}>0$ such that, for all $s<t$ in $[s_{0},s_{0}+T_{0}]$,
\begin{equation}
|R_{X}|_{\frac{1}{2H}-var;[s,t]^{2}}\leq C(H)|t-s|^{2H},
\end{equation}
where
\begin{equation}
|R_{X}|_{\rho-var;[s,t]^{2}}^{\rho}:=\sup\sum_{i,j}\left|\mathbb{E}\left[(X_{t_{i+1}}-X_{t_{i}})(X_{t'_{j+1}}-X_{t'_{j}})\right]\right|^{\rho},
\end{equation}
and $\mathcal{P}=\{t_{i}\}$, $\mathcal{P}'=\{t'_{j}\}$ are any two
partitions of interval $[s,t]$. \end{prop}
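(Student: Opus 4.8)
The plan is to split the covariance into the part carrying the $|t-s|^{2H}$ singularity, which behaves exactly like the fractional Brownian covariance (whose finite $\tfrac{1}{2H}$-variation in the $2D$ sense is classical), and a $C^{2}$ remainder, which has finite $1$-variation in the $2D$ sense and hence \emph{a fortiori} finite $\tfrac{1}{2H}$-variation; since we work on the bounded interval $[s_{0},s_{0}+T_{0}]$, the scaling $|t-s|^{2H}$ comes out for both pieces.

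First, I would record the decomposition established in the proof of Lemma \ref{corr-lemma-r}: for the stationary fOU process, $R_{X}(s,t)=r(|t-s|)$ with $r(u)=-c\,u^{2H}+\varphi(u)$, where $c$ is a constant and $\varphi\in C^{2}([0,T_{0}])$; set $M:=\sup_{0\le u\le T_{0}}|\varphi''(u)|$. Extending $\varphi$ to the even function $\bar\varphi(x):=\varphi(|x|)$, which is still $C^{2}$ near $0$ precisely because the non-smooth term $u^{2H}$ has been removed, one obtains $R_{X}=R_{1}+R_{2}$ on $[s_{0},s_{0}+T_{0}]$, with $R_{1}(s,t):=-c|t-s|^{2H}$ and $R_{2}(s,t):=\bar\varphi(t-s)$. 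Writing $R\binom{a,b}{a',b'}:=R(b,b')-R(b,a')-R(a,b')+R(a,a')$ for a rectangular increment ($a\le b$, $a'\le b'$), this assignment is linear in $R$, so Minkowski's inequality — applied to the finite family indexed by a pair of partitions of $[s,t]$, then taking the supremum, which is legitimate since $\tfrac{1}{2H}\ge1$ — gives the subadditivity
\[
|R_{X}|_{\frac{1}{2H}-var;[s,t]^{2}}\le|R_{1}|_{\frac{1}{2H}-var;[s,t]^{2}}+|R_{2}|_{\frac{1}{2H}-var;[s,t]^{2}}.
\]

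Next, I would estimate the two pieces. For $R_{1}$, the diagonal contributions $u^{2H},v^{2H}$ cancel in every rectangular increment, so $R_{1}\binom{a,b}{a',b'}=2c\,R_{H}\binom{a,b}{a',b'}$ where $R_{H}$ denotes the fBM covariance; hence $|R_{1}|_{\frac{1}{2H}-var;[s,t]^{2}}=2|c|\,|R_{H}|_{\frac{1}{2H}-var;[s,t]^{2}}$, which by self-similarity and stationarity of the increments of $B^{H}$ together with the classical estimate $\kappa_{H}:=|R_{H}|_{\frac{1}{2H}-var;[0,1]^{2}}<\infty$ (see \cite{CQ02,FV10}) equals $2|c|\kappa_{H}|t-s|^{2H}$. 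For $R_{2}$, since $\bar\varphi\in C^{2}$ near $0$, for any two subintervals $[a,b],[a',b']$ of $[s,t]$ — overlapping or not, which is precisely the reason for the even extension — the fundamental theorem of calculus gives $R_{2}\binom{a,b}{a',b'}=-\int_{a}^{b}\!\int_{a'}^{b'}\bar\varphi''(v-u)\,dv\,du$, so $\big|R_{2}\binom{a,b}{a',b'}\big|\le M(b-a)(b'-a')$; therefore, for partitions $\{t_{i}\},\{t'_{j}\}$ of $[s,t]$, using $\sum_{i}(t_{i+1}-t_{i})^{1/(2H)}\le\big(\sum_{i}(t_{i+1}-t_{i})\big)^{1/(2H)}=(t-s)^{1/(2H)}$, one obtains $\sum_{i,j}\big|R_{2}\binom{t_{i},t_{i+1}}{t'_{j},t'_{j+1}}\big|^{1/(2H)}\le M^{1/(2H)}(t-s)^{1/H}$, i.e. $|R_{2}|_{\frac{1}{2H}-var;[s,t]^{2}}\le M(t-s)^{2}\le M\,T_{0}^{2-2H}|t-s|^{2H}$ since $t-s\le T_{0}$ and $2-2H>0$. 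Combining the three bounds gives $|R_{X}|_{\frac{1}{2H}-var;[s,t]^{2}}\le C(H)|t-s|^{2H}$ with $C(H)=2|c|\kappa_{H}+M\,T_{0}^{2-2H}$, and taking $[s,t]=[s_{0},s_{0}+T_{0}]$ yields the asserted finite $\tfrac{1}{2H}$-variation of $R_{X}$ on $[s_{0},s_{0}+T_{0}]^{2}$.

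The step that genuinely requires work is the first one, the decomposition $r(u)=-c\,u^{2H}+\varphi(u)$ with $\varphi\in C^{2}$ near $0$: one reads off from the explicit hypergeometric formula for $r$ in the previous subsection that ${}_{1}F_{2}(1;H+\tfrac{1}{2},H+1;\tfrac{1}{4}\lambda^{2}u^{2})$ is entire in $u$, equals $1$ at $u=0$ and differs from $1$ by $O(u^{2})$, so that $u^{2H}\,{}_{1}F_{2}(\cdots)=u^{2H}+O(u^{2H+2})$; subtracting the genuinely singular term $u^{2H}$ leaves the $O(u^{2H+2})$-part, which together with the analytic $\cosh(\lambda u)$-piece is of class $C^{2}$ on $[0,T_{0}]$ (it need not be $C^{3}$, but $C^{2}$ is all that is used, namely for the double-integral bound on $R_{2}\binom{a,b}{a',b'}$). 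A secondary point, already mentioned, is that in the $2D$ $\tfrac{1}{2H}$-variation the two subintervals coming from the two partitions need not be ordered; the even extension $\bar\varphi$ is exactly what makes the identity $R_{2}\binom{a,b}{a',b'}=-\int_{a}^{b}\!\int_{a'}^{b'}\bar\varphi''(v-u)\,dv\,du$, and hence the bound $\le M(b-a)(b'-a')$, valid regardless of the relative position of $[a,b]$ and $[a',b']$.
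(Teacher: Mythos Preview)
Your proof is correct and takes a genuinely different route from the paper's. The paper first reduces, via Lemma~5.54 of \cite{FV10}, to a single partition, and then exploits the \emph{locally negative correlation} of the fOU increments on $[s_{0},s_{0}+T_{0}]$ (Theorem~\ref{lem-correlation}, which rests on the convexity of $r$ there): since all off-diagonal terms $\mathbb{E}[X_{t_{i},t_{i+1}}X_{t_{j},t_{j+1}}]$ share the same sign, the inner sum $\sum_{j\neq i}|\cdot|^{1/(2H)}$ collapses to $|\sum_{j\neq i}\cdot|^{1/(2H)}$, after which the $2H$-H\"older continuity of $r$ (Lemma~\ref{corr-lemma-r}(i)) finishes the estimate. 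Your decomposition $R_{X}=2c\,R_{H}+R_{2}$ bypasses the correlation structure entirely: the singular piece is handled by the classical fBM $2D$-variation bound, and the $C^{2}$ remainder by an elementary double-integral estimate and Minkowski. Your argument is more modular and makes clear that the specific $T_{0}$ coming from the zero of $r''$ plays no essential role in \emph{this} proposition---any finite interval length works, since $\varphi''$ is bounded on compacts---whereas in the paper's proof $T_{0}$ is genuinely needed for convexity. Conversely, the paper's route is self-contained (it does not import the fBM $2D$-variation result) and showcases Theorem~\ref{lem-correlation}, which is of independent interest.
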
 \begin{proof} By Lemma
5.54 of \cite{FV10}, we just need to show the finite $\frac{1}{2H}$-variation
by the same partition $\mathcal{P}=\{t_{i}\}$ of interval $[s,t]\subset[s_{0},s_{0}+T_{0}]$.
Let us consider that
\begin{equation}
\sum_{i,j}\left|\mathbb{E}\left[(X_{t_{i+1}}-X_{t_{i}})(X_{t_{j+1}}-X_{t_{j}})\right]\right|^{\frac{1}{2H}}.
\end{equation}
For a fixed $i$, and $i\neq j$, $\mathbb{E}\left[(X_{t_{i+1}}-X_{t_{i}})(X_{t_{j+1}}-X_{t_{j}})\right]\leq0$
for $H<\frac{1}{2}$ by Theorem \ref{lem-correlation}, and hence,
\[
\begin{split} & \ \ \ \ \sum_{j}\left|\mathbb{E}\left[X_{t_{i},t_{i+1}}X_{t_{j},t_{j+1}}\right]\right|^{\frac{1}{2H}}\\
 & =\sum_{j\neq i}\left|\mathbb{E}\left[X_{t_{i},t_{i+1}}X_{t_{j},t_{j+1}}\right]\right|^{\frac{1}{2H}}+\left(\mathbb{E}\left|X_{t_{i},t_{i+1}}\right|^{2}\right)^{\frac{1}{2H}}\\
 & \leq\left|\mathbb{E}\left(\sum_{j\neq i}X_{t_{i},t_{i+1}}X_{t_{j},t_{j+1}}\right)\right|^{\frac{1}{2H}}+\left(\mathbb{E}\left|X_{t_{i},t_{i+1}}\right|^{2}\right)^{\frac{1}{2H}}\\
 & \leq\left(2^{\frac{1}{2H}-1}\left|\mathbb{E}\left(\sum_{j}X_{t_{i},t_{i+1}}X_{t_{j},t_{j+1}}\right)\right|^{\frac{1}{2H}}+2^{\frac{1}{2H}-1}\left(\mathbb{E}\left|X_{t_{i},t_{i+1}}\right|^{2}\right)^{\frac{1}{2H}}\right)\\
 & \ \ \ \ +\left(\mathbb{E}\left|X_{t_{i},t_{i+1}}\right|^{2}\right)^{\frac{1}{2H}}\\
 & \leq C(H)\left|\mathbb{E}\left[X_{t_{i},t_{i+1}}X_{s,t}\right]\right|^{\frac{1}{2H}}+C(H)\left(\mathbb{E}\left|X_{t_{i},t_{i+1}}\right|^{2}\right)^{\frac{1}{2H}}.
\end{split}
\]
Therefore, we have that 
\[
\sum_{i,j}\left|\mathbb{E}\left[X_{t_{i},t_{i+1}}X_{t_{j},t_{j+1}}\right]\right|^{\frac{1}{2H}}\leq C(H)\sum_{i}\left|\mathbb{E}\left[X_{t_{i},t_{i+1}}X_{s,t}\right]\right|^{\frac{1}{2H}}+C(H)\sum_{i}\left(\mathbb{E}\left|X_{t_{i},t_{i+1}}\right|^{2}\right)^{\frac{1}{2H}}.
\]
The second term on the right hand side is controlled by $C(H)|t-s|$. Now we show that
\[
\sum_{i}\left|\mathbb{E}\left[X_{t_{i},t_{i+1}}X_{s,t}\right]\right|^{\frac{1}{2H}}\leq C(H)|t-s|.
\]
Since
\[
\begin{split}\left|\mathbb{E}\left[X_{t_{i},t_{i+1}}X_{s,t}\right]\right| & =\left|\mathbb{E}\left(X_{t_{i+1}}X_{t}-X_{t_{i}}X_{t}+X_{t_{i}}X_{s}-X_{t_{i+1}}X_{s}\right)\right|\\
 & =\left|r(t-t_{i+1})-r(t-t_{i})+r(t_{i}-s)-r(t_{i+1}-s)\right|\\
 & \leq\left|r(t-t_{i+1})-r(t-t_{i})\right|+\left|r(t_{i}-s)-r(t_{i+1}-s)\right|\\
 & \leq C_{H}\left|t_{i+1}-t_{i}\right|^{2H}+C_{H}\left|t_{i+1}-t_{i}\right|^{2H}\leq2C_{H}\left|t_{i+1}-t_{i}\right|^{2H},
\end{split}
\]
we have that
\[
\sum_{i}\left|\mathbb{E}\left[X_{t_{i},t_{i+1}}X_{s,t}\right]\right|^{\frac{1}{2H}}\leq\sum_{i}C(H)\left|t_{i+1}-t_{i}\right|\leq C(H)|t-s|.
\]
Now we have completed the proof. \end{proof}

\begin{cor}\label{Variation-cor} Let $X$ be the stationary fOU
process with $H\in(0,\frac{1}{2})$. Then its covariance $R_{X}(s,t)=\mathbb{E}(X_{s}X_{t})$
is of finite $\frac{1}{2H}$-variation on $[0,T]^{2}$ in the $2D$ sense.
Moreover, there exists a constant $C=C(H)$ such that, for all $s<t$
in $[0,T]$,
\begin{equation}
|R_{X}|_{\frac{1}{2H}-var;[s,t]^{2}}^{\frac{1}{2H}}\leq C(H)|t-s|.
\end{equation}
\end{cor}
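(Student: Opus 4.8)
I would deduce the global bound from the local one (Proposition \ref{Variation}) by a covering-and-patching argument, the only new ingredient being control of the \emph{off-diagonal} blocks that Proposition \ref{Variation} does not see. If $|t-s|\leq T_0$ there is nothing to prove. For $|t-s|>T_0$ I would cut $[s,t]$ into $M=\lceil 2|t-s|/T_0\rceil$ consecutive intervals $B_1,\dots,B_M$ of equal length $L=|t-s|/M\leq T_0/2$, so that the union of any two neighbouring blocks still has length $\leq T_0$. The standard local-to-global inequality for two-parameter $\rho$-variation (which follows by decomposing rectangular increments along the block endpoints and applying the power-mean inequality $|\sum_{n=1}^{N}a_n|^{\rho}\leq N^{\rho-1}\sum_n|a_n|^{\rho}$; cf.\ \cite{FV10}) then gives, with $\rho=\tfrac{1}{2H}$,
\[
|R_X|^{\frac{1}{2H}}_{\frac{1}{2H}-var;[s,t]^2}\ \leq\ M^{\,1/H-2}\sum_{k,l=1}^{M}|R_X|^{\frac{1}{2H}}_{\frac{1}{2H}-var;B_k\times B_l},
\]
and since $M\leq 4T/T_0$ the prefactor $M^{1/H-2}$ is a harmless constant; it remains to bound the double sum by $C(H)|t-s|$.

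I would split the double sum according to $|k-l|$. For the diagonal squares $B_k\times B_k$, Proposition \ref{Variation} applies directly and gives $|R_X|^{\frac{1}{2H}}_{\frac{1}{2H}-var;B_k\times B_k}\leq C(H)L$. For neighbouring blocks $B_k\times B_{k\pm1}$, every rectangular increment $\mathbb E[(X_{u'}-X_u)(X_{v'}-X_v)]$ is $\leq 0$ by Theorem \ref{lem-correlation}(i) (the four points lie in an interval of length $2L\leq T_0$), so the telescoping argument from the proof of Proposition \ref{Variation} applies: $\sum|\cdot|^{\frac{1}{2H}}\leq|\sum(\cdot)|^{\frac{1}{2H}}=|\mathbb E[X_{B_k}X_{B_{k\pm1}}]|^{\frac{1}{2H}}\leq C(H)L$, using the $2H$-H\"older continuity of $r$ from Lemma \ref{corr-lemma-r}(i). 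For well-separated blocks $B_k\times B_l$ with $|k-l|\geq 2$, the two increment intervals are separated by at least $(|k-l|-1)L>0$, a range on which $r$ is smooth, so $|\mathbb E[(X_{u'}-X_u)(X_{v'}-X_v)]|\leq\big(\sup_{x\geq(|k-l|-1)L}|r''(x)|\big)\,|u'-u|\,|v'-v|$; invoking $r''(x)=O(x^{2H-4})$ as $x\to\infty$ (differentiate the large-$t$ expansion of $r$ used in the proof of Lemma \ref{corr-lemma-r}) one obtains $|R_X|^{\frac{1}{2H}}_{\frac{1}{2H}-var;B_k\times B_l}\leq C(H)|k-l|^{\,1-2/H}$. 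Adding up, the diagonal and neighbouring contributions total $O(ML)=O(|t-s|)$, and the separated ones total $\leq C(H)\sum_{m\geq 2}(M-m)\,m^{1-2/H}=O(M)=O(|t-s|)$, the series converging because $1-2/H<-1$ for $H<\tfrac{1}{2}$. This gives the claim.

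The step I expect to be the real obstacle is exactly this off-diagonal control: the naive estimate $|\mathbb E[(X_{u'}-X_u)(X_{v'}-X_v)]|\leq\|r''\|_{\infty}|u'-u||v'-v|$ is false near the diagonal, because $r''$ blows up like $x^{2H-2}$ as $x\to0$; that is why the neighbouring blocks must be handled through the sign information of Theorem \ref{lem-correlation} rather than through $r''$, and why for the remaining blocks one must retain the separation index $|k-l|$ and exploit the polynomial decay of $r''$. Alternatively, and more cleanly, one may avoid the covering altogether by using $R_X(s,t)=r(|t-s|)$ together with the decomposition $r(x)=-c\,x^{2H}+\varphi(x)$ from the proof of Lemma \ref{corr-lemma-r}: up to the one-variable term $c(s^{2H}+t^{2H})$, which contributes no rectangular increments, the first piece is a multiple of the fractional-Brownian covariance, whose $\tfrac{1}{2H}$-variation on $[s,t]^2$ is bounded by $C(H)|t-s|^{2H}$ (classical); while for $H<\tfrac{1}{2}$ one checks that $\varphi''$ is bounded on compacts and of order $x^{2H-2}$ at infinity — the singular parts of $r''$ and of $(x^{2H})''$ at $0$ cancelling — so the $\varphi$-part has finite one-parameter variation on $[0,T]^2$ and hence finite $\tfrac{1}{2H}$-variation, and summing the two pieces gives the estimate (again with a constant that also depends on the fixed horizon $T$).
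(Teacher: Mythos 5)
Your route is genuinely different from the paper's. The paper's proof of Corollary \ref{Variation-cor} is two lines: it splits $[s,t]$ at the multiples of $T_{0}$ and invokes ``subadditivity'' of $\omega(s,t):=|R_{X}|_{\frac{1}{2H}-var;[s,t]^{2}}^{\frac{1}{2H}}$ to reduce the whole square to the chain of diagonal blocks $[qT_{0},(q+1)T_{0}]^{2}$, each of which is covered by Proposition \ref{Variation}; the off-diagonal rectangles $B_{k}\times B_{l}$, $k\neq l$, never appear. You are right that these are exactly where the work lies (for 2D $\rho$-variation the quantity $\omega$ is super-additive over grid decompositions, so the whole square dominates the sum over \emph{all} blocks, and a reduction to the diagonal blocks alone needs justification), and your treatment of them --- the sign information of Theorem \ref{lem-correlation}(i) plus telescoping for adjacent blocks, the decay $r''(x)=O(x^{2H-4})$ together with $\sum_{m}m^{1-2/H}<\infty$ for separated blocks --- is correct and is essentially the argument used for fBM in \cite{FV10}. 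Your alternative route via $r(x)=-cx^{2H}+\varphi(x)$ is also viable on a fixed horizon.

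The genuine gap is the prefactor $M^{1/H-2}$ in your local-to-global step. Since $M\asymp|t-s|/T_{0}$, this is not ``a harmless constant'': as written your argument yields $|R_{X}|_{\frac{1}{2H}-var;[s,t]^{2}}^{\frac{1}{2H}}\leq C(H)|t-s|^{1/H-1}$ for $|t-s|$ large, equivalently a constant $C(H,T)$ growing polynomially in $T$, whereas the corollary asserts $C=C(H)$. This uniformity is not cosmetic: the corollary is applied with $[s,t]=[0,t]$ and $t\to\infty$ to obtain (\ref{4H-ineq}), $\mathbb{E}|A_{ij}(t)|^{2}\leq Ct^{4H}$, which feeds the Borel--Cantelli argument of Proposition \ref{discrete-converge}; replacing $|t-s|$ by $|t-s|^{1/H-1}$ only gives $\mathbb{E}|A_{ij}(t)|^{2}\leq Ct^{4-4H}$, and then $n^{-2H}A_{ij}(n)$ is no longer bounded in $L^{2}$ for $H<\frac{1}{2}$, so that proof breaks. (By stationarity your weaker bound does suffice for Proposition \ref{Holder-prop}, where effectively $|t-s|\leq1$.) The repair is to avoid chopping the \emph{partition} into blocks at all: for an arbitrary common partition $\{t_{i}\}$ of $[s,t]$, bound each row sum $\sum_{j}|\mathbb{E}[X_{t_{i},t_{i+1}}X_{t_{j},t_{j+1}}]|^{\frac{1}{2H}}$ directly by $C(H)|t_{i+1}-t_{i}|$, splitting the indices $j$ according to the distance of $[t_{j},t_{j+1}]$ from $[t_{i},t_{i+1}]$ and using exactly your two ingredients (the sign/telescoping argument for the near indices, the decay of $r''$ for the far ones); no combinatorial factor then arises, and one recovers the $T$-independent constant that the paper's later sections require.
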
 \begin{proof} We divide the interval $[0,T]$ into $m+1=\left[\frac{T}{T_{0}}\right]+1$
pieces, and denote them as $[0,T_{0}]$, $[T_{0},2T_{0}]$, $\cdots$,
$[(m-1)T_{0},mT_{0}]$, $[mT_{0},T]$. For any subinterval $[s,t]\subset[0,T]$,
there exist $q_{1},q_{2}\in\mathbb{N}$ such that $s\in[(q_{1}-1)T_{0},q_{1}T_{0}]$
and $t\in[q_{2}T_{0},(q_{2}+1)T_{0}]$. By the subadditivity of $|R_{X}|_{\frac{1}{2H}-var;[\cdot,\cdot]^{2}}^{\frac{1}{2H}}$, we then have that 
\[
\begin{split}|R_{X}|_{\frac{1}{2H}-var;[s,t]^{2}}^{\frac{1}{2H}} & \leq|R_{X}|_{\frac{1}{2H}-var;[s,q_{1}T_{0}]^{2}}^{\frac{1}{2H}}+|R_{X}|_{\frac{1}{2H}-var;[q_{1}T_{0},(q_{1}+1)T_{0}]^{2}}^{\frac{1}{2H}}+\cdots+|R_{X}|_{\frac{1}{2H}-var;[q_{2}T_{0},t]^{2}}^{\frac{1}{2H}}\\
 & \leq C(H)(|q_{1}T_{0}-s|+|2q_{1}T_{0}-q_{1}T_{0}|+\cdots+|t-q_{2}T_{0}|)\\
 & \leq C(H)|t-s|.
\end{split}
\]
This completes the proof of the corollary. \end{proof}

\subsubsection{Regularity of fOU processes}

In what follows, we study the $\alpha$-H\"older continuity of the one
dimensional, stationary fOU process $X_{t}=\sigma\int_{-\infty}^{t}e^{-\lambda(t-s)}dB_{s}^{H}$.
Before showing the regularity, we recall the usual Garsia-Rodemich-Rumsey
inequality (see e.g., page 60, Stroock and Varadhan \cite{SV79}).
\begin{lem}(Garsia-Rodemich-Rumsey inequality) Let $p(\cdot)$ and
$\Psi(\cdot)$ be continuous, strictly increasing functions on $[0,\infty)$
such that
\[
p(0)=\Psi(0)=0,\ \ \text{and}\ \ \lim_{t\to\infty}\Psi(t)=\infty.
\]
Given $T>0$ and $\phi\in C([0,T],\mathbb{R}^{d})$, if there is a
constant $B$ such that
\begin{equation}
\int_{0}^{T}\int_{0}^{T}\Psi\left(\frac{|\phi(t)-\phi(s)|}{p(|t-s|)}\right)dsdt\leq B,
\end{equation}
then, for all $0\leq s\leq t\leq T$,
\begin{equation}
|\phi(t)-\phi(s)|\leq8\int_{0}^{|t-s|}\Psi^{-1}\left(\frac{4B}{u^{2}}\right)p(du).
\end{equation}
\end{lem}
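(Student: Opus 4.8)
The plan is to reproduce the classical Garsia--Rodemich--Rumsey argument (see \cite{SV79}, and also \cite{FV10}), which writes $\phi(t)-\phi(s)$ as a telescoping sum along a carefully chosen sequence of points shrinking to $s$. Fix $0\le a<b\le T$; we must estimate $|\phi(b)-\phi(a)|$. Since the integrand $\Psi(|\phi(u)-\phi(v)|/p(|u-v|))$ is nonnegative, restricting the double integral from $[0,T]^{2}$ to the sub--square $[a,b]^{2}$ only preserves (indeed improves) the bound by $B$, so I may work entirely on $[a,b]$ with the same constant $B$; and because $\Psi^{-1}$ and $p$ are increasing, keeping $B$ and the length $|b-a|$ on the right then only makes the bound to be proved weaker, which is harmless.

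For $u\in[a,b]$ set $I(u):=\int_{a}^{b}\Psi\!\left(\frac{|\phi(u)-\phi(v)|}{p(|u-v|)}\right)dv$, so that $\int_{a}^{b}I(u)\,du\le B$. The core step is to construct recursively a strictly decreasing sequence $t_{0}=b>t_{1}>t_{2}>\cdots$ with $t_{n}\to a$. Suppose $t_{n}$ is chosen with $d_{n}:=t_{n}-a$ and $I(t_{n})$ already bounded by a fixed multiple of $B/d_{n}$. Looking in the left part of the interval $(a,t_{n})$, I apply Chebyshev's inequality twice --- to $u\mapsto I(u)$ and to $u\mapsto\Psi\!\left(\frac{|\phi(t_{n})-\phi(u)|}{p(t_{n}-u)}\right)$ --- so that each of the two sets on which these functions exceed suitable multiples of their averages occupies less than half of the interval under consideration. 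A point $t_{n+1}$ avoiding both sets therefore exists; it satisfies the inductive hypothesis at the next stage, forces $d_{n+1}\le d_{n}/2$, and yields
\[
|\phi(t_{n})-\phi(t_{n+1})|\le p(t_{n}-t_{n+1})\,\Psi^{-1}\!\left(\frac{4B}{d_{n}^{2}}\right),
\]
the constants being arranged precisely so that $4B$ appears inside $\Psi^{-1}$. In particular $d_{n}\le 2^{-n}d_{0}\to 0$, so $t_{n}\to a$.

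By continuity of $\phi$ we then have $\phi(t_{n})\to\phi(a)$, hence $|\phi(b)-\phi(a)|\le\sum_{n\ge 0}|\phi(t_{n})-\phi(t_{n+1})|\le\sum_{n\ge 0}p(t_{n}-t_{n+1})\,\Psi^{-1}(4B/d_{n}^{2})$. The last sum is then recognised as a Riemann--Stieltjes lower sum for the target integral: for $u\in[d_{n+1},d_{n}]$ one has $4B/d_{n}^{2}\le 4B/u^{2}$ and hence $\Psi^{-1}(4B/d_{n}^{2})\le\Psi^{-1}(4B/u^{2})$, while $p(t_{n}-t_{n+1})$ is estimated by the increment of $p$ over $[d_{n+1},d_{n}]$; summing over $n$ and collecting the universal constant produces $8\int_{0}^{b-a}\Psi^{-1}(4B/u^{2})\,p(du)$, which is the claim.

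The only genuinely delicate point is the bookkeeping in the recursive construction: choosing the fractions in the two Chebyshev estimates and the ratio $d_{n+1}/d_{n}$ so that simultaneously (i) an admissible $t_{n+1}$ always exists, (ii) the $d_{n}$ really decrease to zero so that the telescoping closes at $\phi(a)$, and (iii) the accumulated factors collapse to exactly $4$ inside $\Psi^{-1}$ and $8$ in front of the integral. Everything else --- nonnegativity of the integrand, Chebyshev, continuity of $\phi$, and monotonicity of $\Psi^{-1}$ and $p$ --- is routine, and if one tolerates a larger absolute constant the argument is shorter; to recover the stated constants I would follow the choices in \cite{SV79}.
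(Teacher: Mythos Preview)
The paper does not prove this lemma at all: it is merely recalled as a classical result with a citation to \cite{SV79}, and then applied. Your sketch is the standard Garsia--Rodemich--Rumsey argument from that reference, so in that sense you are doing strictly more than the paper does.

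One small point in your outline: you start the recursion at $t_{0}=b$ and then invoke the inductive hypothesis ``$I(t_{n})$ already bounded by a fixed multiple of $B/d_{n}$''. But there is no a priori bound on $I(b)$; the usual fix is either to first pick $t_{0}\in(a,b)$ with $I(t_{0})\le B/(b-a)$ by averaging (and then run two sequences, one toward $a$ and one toward $b$), or to make the first step of the recursion special so that only the Chebyshev estimate for $v\mapsto\Psi(|\phi(b)-\phi(v)|/p(b-v))$ is used to locate $t_{1}$. Either way the constants still come out as $4$ and $8$; since you already flag the bookkeeping as the delicate part and defer to \cite{SV79} for the exact choices, this is not a genuine gap, just something to make explicit when you write it out in full.
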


As an application of the lemma above, we have

\begin{prop}\label{Holder-X-lem} Let $X$ be a one dimensional,
stationary fOU process with $H\in(0,\frac{1}{2})$ on $[0,T]$. Then
there exist a constant $0<\beta<1$ and an almost surely finite random
variable $C$ independent of $T$ such that
\begin{equation}
|X_{t}-X_{s}|\leq CT^{\beta}|t-s|^{\alpha},\ a.s.\label{Holder-X}
\end{equation}
for any $\alpha\in(0,H)$, and any $0\leq s,t\leq T$.
\end{prop}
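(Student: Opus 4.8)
The plan is to apply the Garsia--Rodemich--Rumsey inequality with the choices $\Psi(x) = x^{2m}$ for a sufficiently large integer $m$ and $p(u) = u^{\gamma + 1/(2m)}$ for a suitable exponent $\gamma$, which is the standard route to H\"older continuity of Gaussian processes. With these choices the GRR bound becomes
\[
|X_t - X_s| \leq C \int_0^{|t-s|} \left(\frac{B}{u^2}\right)^{1/(2m)} u^{\gamma - 1 + 1/(2m)}\,du = C' B^{1/(2m)} |t-s|^{\gamma},
\]
provided the integral converges, i.e. provided $\gamma > 0$. So the task reduces to showing that $B = \int_0^T\int_0^T \frac{|X_t-X_s|^{2m}}{|t-s|^{2m\gamma + 1}}\,ds\,dt$ has finite expectation (whence it is a.s. finite) with the correct dependence on $T$.

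The key step is the moment estimate for $B$. Since $X$ is a stationary Gaussian process, $X_t - X_s$ is centered Gaussian, so by the Gaussian moment formula
\[
\mathbb{E}|X_t - X_s|^{2m} = c_m \bigl(\mathbb{E}|X_t - X_s|^2\bigr)^m \leq c_m C_H^m |t-s|^{2mH},
\]
where the last inequality is exactly Proposition \ref{Holder}. Therefore
\[
\mathbb{E}[B] \leq c_m C_H^m \int_0^T\int_0^T |t-s|^{2mH - 2m\gamma - 1}\,ds\,dt.
\]
Choosing $\gamma < H$ makes the exponent $2m(H-\gamma) - 1 > -1$ once $m$ is large enough (precisely $m > \frac{1}{2(H-\gamma)}$), so the double integral converges; a direct computation gives $\mathbb{E}[B] \leq C(H,m,\gamma)\, T^{2m(H-\gamma)+1}$. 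Plugging back, $|X_t - X_s| \leq C' B^{1/(2m)} |t-s|^{\gamma}$ with $B^{1/(2m)}$ an a.s. finite random variable whose size is of order $T^{(H-\gamma) + 1/(2m)}$; absorbing this into the constant gives the claimed bound with $\alpha = \gamma$ arbitrary in $(0,H)$ and $\beta = (H-\gamma) + \tfrac{1}{2m} \in (0,1)$ for $m$ large. One must check that $\beta$ can be taken in $(0,1)$: since $\alpha \in (0,H)$ is fixed, $H - \alpha < H < 1/2 < 1$, and we can take $m$ large enough that $1/(2m)$ is small, so $\beta < 1$; also $\beta > 0$ trivially. The random variable $C$ (namely $C' B^{1/(2m)}/T^{\beta}$, or rather $C'$ times a $T$-independent bound for $B/T^{2m\beta}$) is independent of $T$ once we note the integrand's scaling; more carefully, one fixes $T$ and observes the final constant does not grow faster than $T^\beta$, so dividing out $T^\beta$ leaves an a.s.\ finite random variable with $T$-independent distribution tail — I would state this by first working on $[0,T]$ and tracking the explicit $T$-dependence as above.

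The main obstacle is not analytic depth but bookkeeping: one must (a) verify the exponent conditions $\gamma > 0$ and $2m(H-\gamma) > 0$ simultaneously with $\beta = (H-\gamma) + 1/(2m) < 1$, which is easy since $H < 1/2$; and (b) justify that the random variable $C$ can be chosen independent of $T$. Point (b) is slightly delicate because GRR is applied on $[0,T]$ and $B$ depends on $T$; the clean way is to prove the estimate with $C = C(\omega) := \sup_{T \in \mathbb{N}} \bigl(B_T(\omega)\bigr)^{1/(2m)} / T^{\beta}$, show $\mathbb{E}\bigl[\sup_T B_T / T^{2m\beta}\bigr] < \infty$ via the moment bound $\mathbb{E}[B_T] \leq C T^{2m\beta}$ summed against a convergent series (after enlarging $\beta$ infinitesimally, or using Borel--Cantelli), and then extend from integer $T$ to all $T$ by monotonicity of $T \mapsto B_T$. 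The rest of the argument is the routine GRR computation sketched above.
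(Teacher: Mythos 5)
Your proposal is correct and follows essentially the same route as the paper: the Gaussian moment equivalence combined with Proposition \ref{Holder} to bound $\mathbb{E}|X_t-X_s|^{2m}$, the Garsia--Rodemich--Rumsey inequality to convert this into a pathwise H\"older bound with a $T$-dependent random constant $B_T$, and a summation/Borel--Cantelli argument over integer times plus monotonicity of $T\mapsto B_T$ to extract a single a.s.\ finite random variable independent of $T$. The only slip is cosmetic: with $p(u)=u^{\gamma+1/(2m)}$ the GRR integral requires $\gamma>1/(2m)$ (not merely $\gamma>0$) and yields the exponent $|t-s|^{\gamma-1/(2m)}$ rather than $|t-s|^{\gamma}$, which costs nothing since $m$ can be taken large.
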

\begin{proof}
Recall that
\begin{equation}
\mathbb{E}|X_{t}-X_{s}|^{2}\leq C_{H}|t-s|^{2H}.
\end{equation}
Since $X_{t}$ is Gaussian process, all the norms are equivalent,
%i.e. by lemma \ref{hypercontract} with $q=1$,
and we get that 
\begin{equation}
\mathbb{E}|X_{t}-X_{s}|^{p}\leq C_{p}(\mathbb{E}|X_{t}-X_{s}|^{2})^{\frac{p}{2}}\leq C_{p,H}|t-s|^{pH}\label{pth_moment_X}
\end{equation}
for any $p>2$.

Next, we apply the Garsia-Rodemich-Rumsey inequality. Take $\Psi(x)=x^{p}$
and $p(x)=x^{H}$. Then inequality (\ref{pth_moment_X}) implies that
\[
\mathbb{E}\left(\int_{0}^{T}\int_{0}^{T}\Psi\left(\frac{|X_{t}-X_{s}|}{p(|t-s|)}\right)dsdt\right)\leq C_{p,H}T^{2}.
\]
Define that
\[
B_{T}:=\int_{0}^{T}\int_{0}^{T}\Psi\left(\frac{|X_{t}-X_{s}|}{p(|t-s|)}\right)dsdt=\int_{0}^{T}\int_{0}^{T}\frac{|X_{t}-X_{s}|^{p}}{|t-s|^{pH}}dsdt.
\]
%Then by Chebyshev's inequality, we have \begin{equation*} \mathbb{P}\left(\sup_{n\leq T\leq n+1}B_T/T^q>\varepsilon\right)\leq \mathbb{P}\left(B_{n+1}>\varepsilon n^q\right)\leq \frac{\mathbb{E}(B_{n+1})}{\varepsilon n^q}\leq \frac{C_{p,H}n^2}{\varepsilon n^q}= \frac{C_{p,H}}{\varepsilon n^{q-2}}, \end{equation*} for any integer $n\geq 1$. We can choose $3<q<p$, then \begin{equation*} \sum_{n=0}^{\infty}\mathbb{P}\left(\sup_{n\leq T\leq n+1}B_T/T^q>\varepsilon\right)\leq \sum_{n=0}^{\infty}\frac{C_{p,H}}{\varepsilon n^{q-2}}<\infty. \end{equation*}
%By Borel-Cantelli Lemma, we have \begin{equation*} \frac{B_T}{T^q}\to 0,\ a.s.,\ \mathrm{as}\ T\to\infty. \end{equation*}
Then, for any $q>3$, we get that 
\[
\mathbb{E}\left(\sum_{n=1}^{\infty}\frac{B_{n}}{n^{q}}\right)=\sum_{n=1}^{\infty}\frac{\mathbb{E}(B_{n})}{n^{q}}\leq\sum_{n=1}^{\infty}\frac{Cn^{2}}{n^{q}}<\infty.
\]
Thus there exists an almost surely finite random variable $R$ independent
of $n$ such that
\[
\sum_{n=1}^{\infty}\frac{B_{n}}{n^{q}}\leq R,\ a.s.,
\]
we have
\[
B_{n}\leq Rn^{q},\ a.s.\ \ \ \forall n\geq1,\ q>3.
\]
Take $n=[T]$. Then
\[
B_{T}\leq B_{n+1}\leq R(n+1)^{q}\leq CRT^{q},\ a.s.\ \ \ \forall T>0,\ q>3.
\]
Then the Garsia-Rodemich-Rumsey inequality gives that
\[
\begin{split}\left|X_{t}-X_{s}\right| & \leq8\int_{0}^{|t-s|}\Psi^{-1}\left(\frac{4B_{T}}{u^{2}}\right)p(du)\\
 & \leq C(4B_{T})^{\frac{1}{p}}|t-s|^{H-2/p}\leq CR^{\frac{1}{p}}T^{\frac{q}{p}}|t-s|^{\alpha}
\end{split}
\]
for any $\alpha\in(0,H)$, $p>3\vee\left[\frac{2}{H-\alpha}\right]$
and $3<q<p$. This concludes the lemma. \end{proof}

\begin{rem} When $X_{t}=\sigma\int_{0}^{t}e^{-\lambda(t-s)}dB_{s}^{H}$,
it still satisfies inequality (\ref{Holder-X}). \end{rem}

Additionally, we prove a proposition for a function of fOU processes, which will be applied in Section \ref{sec-DRPE}. Here, we introduce the process $\overline{X}_{t}^{i}$ as follows:
\begin{equation}
\overline{X}_{t}^{i}=\sigma\int_{-\infty}^{t}e^{-\lambda_{i}(t-s)}dB_{s}^{H,i},\ i=1,2,\cdots,d.\label{fOU-indept-stationary}
\end{equation}
It is worth noting that the processes $\{\overline{X}_{t}^{i},t\geq0\}$ are stationary, ergodic, Gaussian
processes, as discussed in \cite{CKM03}.

\begin{prop}\label{Holder-F(X)-lem}
Let $B^{H}=(B^{H,1},B^{H,2},\cdots,B^{H,d})$ be a $d$-dimensional
fBM with $H\in(0,\frac{1}{2})$, and $X=(X^{1},X^{2},\cdots,X^{d})$ a
$d$-dimensional fOU process, where $X_{t}^{i}=\sigma\int_{0}^{t}e^{-\lambda_{i}(t-s)}dB_{s}^{H,i}$,
$\lambda_{i}>0$, $\sigma\in\mathbb{R}$. Define that $F(X_{t}):=X_{t}\otimes X_{t}=(X_{t}^{i}X_{t}^{j})_{i,j=1,2,\cdots,d}$,
and the norm of matrix $A$ as $\|A\|=\sum_{i,j=1}^{d}|a_{ij}|$.
Then there exist a constant $0<\beta<1$, an almost surely finite
random variable $C$ (independent of $T$), and a random variable $R_{T}$
(tends to zero almost surely as $T\to\infty$) such that
\begin{equation}
\sup_{s\neq t}\frac{\left\Vert F(X_{t})-F(X_{s})\right\Vert }{|t-s|^{\alpha}}\leq CR_{T}T^{\beta},\label{Holder-F(X)}
\end{equation}
for any $0\leq s,t\leq T$, and any $\alpha\in(0,H)$. \end{prop}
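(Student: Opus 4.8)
The plan is to reduce the bound on $\|F(X_t)-F(X_s)\|$ to the H\"older regularity of the components $X^i$ already established in Proposition \ref{Holder-X-lem}. Since $F(X_t)-F(X_s) = X_t\otimes X_t - X_s\otimes X_s$, the entrywise identity
\[
X_t^iX_t^j - X_s^iX_s^j = (X_t^i-X_s^i)X_t^j + X_s^i(X_t^j-X_s^j)
\]
gives
\[
\|F(X_t)-F(X_s)\| \le \sum_{i,j}\Bigl(|X_t^i-X_s^i|\,|X_t^j| + |X_s^i|\,|X_t^j-X_s^j|\Bigr)
\le \Bigl(\textstyle\sum_i \|X^i\|_{\infty;[0,T]}\Bigr)\Bigl(\sum_i |X_t^i-X_s^i| + \sum_i|X_t^j - X_s^j|\Bigr),
\]
so up to a dimensional constant,
\[
\frac{\|F(X_t)-F(X_s)\|}{|t-s|^\alpha} \le C_d\,M_T\,\sum_{i=1}^d \frac{|X_t^i-X_s^i|}{|t-s|^\alpha},
\]
where $M_T := \max_i \sup_{0\le u\le T}|X_u^i|$. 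First I would apply Proposition \ref{Holder-X-lem} (and the remark following it, to pass from the stationary version to $X^i_t = \sigma\int_0^t e^{-\lambda_i(t-s)}dB^{H,i}_s$) componentwise: there is an a.s.\ finite random variable $C$ independent of $T$ with $\sup_{s\ne t}|X_t^i - X_s^i|/|t-s|^\alpha \le C\,T^{\beta}$ for each $i$, any $\alpha\in(0,H)$, with $\beta = q/p \in (0,1)$. Combining, the left side of \eqref{Holder-F(X)} is bounded by $C_d\,C\,M_T\,T^{\beta}$.

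It remains to absorb the factor $M_T$ into the claimed form $C R_T T^\beta$ with $R_T\to 0$ a.s. The point is that $M_T$ grows only sub-polynomially: by stationarity of the process $X^i_t = \sigma\int_{-\infty}^t e^{-\lambda_i(t-s)}dB^{H,i}_s$ (whose supremum dominates that of the non-stationary version by a fixed factor, the difference being the deterministic transient $\sigma e^{-\lambda_i t}\int_{-\infty}^0 e^{\lambda_i s}dB^{H,i}_s$ which is a.s.\ bounded in $t$), and Gaussianity, one has $\mathbb{E}\,M_T^p \le C_{p,d}(\log(2+T))^{p/2}$ by a standard maximal-inequality/chaining estimate for stationary Gaussian processes with H\"older-continuous covariance — or, more crudely but sufficiently, one can reuse the Garsia--Rodemich--Rumsey route: $M_T \le |X_0| + \sup_{0\le s\le T}|X_s - X_0| \le |X_0| + C T^{\alpha}\cdot C T^{\beta}$ on the event from Proposition \ref{Holder-X-lem}, which already gives $M_T \le C' T^{\alpha+\beta}$ a.s. Then $C_d\,C\,M_T\,T^{\beta} \le C''\,T^{2\beta+\alpha}$; to land exactly on the form $CR_TT^\beta$ with $R_T\to0$, I would instead choose, in the application of Proposition \ref{Holder-X-lem} to each $X^i$, the H\"older exponent $\alpha$ and the GRR parameters $p,q$ so that the resulting power of $T$ coming from $M_T$ times the two H\"older factors is strictly less than any prescribed $\beta\in(0,1)$; concretely, set $R_T := C_d C M_T T^{\gamma - \beta}$ where $\gamma<\beta$ is the true exponent, which then tends to $0$ a.s.

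The main obstacle is precisely this bookkeeping of exponents: making sure the supremum bound $M_T$, which a priori carries its own positive power of $T$, does not overwhelm the $T^\beta$ budget, and that the various "almost surely finite, $T$-independent" random constants coming from the Borel--Cantelli step in Proposition \ref{Holder-X-lem} can be chosen uniformly in $i$ (there are only finitely many components, so this is routine). Everything else — the algebraic expansion of $X_t\otimes X_t - X_s\otimes X_s$, the triangle inequality, and quoting the componentwise H\"older estimate — is mechanical. A cleaner alternative that avoids the $M_T$ issue entirely is to prove \eqref{Holder-F(X)} directly by the GRR inequality applied to the matrix-valued path $F(X)$ itself: one checks $\mathbb{E}\|F(X_t)-F(X_s)\|^p \le C_{p,H}|t-s|^{pH}$ using the bound above together with $\mathbb{E}|X_t^j|^{2p}<\infty$ (finite by stationarity) and Cauchy--Schwarz, and then the exact same argument as in Proposition \ref{Holder-X-lem} — the Borel--Cantelli sum $\sum_n B_n/n^q$, the bound $B_T \le CRT^q$, and the GRR conclusion — yields \eqref{Holder-F(X)}; I would present this route as the main proof, as it gives the stated form directly with $R_T := (4B_T/T^q)^{1/p} \to 0$ a.s.
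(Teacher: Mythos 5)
Your proposal is correct, but your preferred route is genuinely different from the paper's. The paper follows exactly your \emph{first} skeleton -- the product decomposition $X_t^iX_t^j-X_s^iX_s^j=(X_t^i-X_s^i)X_t^j+X_s^i(X_t^j-X_s^j)$ plus Proposition \ref{Holder-X-lem} componentwise -- but it handles the supremum factor differently: it invokes Theorem 3.1 of Pickands \cite{Pick69} for the stationary version $\overline{X}^i$ to get $(X^i)_T^*/T^{\delta}\to 0$ a.s.\ for \emph{every} $\delta>0$, and then simply takes $R_T:=\sup_i (X^i)_T^*/T^{\delta}$, which is the natural vanishing factor; the total power of $T$ is $\delta+\beta$, made less than $1$ by choice. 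Your crude bound $M_T\le C'T^{\alpha+\beta}$ is weaker, and as you note it forces you to manufacture $R_T$ as a ratio $C_dCM_TT^{\gamma-\beta}$ whose decay is pure exponent bookkeeping rather than a genuine smallness statement; it does satisfy the proposition as literally stated, but the Pickands route is sharper and is reused elsewhere in the paper (Lemma \ref{lem-ondiagonal}). Your ``cleaner alternative'' -- applying Garsia--Rodemich--Rumsey directly to the matrix path $F(X)$, with the moment bound $\mathbb{E}\|F(X_t)-F(X_s)\|^p\le C|t-s|^{pH}$ obtained from Cauchy--Schwarz and the uniform boundedness of $\mathbb{E}|X_t^j|^{2p}$ -- is a valid and self-contained second proof that avoids any supremum estimate; the one point worth spelling out is why your $R_T=(4B_T/T^q)^{1/p}$ tends to zero: since $\sum_n B_n/n^q<\infty$ a.s.\ the terms $B_n/n^q\to 0$, and $B_T$ is nondecreasing in $T$, so $B_T/T^q\le B_{[T]+1}/[T]^q\to 0$. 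With that observation both of your routes close, and either would serve as a proof of \eqref{Holder-F(X)}.
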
 \begin{proof}
First, we present a fact about supremum of the one dimensional, stationary
fOU process $(\overline{X}^{i})_{t}^{*}:=\sup_{0\leq s\leq t}|\overline{X}_{s}^{i}|$
below. Since we know that $\overline{X}^{i}$ and $-\overline{X}^{i}$
have the same distribution, and their covariance function is
\begin{equation}
r_{i}(t)=\mathrm{Cov}(\overline{X}_{s+t}^{i},\overline{X}_{s}^{i})=C\left(1-\frac{\lambda_{i}^{2H}}{G(2H+1)}t^{2H}+o(t^{2H})\right)
\end{equation}
for $t$ small, where $C=\sigma^{2}\lambda_{i}^{-2H}HG(2H)$ and $G(\cdot)$
is Gamma function, by Theorem 3.1 of Pickands \cite{Pick69}, we
know that, for $t$ tending to infinity
\[
\frac{1}{t^{\delta}}\sup_{0\leq s\leq t}\overline{X}_{s}^{i}\to0,\ a.s.,\ \ \frac{1}{t^{\delta}}\sup_{0\leq s\leq t}(-\overline{X}_{s}^{i})\to0,\ a.s.
\]
for any $\delta>0$. Since $(\overline{X}^{i})_{t}^{*}=(\sup_{0\leq s\leq t}\overline{X}_{s}^{i})\vee(\sup_{0\leq s\leq t}(-\overline{X}_{s}^{i}))$,
then
\begin{equation}
\frac{(\overline{X}^{i})_{t}^{*}}{t^{\delta}}\to0,\ a.s.\label{ineq-Pickand}
\end{equation}
Since $X_{t}^{i}=\overline{X}_{t}^{i}-e^{-\lambda t}\overline{X}_{0}^{i}$,
we also have that $\frac{(X^{i})_{t}^{*}}{t^{\delta}}\to0\ a.s.$, where
$(X^{i})_{t}^{*}:=\sup_{0\leq s\leq t}|X_{s}^{i}|$.

Now define $R_{t}=\sup_{i=1,\cdots,d}\frac{(X^{i})_{t}^{*}}{t^{\delta}}$,
then $R_{t}\to0\ a.s.$ as $t\to\infty$. For any $i,j=1,2,\cdots,d$,
and $0\leq s,t\leq T$,
\[
\begin{split}|X_{t}^{i}X_{t}^{j}-X_{s}^{i}X_{s}^{j}| & =|(X_{t}^{i}-X_{s}^{i})X_{t}^{j}+X_{s}^{i}(X_{t}^{j}-X_{s}^{j})|\\
 & \leq|X_{t}^{j}||X_{t}^{i}-X_{s}^{i}|+|X_{s}^{i}||X_{t}^{j}-X_{s}^{j}|\\
 & \leq(X^{j})_{T}^{*}|X_{t}^{i}-X_{s}^{i}|+(X^{i})_{T}^{*}|X_{t}^{j}-X_{s}^{j}|\\
 & \leq CR_{T}T^{\delta}T^{\beta}|t-s|^{\alpha}+CR_{T}T^{\delta}T^{\beta}|t-s|^{\alpha}\\
 & \leq CR_{T}T^{\delta+\beta}|t-s|^{\alpha},
\end{split}
\]
where the second last inequality follows from Proposition \ref{Holder-X-lem}.
One can choose $\delta,\beta$ such that $0<\delta+\beta=:\beta'<1$. This completes the proof of the statement. \end{proof}

\subsubsection{L\'evy area of multi-dimensional fOU processes}

\label{holder-levy-subsec} In this subsection, let $B^{H}=(B^{H,1},B^{H,2},\cdots,B^{H,d})$
be a $d$-dimensional fBM with $H\in(\frac{1}{3},\frac{1}{2})$, and $X=(X^{1},X^{2},\cdots,X^{d})$
a $d$-dimensional fOU process, where $X_{t}^{i}=\sigma\int_{-\infty}^{t}e^{-\lambda_{i}(t-s)}dB_{s}^{H,i}$,
$\lambda_{i}>0$, $\sigma\in\mathbb{R}$. Then $X=(X^{1},X^{2},\cdots,X^{d})$
is stationary (see \cite{CKM03}), and covariance function is given
by
\[
R_{X}(s,t)=\mathrm{diag}(R_{1}(s,t),\cdots,R_{d}(s,t)),
\]
where $R_{i}(s,t)=\mathbb{E}(X_{s}^{i}X_{t}^{i})$.

In this subsection, we will show one estimate for off-diagonal elements
of the L\'evy area $\int_{0}^{t}X_{u}^{i}\circ d_{\mathfrak{R}_{1}}\mathbf{X}^{j}$
of the multi-dimensional fOU process $X$. We denote Stratonovich's
L\'evy area of the fOU process $X$ as
\[
A(t):=\int_{0}^{t}X_{u}\circ d_{\mathfrak{R}_{1}}\mathbf{X}=\left(\int_{0}^{t}X_{u}^{i}\circ d_{\mathfrak{R}_{1}}\mathbf{X}^{j}\right)_{i,j=1,2,\cdots,d},
\]
and $A_{ij}(t)$ as its components.

%When $i=j$ (on-diagonal case), by fundamental principle of Stratonovich calculus, $$A_{ii}(t)=\int_0^tX^i_u\circ d_{\mathfrak{R}_1}\mathbf{X}^i=\frac{1}{2}(X^i_t)^2-\frac{1}{2}(X^i_0)^2,$$ the $\alpha$-H\"older regularity of this case is illustrated in Proposition \ref{Holder-F(X)-lem}.

Before showing the estimate of off-diagonal elements, we recall a
lemma based on Wiener chaos. We denote $\mathcal{H}_{n}(\mathbb{P})$
as the homogeneous Wiener chaos of order $n$ and $\mathcal{C}^{n}(\mathbb{P}):=\oplus_{j=0}^{n}\mathcal{H}_{j}(\mathbb{P})$
the Wiener chaos (or non-homogeneous chaos) of order $n$. The lemma
below gives the hypercontractivity of Wiener chaos.

\begin{lem}\label{hypercontract}(Refer to, e.g., Lemma 15.21, \cite{FV10})
Let $q\in\mathbb{N}$ and $Z\in\mathcal{C}^{q}(\mathbb{P})$. Then,
for $p>2$,
\begin{equation}
(\mathbb{E}|Z|^{2})^{\frac{1}{2}}\leq(\mathbb{E}|Z|^{p})^{\frac{1}{p}}\leq(q+1)(p-1)^{\frac{q}{2}}(\mathbb{E}|Z|^{2})^{\frac{1}{2}}.
\end{equation}
\end{lem}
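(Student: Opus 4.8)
The plan is to separate the two inequalities in the statement: the left-hand one is elementary and uses nothing about the chaos structure, while the right-hand one is handled by reducing to the homogeneous case and then appealing to Nelson's hypercontractivity of the Ornstein--Uhlenbeck semigroup.

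First I would dispose of $(\mathbb{E}|Z|^{2})^{1/2}\leq(\mathbb{E}|Z|^{p})^{1/p}$: on a probability space and for $p>2$ this is just monotonicity of $L^{r}$-norms (equivalently, Jensen's inequality applied to the convex function $x\mapsto|x|^{p/2}$). For the upper bound I would write $Z=\sum_{j=0}^{q}Z_{j}$ with $Z_{j}$ the projection of $Z$ onto the $j$-th homogeneous chaos $\mathcal{H}_{j}(\mathbb{P})$, observe that orthogonality of the $\mathcal{H}_{j}$ in $L^{2}$ gives $(\mathbb{E}|Z_{j}|^{2})^{1/2}\leq(\mathbb{E}|Z|^{2})^{1/2}$ for each $j$, and apply the triangle inequality in $L^{p}$,
\[
(\mathbb{E}|Z|^{p})^{1/p}\leq\sum_{j=0}^{q}(\mathbb{E}|Z_{j}|^{p})^{1/p}.
\]
Thus it suffices to prove the homogeneous estimate $(\mathbb{E}|W|^{p})^{1/p}\leq(p-1)^{n/2}(\mathbb{E}|W|^{2})^{1/2}$ for $W\in\mathcal{H}_{n}(\mathbb{P})$; summing this over $j=0,\dots,q$, bounding $(p-1)^{j/2}\leq(p-1)^{q/2}$, and using the orthogonality remark above produces exactly the factor $(q+1)(p-1)^{q/2}$.

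To obtain the homogeneous estimate I would bring in the Ornstein--Uhlenbeck semigroup $(T_{t})_{t\geq0}$ on Wiener space, which acts on $\mathcal{H}_{n}(\mathbb{P})$ as multiplication by the scalar $e^{-nt}$, and invoke Nelson's theorem: $T_{t}\colon L^{2}\to L^{p}$ is a contraction as soon as $e^{2t}\geq p-1$. Choosing $t$ with $e^{2t}=p-1$ and applying this to $W\in\mathcal{H}_{n}(\mathbb{P})$ gives
\[
(\mathbb{E}|W|^{p})^{1/p}=e^{nt}(\mathbb{E}|T_{t}W|^{p})^{1/p}\leq e^{nt}(\mathbb{E}|W|^{2})^{1/2}=(p-1)^{n/2}(\mathbb{E}|W|^{2})^{1/2},
\]
which is the required bound.

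The hard part is of course Nelson's hypercontractivity inequality itself. If one wanted a self-contained argument rather than quoting \cite{FV10}, the standard route is to prove the sharp two-point inequality on $\{-1,1\}$ with the appropriate noise operator, tensorize it to the discrete cube $\{-1,1\}^{N}$, and pass to the Gaussian limit via the central limit theorem; alternatively one deduces it from Gross's logarithmic Sobolev inequality for the Gaussian measure through a differential inequality along the semigroup. Since the cited reference already records this, in the paper I would simply state the lemma and refer to it.
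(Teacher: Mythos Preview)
Your proposal is correct, and in fact you anticipated the paper's treatment exactly: the paper gives no proof at all for this lemma, it simply states it with the reference to \cite{FV10}. Your sketch (monotonicity of $L^{r}$-norms for the lower bound; decomposition into homogeneous chaoses, triangle inequality in $L^{p}$, and Nelson's hypercontractivity via the Ornstein--Uhlenbeck semigroup for the upper bound) is the standard route and reproduces the stated constant $(q+1)(p-1)^{q/2}$.
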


Now we illustrate one estimate for off-diagonal elements, i.e., when
$i\neq j$, we have  
\begin{prop}\label{Holder-prop}
Let $X=(X^{1},\cdots,X^{d})$ be a $d$-dimensional, stationary fOU
process with $H\in(\frac{1}{3},\frac{1}{2})$, and let $A_{ij}(t)=\int_{0}^{t}X_{u}^{i}\circ d_{\mathfrak{R}_{1}}\mathbf{X}^{j},i\neq j$
be the off-diagonal elements of Stratonovich's L\'evy area of $X$.
Then there exist $0<\beta<1$ and an almost surely finite random variable
$\widetilde{C}$ such that
\begin{equation}
|A_{ij}(t)-A_{ij}(s)|\leq\widetilde{C}n^{\beta},\ a.s.
\end{equation}
for any $s,t\in[n-1,n]$ and any integer $n\geq1$. \end{prop}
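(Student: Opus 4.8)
The plan is to show that on every interval $[n-1,n]$ the path $t\mapsto A_{ij}(t)$ is H\"older of any order $\alpha<H$ with a controlled random H\"older constant, and then to use the stationarity of $X$ together with a Borel--Cantelli argument to propagate the bound across the intervals. Fix $i\neq j$. Using Chen's identity and the definition of the first-level rough integral one gets, for $0\le s<t$,
\[
A_{ij}(t)-A_{ij}(s)=\int_s^t X_u^i\circ d_{\mathfrak{R}_1}\mathbf{X}^j=X_s^i\,(X_t^j-X_s^j)+\mathbb{X}_{s,t}^{ij},
\]
where $\mathbb{X}_{s,t}^{ij}=\int_s^t X_{s,u}^i\circ d_{\mathfrak{R}_1}\mathbf{X}^j$ is the genuine second-level increment of the fOU rough path (off-diagonal, hence with no drift correction). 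Both summands are measurable functionals of $X|_{[s,t]}$, and since $X^i,X^j$ are centred Gaussian (independent for $i\neq j$), the first term lies in $\mathcal{C}^2(\mathbb{P})$, while $\mathbb{X}_{s,t}^{ij}$, being an $L^2$-limit of Riemann sums $\sum_k X_{s,u_k}^i\,X_{u_k,u_{k+1}}^j$, also lies in $\mathcal{C}^2(\mathbb{P})$; thus $A_{ij}(t)-A_{ij}(s)\in\mathcal{C}^2(\mathbb{P})$.

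Next I would establish the second-moment bound
\[
\mathbb{E}\,|A_{ij}(t)-A_{ij}(s)|^2\le C_H\,|t-s|^{2H},\qquad 0\le s,t\le 1 .
\]
For $X_s^i(X_t^j-X_s^j)$ this follows from the independence of $X^i,X^j$, the stationarity of $X^i$, and Proposition~\ref{Holder}, which give $\mathbb{E}|X_s^i|^2\,\mathbb{E}|X_t^j-X_s^j|^2\le C_H|t-s|^{2H}$. For $\mathbb{X}_{s,t}^{ij}$ one invokes the standard $L^2$ estimate for second-level iterated integrals of a Gaussian process whose covariance has finite $\rho$-variation in the $2D$ sense with $\rho<2$; here $\rho=\tfrac1{2H}<\tfrac32$ since $H\in(\tfrac13,\tfrac12)$, and Corollary~\ref{Variation-cor} supplies $|R_X|_{\frac1{2H}\text{-var};[s,t]^2}\le C(H)|t-s|^{2H}$, whence $\mathbb{E}|\mathbb{X}_{s,t}^{ij}|^2\le C(H)|t-s|^{4H}\le C(H)|t-s|^{2H}$. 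Combining the two and applying the hypercontractivity of Wiener chaos (Lemma~\ref{hypercontract} with $q=2$) yields, for every $p\ge2$,
\[
\mathbb{E}\,|A_{ij}(t)-A_{ij}(s)|^{p}\le C_{p,H}\,|t-s|^{pH},\qquad 0\le s,t\le1 .
\]

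I would then run the Garsia--Rodemich--Rumsey inequality on $[0,1]$ with $\Psi(x)=x^p$ and $p(u)=u^\gamma$, choosing $p$ large and $\tfrac2p<\gamma<H$. With $B_1:=\int_0^1\!\int_0^1 |A_{ij}(t)-A_{ij}(s)|^p|t-s|^{-\gamma p}\,ds\,dt$ one has $\mathbb{E}(B_1)\le C_{p,H}\int_0^1\!\int_0^1|t-s|^{(H-\gamma)p}\,ds\,dt<\infty$, and the inequality gives $\sup_{s,t\in[0,1]}|A_{ij}(t)-A_{ij}(s)|\le C\,B_1^{1/p}$; hence $M_1:=\sup_{s,t\in[0,1]}|A_{ij}(t)-A_{ij}(s)|$ has finite moments of all orders. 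Since $X$ is stationary and, as noted, the family $\{A_{ij}(t)-A_{ij}(s):s,t\in[n-1,n]\}$ is a fixed measurable functional of $X|_{[n-1,n]}$, the variable $M_n:=\sup_{s,t\in[n-1,n]}|A_{ij}(t)-A_{ij}(s)|$ satisfies $M_n\stackrel{d}{=}M_1$ for all $n\ge1$. Fix any $\beta\in(0,1)$ and pick $r$ with $r\beta>1$; then $\sum_{n\ge1}\mathbb{P}(M_n>n^\beta)=\sum_{n\ge1}\mathbb{P}(M_1>n^\beta)\le \mathbb{E}(M_1^r)\sum_{n\ge1}n^{-r\beta}<\infty$, so by Borel--Cantelli $M_n\le n^\beta$ for all large $n$ a.s. Setting $\widetilde C:=\sup_{n\ge1}M_n/n^\beta$, which is a.s. finite, we obtain $|A_{ij}(t)-A_{ij}(s)|\le\widetilde C\,n^\beta$ for all $s,t\in[n-1,n]$ and all $n\ge1$.

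The step I expect to be the main obstacle is the uniform $L^2$ control of the reduced L\'evy area $\mathbb{X}_{s,t}^{ij}$: one must phrase the Gaussian iterated-integral estimate so that it quotes the $\tfrac1{2H}$-variation bound of Corollary~\ref{Variation-cor} \emph{uniformly in the base point}, rather than merely the qualitative finiteness of the $2D$ variation. A secondary point needing care is justifying $M_n\stackrel{d}{=}M_1$, i.e. that the canonical (Stratonovich) rough-path lift of $X$ over $[n-1,n]$ is a measurable functional of $X|_{[n-1,n]}$ alone, so that stationarity of $X$ transfers to the L\'evy-area increments; this is where one uses piecewise-linear (rather than mollified) approximations of $X$.
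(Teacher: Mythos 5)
Your proposal is correct, and the core of it coincides with the paper's argument: both reduce the problem to the second--moment bound $\mathbb{E}|A_{ij}(t)-A_{ij}(s)|^{2}\leq C|t-s|^{2H}$ on unit intervals, obtained from the finite $\frac{1}{2H}$-variation of $R_{X}$ (Corollary \ref{Variation-cor}) via the 2D Young--L\'oeve--Towghi estimate, and then upgrade to $L^{p}$ by hypercontractivity and to a pathwise bound by Garsia--Rodemich--Rumsey. Your decomposition $A_{ij}(t)-A_{ij}(s)=X_{s}^{i}X_{s,t}^{j}+\mathbb{X}_{s,t}^{ij}$ is just the pathwise form of the paper's splitting of $\int_{s}^{t}\int_{s}^{t}R_{i}\,dR_{j}$ into the four terms $I$--$IV$ (your reduced area gives $I$, the product term gives $IV$, and the cross terms $II$, $III$ are absorbed when you square the sum), so nothing is lost there. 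The genuine divergence is in the globalization step. The paper keeps the GRR functional $B_{n}$ on each block, uses the uniform bound $\mathbb{E}(B_{n})\leq C$ to get $\sum_{n}B_{n}/n^{q}<\infty$ a.s., and reads off $|A_{ij}(t)-A_{ij}(s)|\leq CR^{1/p}n^{q/p}$; this never needs to compare the laws of the L\'evy area over different blocks. You instead invoke stationarity to claim $M_{n}\overset{d}{=}M_{1}$ and run Borel--Cantelli on the tail of $M_{1}$. That works, and it even gives the conclusion for every $\beta\in(0,1)$, but it carries the extra burden you yourself flag: one must verify that the Stratonovich second-level increment over $[n-1,n]$ is a shift-equivariant measurable functional of $X|_{[n-1,n]}$ alone (via Chen's identity and the a.s. partition-independence of the Coutin--Qian dyadic lift). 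The paper's route is slightly more robust precisely because it only uses moment bounds that are uniform in $n$, not equality of distributions; your route is slightly cleaner probabilistically once the equivariance is in place. Either way the statement follows.
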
 \begin{proof}
First, we rewrite $R_{i}(s,t)$ as
\[
R_{i}\binom{\ s\ }{\ t\ }=\mathbb{E}X_{s}^{i}X_{t}^{i},
\]
and denote that 
\[
R_{i}\binom{s}{u,v}=\mathbb{E}X_{s}^{i}X_{u,v}^{i},\ R_{i}\binom{s,t}{u}=\mathbb{E}X_{s,t}^{i}X_{u}^{i},\ R_{i}\binom{s,t}{u,v}=\mathbb{E}X_{s,t}^{i}X_{u,v}^{i}.
\]
For the second moment of the L\'evy area,
\[
\begin{split}\mathbb{E}\left(\left|\int_{s}^{t}X_{u}^{i}\circ d_{\mathfrak{R}_{1}}\mathbf{X}^{j}\right|^{2}\right) & =\mathbb{E}\left(\int_{s}^{t}\int_{s}^{t}X_{u}^{i}X_{v}^{i}\circ d_{\mathfrak{R}_{1}}\mathbf{X}^{j}\circ d_{\mathfrak{R}_{1}}\mathbf{X}^{j}\right)\\
 & =\int_{s}^{t}\int_{s}^{t}\mathbb{E}(X_{u}^{i}X_{v}^{i})d\mathbb{E}(X_{u}^{j}X_{v}^{j})\\
 & =\int_{s}^{t}\int_{s}^{t}R_{i}\binom{\ u\ }{\ v\ }dR_{j}\binom{\ u\ }{\ v\ },
\end{split}
\]
where the integral which appears on the right hand side above can
be viewed as a 2-dimensional (2D) Young's integral (see e.g. Section
6.4 of Friz and Victoir \cite{FV10}). Then we have that 
\[
\begin{split}\int_{s}^{t}\int_{s}^{t}R_{i}\binom{\ u\ }{\ v\ }dR_{j}\binom{\ u\ }{\ v\ } & =\int_{s}^{t}\int_{s}^{t}R_{i}\binom{s,u}{s,v}dR_{j}\binom{\ u\ }{\ v\ }+\int_{s}^{t}\int_{s}^{t}R_{i}\binom{s,u}{s}dR_{j}\binom{\ u\ }{\ v\ }\\
 & \ \ \ +\int_{s}^{t}\int_{s}^{t}R_{i}\binom{s}{s,v}dR_{j}\binom{\ u\ }{\ v\ }+R_{i}\binom{\ s\ }{\ s\ }\int_{s}^{t}\int_{s}^{t}dR_{j}\binom{\ u\ }{\ v\ }\\
 & =:I+II+III+IV.
\end{split}
\]

For the first term $I$, by the Young-Lo\`{e}ve-Towghi inequality (see e.g.
Theorem 6.18 of \cite{FV10}), we have that
\[
\begin{split}I & \leq C|R_{i}|_{\frac{1}{2H}-var;[s,t]^{2}}|R_{j}|_{\frac{1}{2H}-var;[s,t]^{2}}\\
 & \leq C\max\{|R_{i}|_{\frac{1}{2H}-var;[s,t]^{2}}^{2},|R_{j}|_{\frac{1}{2H}-var;[s,t]^{2}}^{2}\}.
\end{split}
\]
Then, by Corollary \ref{Variation-cor}, we have that
\begin{equation}
I=\int_{s}^{t}\int_{s}^{t}R_{i}\binom{s,u}{s,v}dR_{j}\binom{\ u\ }{\ v\ }\leq C|t-s|^{4H}.\label{Holder-1}
\end{equation}

For the second term $II$, by the Young 1D estimate (see e.g. Theorem
6.8 of \cite{FV10}), we have
\[
\begin{split}II & =\int_{s}^{t}R_{i}\binom{s,u}{s}dR_{j}\binom{u}{s,t}\\
 & \leq C\left|R_{i}\binom{\ \cdot\ }{\ s\ }\right|_{\frac{1}{2H}-var;[s,t]}\left|R_{j}\binom{\cdot}{s,t}\right|_{\frac{1}{2H}-var;[s,t]},
\end{split}
\]
where
\[
\begin{split}\left|R_{i}\binom{\ \cdot\ }{\ s\ }\right|_{\frac{1}{2H}-var;[s,t]}^{\frac{1}{2H}} & =\sup_{\mathcal{P}}\sum_{\ell}\left|R_{i}\binom{\ t_{\ell+1}\ }{\ s\ }-R_{i}\binom{\ t_{\ell}\ }{\ s\ }\right|^{\frac{1}{2H}}\\
 & =\sup_{\mathcal{P}}\sum_{\ell}\left|r_{i}(t_{\ell+1}-s)-r_{i}(t_{\ell}-s)\right|^{\frac{1}{2H}}\\
 & \leq\sup_{\mathcal{P}}\sum_{\ell}C_{H}\left|t_{\ell+1}-t_{\ell}\right|\leq C_{H}|t-s|,
\end{split}
\]
and
\[
\begin{split}\left|R_{j}\binom{\cdot}{s,t}\right|_{\frac{1}{2H}-var;[s,t]}^{\frac{1}{2H}} & =\sup_{\mathcal{P}}\sum_{\ell}\left|R_{j}\binom{\ t_{\ell+1}\ }{s,t}-R_{j}\binom{\ t_{\ell}\ }{s,t}\right|^{\frac{1}{2H}}\\
 & =\sup_{\mathcal{P}}\sum_{\ell}\left|\mathbb{E}(X_{t_{\ell},t_{\ell+1}}^{j}X_{s,t}^{j})\right|^{\frac{1}{2H}}\leq\left|R_{j}\right|_{\frac{1}{2H}-var;[s,t]^{2}}^{\frac{1}{2H}}.
\end{split}
\]
In above estimate, function $r_{i}$ is the covariance $r_{i}(t)=\mathbb{E}(X_{s}^{i}X_{s+t}^{i})$.
Thus, we have that 
\begin{equation}
II=\int_{s}^{t}\int_{s}^{t}R_{i}\binom{s,u}{s}dR_{j}\binom{\ u\ }{\ v\ }\leq C|t-s|^{4H}.\label{Holder-2}
\end{equation}

The third term $III$ is the same as the second term $II$
line by line, so
\begin{equation}
III=\int_{s}^{t}\int_{s}^{t}R_{i}\binom{s}{s,v}dR_{j}\binom{\ u\ }{\ v\ }\leq C|t-s|^{4H}.\label{Holder-3}
\end{equation}

For the last term $IV$,
\begin{equation}
\begin{split}IV & =R_{i}\binom{\ s\ }{\ s\ }\left(R_{j}\binom{\ t\ }{\ t\ }-R_{j}\binom{\ s\ }{\ t\ }-R_{j}\binom{\ t\ }{\ s\ }+R_{j}\binom{\ s\ }{\ s\ }\right)\\
 & =r_{i}(0)(2r_{j}(0)-2r_{j}(t-s))\leq C|t-s|^{2H}.
\end{split}
\label{Holder-4}
\end{equation}

Now, combining inequalities (\ref{Holder-1}), (\ref{Holder-2}), (\ref{Holder-3})
and (\ref{Holder-4}), we get that
\begin{equation}
\mathbb{E}\left(\left|\int_{s}^{t}X_{u}^{i}\circ d_{\mathfrak{R}_{1}}\mathbf{X}^{j}\right|^{2}\right)\leq C|t-s|^{4H}+C|t-s|^{2H}.\label{Holder-5}
\end{equation}
Let $s<t$ and that $s,t\in[n-1,n]$, we have that 
\[
\mathbb{E}\left(\left|\int_{s}^{t}X_{u}^{i}\circ d_{\mathfrak{R}_{1}}\mathbf{X}^{j}\right|^{2}\right)\leq C|t-s|^{2H}.
\]

Now we turn to prove the estimate, for arbitrary $p\geq2$, by the
hypercontractivity of Wiener chaos (see Lemma \ref{hypercontract}),
we further have that 
\[
\begin{split}\mathbb{E}[|A_{ij}(t)-A_{ij}(s)|^{p}] & =\mathbb{E}\left(\left|\int_{s}^{t}X_{u}^{i}\circ d_{\mathfrak{R}_{1}}\mathbf{X}^{j}\right|^{p}\right)\\
 & \leq3^{p}(p-1)^{p}\left(\mathbb{E}\left|\int_{s}^{t}X_{u}^{i}\circ d_{\mathfrak{R}_{1}}\mathbf{X}^{j}\right|^{2}\right)^{\frac{p}{2}}\\
 & \leq C|t-s|^{pH}.
\end{split}
\]
Taking that $\Psi(x)=x^{p}$ and $p(x)=x^{H}$, the above inequality implies
that
\[
\mathbb{E}\left(\int_{n}^{n+1}\int_{n}^{n+1}\Psi\left(\frac{|A_{ij}(t)-A_{ij}(s)|}{p(|t-s|)}\right)dsdt\right)\leq C.
\]
Define that 
\[
B_{n}:=\int_{n}^{n+1}\int_{n}^{n+1}\Psi\left(\frac{|A_{ij}(t)-A_{ij}(s)|}{p(|t-s|)}\right)dsdt=\int_{n}^{n+1}\int_{n}^{n+1}\frac{|A_{ij}(t)-A_{ij}(s)|^{p}}{|t-s|^{pH}}dsdt.
\]
Then, for any $q>1$, we get that 
\[
\mathbb{E}\left(\sum_{n=1}^{\infty}\frac{B_{n}}{n^{q}}\right)=\sum_{n=1}^{\infty}\frac{\mathbb{E}(B_{n})}{n^{q}}\leq\sum_{n=1}^{\infty}\frac{C}{n^{q}}<\infty.
\]
Thus there exists an almost surely finite random variable $R$ independent
of $n$ such that
\[
\sum_{n=1}^{\infty}\frac{B_{n}}{n^{q}}\leq R,\ a.s.,
\]
we have that 
\begin{equation}
B_{n}\leq Rn^{q},\ a.s.\ \ \ \forall n\geq1,\ q>1.
\end{equation}
Appling the Garsia-Rodemich-Rumsey inequality, for any $n-1\leq s<t\leq n$,
we get
\[
\begin{split}\left|A_{ij}(t)-A_{ij}(s)\right| & \leq8\int_{0}^{|t-s|}\Psi^{-1}\left(\frac{4B_{n}}{u^{2}}\right)p(du)\leq8H\int_{0}^{1}\left(\frac{4B_{n}}{u^{2}}\right)^{\frac{1}{p}}u^{H-1}du\\
 & =\frac{8H}{H-2/p}(4B_{n})^{\frac{1}{p}}\leq CR^{\frac{1}{p}}n^{\frac{q}{p}},\ \ \ a.s.
\end{split}
\]
for any $p>\frac{2}{H}$ and $1<q<p$. Thus we have completed the proof.
\end{proof}

\subsection{Long time asymptotic of L\'evy area}

\label{longtime-subsec}

For this subsection, we consider the multi-dimensional fOU process, 
which is the solution to the stochastic differential equation
\begin{equation}
dX_{t}=-\Gamma X_{t}dt+\sigma dB_{t}^{H},\ X_{0}=0,\label{fOU-h-2}
\end{equation}
where $\Gamma$ is a symmetric, positive-definite matrix, $\sigma$
is a constant, and $B^{H}=(B^{H,1},B^{H,2},\cdots,B^{H,d})$ is a
$d$-dimensional fBM. Our aim in this section is to show a long time
asymptotic property of L\'evy area $A(t)=\int_{0}^{t}X_{s}\circ d_{\mathfrak{R}_{1}}\mathbf{X}$
of fOU processes $X$ to show that 
\[
\frac{1}{t}A(t)=\frac{1}{t}\int_{0}^{t}X_{s}\circ d_{\mathfrak{R}_{1}}\mathbf{X}\to0,\ a.s.
\]
as $t$ goes to infinity.

The components of the solution for the process $X$ are not independent since
they interact between each other. We first make an orthogonal transformation
for this dynamical system. Since the drift matrix $\Gamma$ is symmetric
and positive-definite, there exists an orthogonal matrix $\overline{\Sigma}$
such that
\begin{equation}
\overline{\Sigma}\Gamma\overline{\Sigma}^{T}=\Lambda,
\end{equation}
where $\Lambda=\mathrm{diag}\{\lambda_{1},\cdots,\lambda_{d}\}$ and
$0<\lambda_{1}\leq\cdots\leq\lambda_{d}$.

Define that $\widetilde{X}_{t}:=\overline{\Sigma}X_{t}$, and $\widetilde{B}_{t}^{H}:=\overline{\Sigma}B_{t}^{H}$.
Since $\overline{\Sigma}$ is an orthogonal matrix, $\widetilde{B}_{t}^{H}$
is still a $d$-dimensional fBM with the Hurst parameter $H$. The stochastic
differential equation (\ref{fOU-h-2}) becomes
\begin{equation}
d\widetilde{X}_{t}=-\Lambda\widetilde{X}_{t}dt+\sigma d\widetilde{B}_{t}^{H}.
\end{equation}
Now the fOU process $\widetilde{X}_{t}$ has independent components, so that
\[
\int_{0}^{t}X_{s}\circ d_{\mathfrak{R}_{1}}\mathbf{X}=\overline{\Sigma}^{T}\left(\int_{0}^{t}\widetilde{X}_{s}\circ d_{\mathfrak{R}_{1}}\mathbf{\widetilde{X}}\right)\overline{\Sigma}.
\]
What we should prove, therefore, is that,
\[
\frac{1}{t}\int_{0}^{t}\widetilde{X}_{s}\circ d_{\mathfrak{R}_{1}}\mathbf{\widetilde{X}}\to0,\ a.s.
\]
as $t$ goes to infinity.

We may ignore the symbol tilde and use $X,B^{H}$ to denote $\widetilde{X}$
and $\widetilde{B}^{H}$, respectively, for simplicity. Now the $d$-dimensional
fOU process $X=(X^{1},X^{2},\cdots,X^{d})$ has independent components
and satisfies that 
\begin{equation}
X_{t}^{i}=\sigma\int_{0}^{t}e^{-\lambda_{i}(t-s)}dB_{s}^{H,i},\ i=1,2,\cdots,d.\label{fOU-indept}
\end{equation}

% Define
% \begin{equation}
% \overline{X}_{t}^{i}=\sigma\int_{-\infty}^{t}e^{-\lambda_{i}(t-s)}dB_{s}^{H,i},\ i=1,2,\cdots,d.\label{fOU-indept-stationary}
% \end{equation}
% Then $\{\overline{X}_{t}^{i},t\geq0\}$ are stationary, ergodic, Gaussian
% processes, see \cite{CKM03}.

\subsubsection{On-diagonal case}

\begin{lem}\label{lem-ondiagonal} For the on-diagonal components
of L\'evy area $A(t)=\int_{0}^{t}X_{s}\circ d_{\mathfrak{R}_{1}}\mathbf{X}$,
we have that 
\begin{equation}
\frac{1}{t}A_{ii}(t)=\frac{1}{t}\int_{0}^{t}X_{s}^{i}\circ d_{\mathfrak{R}_{1}}\mathbf{X}^{i}\to0,\ a.s.,\ \forall\ i=1,2,\cdots,d.
\end{equation}
as $t$ tends to infinity.
\end{lem}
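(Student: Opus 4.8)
The plan is to reduce the on-diagonal L\'evy area to a simple functional of the single path $X^i$ and then apply the sub-polynomial growth bound for stationary fOU already established above.

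\emph{First}, I would use that the rough path $\mathbf{X}$ is the canonical \emph{geometric} enhancement of the fOU process, so its second-level tensor obeys the shuffle relation and in particular $\mathbb{X}_{s,t}^{ii}=\tfrac12(X_{s,t}^i)^2$. Plugging this into the defining limit (\ref{level-1}) for $A_{ii}(t)=\int_0^t X_u^i\circ d_{\mathfrak{R}_1}\mathbf{X}^i$, each summand becomes $X_s^i X_{s,t}^i+\tfrac12(X_{s,t}^i)^2=\tfrac12\bigl((X_t^i)^2-(X_s^i)^2\bigr)$, so the Riemann-type sum telescopes (it is in fact partition-independent) and, since $X_0=0$ by (\ref{fOU-h-2}),
\[
A_{ii}(t)=\tfrac12\bigl((X_t^i)^2-(X_0^i)^2\bigr)=\tfrac12(X_t^i)^2 .
\]
Equivalently, this is just the fundamental theorem of calculus for rough integrals applied to the gradient one-form $x\mapsto x$ with primitive $\tfrac12 x^2$. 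This reduces the claim to showing $\tfrac1t(X_t^i)^2\to0$ a.s.

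\emph{Second}, I would reuse the estimate from the proof of Proposition \ref{Holder-F(X)-lem}: writing $X_t^i=\overline{X}_t^i-e^{-\lambda_i t}\overline{X}_0^i$ with $\overline{X}^i$ the stationary version, Pickands' theorem \cite{Pick69} gives $t^{-\delta}\sup_{0\le s\le t}|X_s^i|\to 0$ a.s. for every $\delta>0$. Choosing $\delta\in(0,\tfrac12)$,
\[
\frac1t|A_{ii}(t)|=\frac{1}{2t}(X_t^i)^2\le\frac{t^{2\delta-1}}{2}\Bigl(\frac{\sup_{0\le s\le t}|X_s^i|}{t^{\delta}}\Bigr)^2\longrightarrow 0\quad\text{a.s.},
\]
since $2\delta-1<0$; and this holds for each $i=1,\dots,d$.

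The only step that needs a little care is the identification $A_{ii}(t)=\tfrac12(X_t^i)^2$: one must confirm that, using the Stratonovich (geometric) enhancement, the diagonal entry of the iterated integral really is the symmetric expression and that no stray boundary term $X_0^iX_{0,t}^i$ is left over --- but under $X_0=0$ the "increment" and "full-value" conventions coincide, so this is immediate. Beyond that, the lemma is an elementary consequence of the Pickands growth estimate already proven; in contrast with the off-diagonal components, no Wiener-chaos/Garsia--Rodemich--Rumsey input is required, which is why the on-diagonal case is isolated as a separate (easy) lemma.
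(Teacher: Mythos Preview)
Your proof is correct and follows essentially the same route as the paper: identify $A_{ii}(t)=\tfrac12(X_t^i)^2$ via the geometric/Stratonovich shuffle relation, then invoke Pickands' theorem (through the relation $X_t^i=\overline{X}_t^i-e^{-\lambda_i t}\overline{X}_0^i$) to get sub-polynomial growth of $X_t^i$. The only cosmetic difference is that you bound via the running supremum $\sup_{s\le t}|X_s^i|$ whereas the paper uses the pointwise limit $\overline{X}_t^i/t^{\alpha}\to0$ directly; both are immediate from Pickands and lead to the same conclusion.
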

\begin{proof} Recall that \begin{equation}
\overline{X}_{t}^{i}=\sigma\int_{-\infty}^{t}e^{-\lambda_{i}(t-s)}dB_{s}^{H,i},\ i=1,2,\cdots,d.\label{fOU-indept-stationary}
\end{equation}
As we have shown, in Proposition \ref{Holder-F(X)-lem}, for any $\alpha>0$,
\begin{equation}
\lim_{t\to\infty}\frac{\overline{X}_{t}^{i}}{t^{\alpha}}=0,\ a.s..\label{t-alpha}
\end{equation}
% As we know that the covariance of the process $\overline{X}_{t}^{i}$
% is
% \begin{equation}
% r_{i}(t)=\mathrm{Cov}(\overline{X}_{s+t}^{i},\overline{X}_{s}^{i})=C\left(1-\frac{\lambda_{i}^{2H}}{G(2H+1)}t^{2H}+o(t^{2H})\right),
% \end{equation}
% for $t$ small, where $C=\sigma^{2}\lambda_{i}^{-2H}HG(2H)$ and $G(\cdot)$
% is Gamma function. Then the limit (\ref{t-alpha}) follows from Theorem
% 3.1 of Pickands \cite{Pick69}.
Since $X_{t}^{i}=\overline{X}_{t}^{i}-e^{-\lambda_{i}t}\overline{X}_{0}^{i}$
and
\[
\int_{0}^{t}X_{s}^{i}\circ d_{\mathfrak{R}_{1}}\mathbf{X}^{i}=\frac{1}{2}(X_{t}^{i})^{2},
\]
from (\ref{t-alpha}) it follows that
\[
\lim_{t\to\infty}\frac{1}{t}\int_{0}^{t}X_{s}^{i}\circ d_{\mathfrak{R}_{1}}\mathbf{X}^{i}=0,\ a.s..
\]
Thus, we have concluded this lemma for the on-diagonal case. \end{proof}

\subsubsection{Off-diagonal case}

Let $X=(X^{1},X^{2},\cdots,X^{d})$ be the $d$-dimensional, stationary
Gaussian process given by (\ref{fOU-indept-stationary}). Its covariance
function is given by
\[
R_{X}(s,t)=\mathrm{diag}(R_{1}(s,t),\cdots,R_{d}(s,t)),
\]
where $R_{i}(s,t):=\mathbb{E}(X_{s}^{i}X_{t}^{i})$.

When $i\neq j$, we have (as proof of equation (\ref{Holder-5}) in
Proposition \ref{Holder-prop}) that
\begin{equation}
\mathbb{E}\left(\left|\int_{0}^{t}X_{s}^{i}\circ d_{\mathfrak{R}_{1}}\mathbf{X}^{j}\right|^{2}\right)\leq Ct^{4H}+Ct^{2H}.
\end{equation}
When $t\geq1$, we have
\begin{equation}
\mathbb{E}\left(\left|\int_{0}^{t}X_{s}^{i}\circ d_{\mathfrak{R}_{1}}\mathbf{X}^{j}\right|^{2}\right)\leq Ct^{4H}.\label{4H-ineq}
\end{equation}

Now we define $A_{ij}(t)=\int_{0}^{t}X_{s}^{i}\circ d_{\mathfrak{R}_{1}}\mathbf{X}^{j}$,
as in subsection \ref{holder-levy-subsec}, and that $Z_{n}^{ij}:=n^{-2H}A_{ij}(n)$. 
We first show that, when $t=n\in\mathbb{N}$ (discrete sequence), we
have that 
\begin{equation}
\lim_{n\to\infty}\frac{1}{n}A_{ij}(n)=0,\ a.s..
\end{equation}

\begin{prop}\label{discrete-converge} For the discrete sequence
$\{\frac{1}{n}A_{ij}(n),n\geq1\}$ and $H\in(\frac{1}{3},\frac{1}{2})$,
we have that 
\begin{equation}
\frac{1}{n}A_{ij}(n)\to0,\ a.s.
\end{equation}
as $n$ goes to infinity. \end{prop}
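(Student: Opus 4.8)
The plan is to exploit that $A_{ij}(n)$ lives in a fixed, low-order Wiener chaos, so that the second-moment bound already established in (\ref{4H-ineq}) upgrades to all $L^p$-moments via hypercontractivity, after which a simple summability/Borel--Cantelli argument closes the proof.

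First I would record the chaos structure. After the orthogonal reduction of the previous subsection the components $X^i,X^j$ are independent for $i\neq j$, with $X^i_s$ (resp. the increments $dX^j$) belonging to the first Wiener chaos generated by $B^{H,i}$ (resp. $B^{H,j}$). Hence every Riemann-sum approximation to $A_{ij}(n)=\int_0^n X_s^i\circ d_{\mathfrak{R}_1}\mathbf{X}^j$ is a finite linear combination of products of two independent first-chaos variables, so it lies in $\mathcal{C}^2(\mathbb{P})$; since $\mathcal{C}^2(\mathbb{P})$ is $L^2$-closed and the relevant $2$D Young integral is the $L^2$-limit of such sums, we get $A_{ij}(n)\in\mathcal{C}^2(\mathbb{P})$. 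This is the step I expect to need the most care to state rigorously, since it relies on the standard facts about Young/rough integrals against Gaussian processes together with closedness of Wiener chaos (e.g. Chapter~15 of \cite{FV10}); everything afterwards is soft.

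Next, using $\mathbb{E}|A_{ij}(n)|^2\le Cn^{4H}$ for $n\ge1$ from (\ref{4H-ineq}), I would apply the hypercontractivity estimate of Lemma \ref{hypercontract} with $q=2$ to obtain, for every $p>2$,
\[
\mathbb{E}|A_{ij}(n)|^p\le 3^p(p-1)^p\bigl(\mathbb{E}|A_{ij}(n)|^2\bigr)^{p/2}\le C_p\,n^{2Hp},
\]
and therefore $\mathbb{E}\bigl|\tfrac1n A_{ij}(n)\bigr|^p\le C_p\,n^{(2H-1)p}$. Since $H<\tfrac12$, the exponent $1-2H$ is positive, so I may fix $p>\tfrac1{1-2H}$; then $(2H-1)p<-1$ and
\[
\sum_{n=1}^{\infty}\mathbb{E}\left|\tfrac1n A_{ij}(n)\right|^p\le C_p\sum_{n=1}^{\infty}n^{(2H-1)p}<\infty.
\]
By monotone convergence this forces $\sum_{n\ge1}\bigl|\tfrac1n A_{ij}(n)\bigr|^p<\infty$ almost surely, whence the general term tends to $0$, i.e. $\tfrac1n A_{ij}(n)\to0$ a.s., which is the claim. (Equivalently, Markov's inequality gives $\mathbb{P}(|A_{ij}(n)|/n>\varepsilon)\le C_p\varepsilon^{-p}n^{(2H-1)p}$ and one concludes by Borel--Cantelli.) The only genuinely substantive ingredient beyond bookkeeping is the chaos membership of $A_{ij}(n)$ needed to invoke Lemma \ref{hypercontract}; the remainder is the routine principle that a polynomially small second moment combined with a fixed chaos order yields almost sure decay along the integers, with the extra decay factor $n^{2H-1}\to0$ doing the rest.
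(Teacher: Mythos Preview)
Your proposal is correct and follows essentially the same route as the paper: the paper likewise invokes the second-chaos membership of $A_{ij}(n)$ (citing Proposition~15.20 of \cite{FV10}), upgrades the bound $\mathbb{E}|A_{ij}(n)|^2\le Cn^{4H}$ to all $L^p$ via Lemma~\ref{hypercontract}, chooses $p>\frac{1}{1-2H}$, and finishes by Chebyshev and Borel--Cantelli. The only cosmetic difference is that the paper introduces the normalized variable $Z_n^{ij}=n^{-2H}A_{ij}(n)$ before applying hypercontractivity, whereas you work directly with $A_{ij}(n)$; this does not affect the argument.
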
 \begin{proof} By the inequality
(\ref{4H-ineq}), we have
\[
\mathbb{E}|A_{ij}(n)|^{2}\leq Cn^{4H}.
\]
Then
\[
\sup_{n}\mathbb{E}|Z_{n}^{ij}|^{2}\leq C.
\]
According Proposition 15.20 of \cite{FV10}, we know that $Z_{n}^{ij}$
belongs to the second Wiener chaos $\mathcal{C}^{2}(\mathbb{P})$.
By Lemma \ref{hypercontract}, we have that 
\[
\sup_{n}\mathbb{E}|Z_{n}^{ij}|^{p}\leq3^{p}(p-1)^{p}\sup_{n}(\mathbb{E}|Z_{n}^{ij}|^{2})^{\frac{p}{2}}<\infty.
\]
For any $\epsilon>0$, by the Chebyshev inequality, we have that 
\[
\mathbb{P}\left(|A_{ij}(n)|>n\epsilon\right)=\mathbb{P}\left(|Z_{n}^{ij}|>n^{1-2H}\epsilon\right)\leq\frac{1}{n^{p(1-2H)}\epsilon^{p}}\sup_{n}\mathbb{E}|Z_{n}^{ij}|^{p},
\]
where $p>\frac{1}{1-2H}$.

Then,
\[
\sum_{n}\mathbb{P}\left(|A_{ij}(n)|>n\epsilon\right)\leq\sum_{n}\frac{C}{n^{p(1-2H)}\epsilon^{p}}<\infty.
\]
The almost sure convergence follows from the Borel-Cantelli lemma.
\end{proof}

Now we can conclude this subsection and show that the limit for
arbitrary $t$ rather than at discrete time $\mathbb{N}_{+}$.

\begin{thm}\label{limit-Strat-xdx} Suppose stochastic process $X_{t}$
is the fOU process which is the solution to stochastic differential equation
(\ref{fOU-h-2}) and $\Gamma$ is symmetric and positive-definite.
Then
\[
\frac{1}{t}A(t)=\frac{1}{t}\int_{0}^{t}X_{s}\otimes\circ d_{\mathfrak{R}_{1}}\mathbf{X}\to0,\ a.s.,
\]
as $t\to\infty$, where the above integral is in the Stratonovich sense.
\end{thm}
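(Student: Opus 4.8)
The plan is to reduce to the decoupled process and then assemble the theorem from three pieces that are already in place: the orthogonal reduction carried out above, the on-diagonal convergence of Lemma \ref{lem-ondiagonal}, and the off-diagonal convergence along the integers of Proposition \ref{discrete-converge}. The only genuinely new step is to interpolate between consecutive integers using the increment bound of Proposition \ref{Holder-prop}.

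After the orthogonal change of variables $\widetilde X=\overline\Sigma X$ it suffices to prove $\frac1t\int_0^t \widetilde X_s\circ d_{\mathfrak{R}_1}\mathbf{\widetilde X}\to0$ a.s., since $\int_0^t X_s\otimes\circ d_{\mathfrak{R}_1}\mathbf{X}=\overline\Sigma^T\bigl(\int_0^t \widetilde X_s\circ d_{\mathfrak{R}_1}\mathbf{\widetilde X}\bigr)\overline\Sigma$ with $\overline\Sigma$ a fixed orthogonal matrix, and convergence of each entry is preserved under a fixed linear map. Dropping the tildes, $X=(X^1,\dots,X^d)$ has independent components $X^i_t=\sigma\int_0^t e^{-\lambda_i(t-s)}dB^{H,i}_s$, and we must show $\frac1t A_{ij}(t)\to0$ a.s. for every $i,j$. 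For $i=j$ this is exactly Lemma \ref{lem-ondiagonal}, because $\int_0^t X^i_s\circ d_{\mathfrak{R}_1}\mathbf{X}^i=\tfrac12(X^i_t)^2$ and $\tfrac1t(X^i_t)^2\to0$ a.s. by (\ref{t-alpha}). For $i\neq j$, Proposition \ref{discrete-converge} gives $\tfrac1n A_{ij}(n)\to0$ a.s. along $n\in\mathbb{N}$.

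It remains to fill the gaps. Fix $i\neq j$ and, for $t\in[n,n+1]$, write
\[
\left|\tfrac1t A_{ij}(t)\right|\le \tfrac1n\,|A_{ij}(n)|+\tfrac1n\,\bigl|A_{ij}(t)-A_{ij}(n)\bigr|.
\]
The first summand tends to $0$ a.s. by the previous step. For the second, by additivity of the rough integral $A_{ij}(t)-A_{ij}(n)$ is the L\'evy-area increment $\int_n^t X^i_u\circ d_{\mathfrak{R}_1}\mathbf{X}^j$ estimated in Proposition \ref{Holder-prop}, so (applying that proposition on the interval $[(n+1)-1,\,n+1]$) there are an a.s.\ finite random variable $\widetilde C$ and an exponent $0<\beta<1$ with $|A_{ij}(t)-A_{ij}(n)|\le \widetilde C\,(n+1)^\beta$ for all $t\in[n,n+1]$. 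Hence $\tfrac1n|A_{ij}(t)-A_{ij}(n)|\le \widetilde C\,(n+1)^\beta/n\to0$ a.s., which gives $\tfrac1t A_{ij}(t)\to0$ a.s. as $t\to\infty$. Combining this with the diagonal case and undoing the orthogonal reduction yields the theorem.

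The one technical point that needs care — and which I expect to be the only real work — is that Propositions \ref{Holder-prop} and \ref{discrete-converge} are stated for the \emph{stationary} fOU, whereas the theorem concerns the process with $X_0=0$, i.e. $X^i_t=\overline X^i_t-e^{-\lambda_i t}\overline X^i_0$ with $\overline X^i_t=\sigma\int_{-\infty}^t e^{-\lambda_i(t-s)}dB^{H,i}_s$. Substituting this into the L\'evy-area increment $\int_n^t X^i_u\circ d_{\mathfrak{R}_1}\mathbf{X}^j$ produces the stationary L\'evy-area increment (bounded by Proposition \ref{Holder-prop}) plus three correction terms, each carrying a factor $e^{-\lambda_k u}$ and/or the fixed finite Gaussian variables $\overline X^i_0,\overline X^j_0$; using the almost sure polynomial growth $\sup_{0\le s\le n}|\overline X^i_s|=o(n^\delta)$ from (\ref{ineq-Pickand}) together with an elementary Young estimate for $\int_n^t e^{-\lambda_i u}\,d\overline X^j_u$, these corrections are at most polynomial in $n$ (in fact exponentially small), so the $n^\beta$ bound survives with a possibly larger $\widetilde C$ and $\beta<1$ — which is all the gap-filling argument needs. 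The same substitution shows $\tfrac1n A_{ij}(n)\to0$ a.s.\ transfers from the stationary to the $X_0=0$ process, since the corrections to $A_{ij}(n)$ do not grow linearly in $n$.
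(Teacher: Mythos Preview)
Your proposal is correct and follows essentially the same approach as the paper: orthogonal reduction, Lemma \ref{lem-ondiagonal} for the diagonal, Propositions \ref{discrete-converge} and \ref{Holder-prop} combined via the triangle inequality for the off-diagonal, and then a stationary-to-nonstationary transfer via the decomposition $X^i_t=\overline X^i_t-e^{-\lambda_i t}\overline X^i_0$. The only organizational difference is that the paper performs the nonstationary correction once on the full integral $\tfrac1t\int_0^t X^i_u\circ d_{\mathfrak{R}_1}\mathbf{X}^j$ (expanding it into the stationary L\'evy area plus three Young-integral correction terms, each shown to be $o(t)$ by a Borel--Cantelli/GRR argument mirroring Proposition \ref{Holder-prop}), whereas you propose to transfer both Propositions \ref{Holder-prop} and \ref{discrete-converge} separately; the paper's route is slightly cleaner but the content is the same.
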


\begin{proof} First, assume that $X$ is the stationary fOU process
as in equation (\ref{fOU-indept-stationary}). The on-diagonal case
is prove in Lemma \ref{lem-ondiagonal}. For the off-diagonal case,
since
\begin{equation}
\frac{1}{t}|A_{ij}(t)|\leq\frac{1}{t}|A_{ij}(t)-A_{ij}(n)|+\frac{n}{t}\frac{1}{n}|A_{ij}(n)|
\end{equation}
and setting that $n=[t]$, by Proposition \ref{Holder-prop}, we have that
the first term on the right hand side is controlled by $\widetilde{C}t^{-1}n^{\beta}\leq\widetilde{C}n^{\beta-1}\to0,\ a.s.$.
The second term also tends to zero by Proposition \ref{discrete-converge}.
Thus we have completed the proof of Theorem \ref{limit-Strat-xdx}
when the fOU process $X$ is stationary.

If $X$ is not stationary but starts at point $0$ at $t=0$,
we can also prove this asymptotic for the Stratonovich integrals.
Now let $\overline{X}=(\overline{X}^{1},\cdots,\overline{X}^{d})$
be the stationary version as above. Then the fOU process is  $X_{t}^{i}=\overline{X}_{t}^{i}-e^{-\lambda_{i}t}\overline{X}_{0}^{i}$,
$i=1,2,\cdots,d$, so
\[
\begin{split}\frac{1}{t}\int_{0}^{t}X_{u}^{i}\circ d_{\mathfrak{R}_{1}}\mathbf{X}^{j} & =\frac{1}{t}\int_{0}^{t}\overline{X}_{u}^{i}\circ d_{\mathfrak{R}_{1}}\overline{\mathbf{X}}^{j}+\frac{1}{t}\int_{0}^{t}\lambda_{j}e^{-\lambda_{j}u}\overline{X}_{u}^{i}du\overline{X}_{0}^{j}\\
 & \ \ \ -\frac{1}{t}\int_{0}^{t}e^{-\lambda_{i}u}d\overline{X}_{u}^{j}\overline{X}_{0}^{i}-\frac{1}{t}\int_{0}^{t}\lambda_{j}e^{-(\lambda_{i}+\lambda_{j})u}du\overline{X}_{0}^{i}\overline{X}_{0}^{j},
\end{split}
\]
where the last three integrals are Young's integrals.

The first term on the right hand side tends to zero almost surely,
which has been prove above. The last term also goes to zero almost
surely, which can be prove easily. For the second and third terms,
we can see that $\int_{0}^{t}\lambda_{j}e^{-\lambda_{j}u}\overline{X}_{u}^{i}du$
and $\int_{0}^{t}e^{-\lambda_{i}u}d\overline{X}_{u}^{j}$ are two
Gaussian processes. By almost the same arguments to the proof of the
limit $\frac{1}{t}\int_{0}^{t}\overline{X}_{u}^{i}\circ d_{\mathfrak{R}_{1}}\overline{\mathbf{X}}^{j}\to0,\ a.s$,
we can also prove that the second and third terms both converge to
zero almost surely. Here we just give a sketch of proof for the second
term.

Define that $Z_{t}=\int_{0}^{t}\lambda_{j}e^{-\lambda_{j}u}\overline{X}_{u}^{i}du$,
and $\xi=\overline{X}_{0}^{j}$. First, we show that $\frac{1}{n}(\xi Z_{n})\to0,\ a.s$
for the integer subsequence. Since
\[
\begin{split}\mathbb{E}|Z_{n}|^{2} & =\mathbb{E}\left(\int_{0}^{n}\lambda_{j}e^{-\lambda_{j}u}\overline{X}_{u}^{i}du\right)^{2}=\int_{0}^{n}\int_{0}^{n}r_{i}(u-v)e^{-\lambda_{j}(u+v)}dudv\\
 & \leq\max_{t\geq0}|r_{i}(t)|\int_{0}^{n}\int_{0}^{n}e^{-\lambda_{j}(u+v)}dudv\leq\frac{C}{\lambda_{j}^{2}}(e^{-\lambda_{j}n}-1)^{2}\leq\widetilde{C},
\end{split}
\]
where $C,\widetilde{C}$ are independent of $n$, 
\[
\mathbb{P}\left(\frac{1}{n}|\xi Z_{n}|>\varepsilon\right)\leq\frac{\mathbb{E}|\xi Z_{n}|^{2}}{n^{2}\varepsilon^{2}}\leq\frac{(\mathbb{E}\xi^{4})^{\frac{1}{2}}+(\mathbb{E}Z_{n}^{4})^{\frac{1}{2}}}{n^{2}\varepsilon^{2}}\leq\frac{C}{n^{2}\varepsilon^{2}},
\]
by the Borel-Cantelli lemma, we have prove that $\frac{1}{n}(\xi Z_{n})\to0,\ a.s$.

Now we show, for any $n\geq1$ and any $s,t\in[n,n+1]$, that there exist
a constant $\beta\in(0,1)$ and an almost surely finite random variable
$R$ such that $|Z_{t}-Z_{s}|\leq Rn^{\beta},\ a.s.$. Since
\[
\mathbb{E}|Z_{t}-Z_{s}|^{2}=\mathbb{E}\left(\int_{s}^{t}\lambda_{j}e^{-\lambda_{j}u}\overline{X}_{u}^{i}du\right)^{2}=\int_{s}^{t}\int_{s}^{t}r_{i}(u-v)e^{-\lambda_{j}(u+v)}dudv\leq C|t-s|^{2},
\]
where $C$ is a universal constant, applying the Garsia-Rodemich-Rumsey
inequality as Proposition \ref{Holder-prop}, we get that $|Z_{t}-Z_{s}|\leq Rn^{\beta},\ a.s.$
Then, choosing that $n=[t]$,
\[
\frac{1}{t}|\xi Z_{t}|\leq\frac{1}{t}|\xi||Z_{t}-Z_{n}|+\frac{n}{t}\frac{1}{n}|\xi Z_{n}|\leq R|\xi|n^{\beta-1}+\frac{1}{n}|\xi Z_{n}|\to0,\ a.s..
\]
Thus, we prove the limit of the second term. The third term follows as above. By taking an orthogonal transformation for $X$ (independent
components), we get the same limit for Stratonovich integral of solution
to Equation (\ref{fOU-h-2}). This concludes the theorem.
\end{proof}

\section{Pathwise stable estimators}
\label{sec-CRPE}

\subsection{Continuous rough path estimator}

In this section, let $X$ be the fOU process, i.e. the solution
to the following stochastic differential equation:
\begin{equation}
dX_{t}=-\Gamma X_{t}dt+\Sigma dB_{t}^{H}.\label{fOU-h-3}
\end{equation}
We construct an estimator based on continuous observation via
rough path theory. We suppose that the rough path enhancement $(X_{0,t}(\omega),\mathbb{X}_{0,t}(\omega))$
of fOU process $X_{t}(\omega)$ could be continuously observed in
the It\^o sense defined in Section \ref{sec-RP}. This may leave users
with the question of how to understand data as a rough path in practice, but there are in fact works on how to apply inverse data
to rough paths. We recommend those who may be interested in these
questions to look at the literature on rough path analysis, in particular, \cite{BD15}.

For the construction of the estimator, we adapt the idea of the least square
estimator of Hu and Nualart \cite{HN10}. Hu and Nualart have derived the estimator
in the one dimensional case, which is formally taken as the minimizer
\begin{equation}
\widehat{\gamma}_{T}:=\arg\min_{\gamma\in\Theta}\int_{0}^{T}|\dot{X}_{t}-(-\gamma X_{t})|^{2}dt,
\end{equation}
where $\Theta$ is the parameter space. In multi-dimensional case,
we consider (formally) the estimator as the minimizer, which is also discussed in \cite{HNZ19}, so that 
\begin{equation}
\widehat{\Gamma}_{t}:=\arg\min_{\Gamma\in\Theta}\int_{0}^{t}\|\Sigma^{-1}\dot{X}_{s}-(-\Gamma\Sigma^{-1}X_{s})\|^{2}ds,\ t\in[0,T],
\end{equation}
which leads to the solution
\begin{equation}\label{continuous-Gamma}
\widehat{\Gamma}_{t}=-\mathcal{L}_{t}^{-1}S_{t},
\end{equation}
where
\begin{align}
\mathcal{L}_{t} & =\int_{0}^{t}(I\otimes X_{s})^{T}Q^{-1}(I\otimes X_{s})ds\in L(V,V^{*}),\\
S_{t} & =\int_{0}^{t}(I\otimes X_{s})^{T}Q^{-1}d_{\mathfrak{R}_{1}}\mathbf{X}\in V^{*},
\end{align}
and space $V=\mathbb{R}^{d\times d}$, $\mathcal{L}_{t}^{-1}$ is
the inverse of $\mathcal{L}_{t}$, $Q=\Sigma\Sigma^{T}$, $I\otimes X=(\delta_{j}^{i}X^{k})_{i,j,k=1,\cdots,d}$,
and $M^{T}$ denotes transpose of matrix $M$. The integral $S_{t}$
is taken as the It\^o rough integral of $X$ defined as in Section \ref{sec-RP}.
We call this estimator a \textit{rough path estimator}.

When $\Sigma=\sigma I$ ($I$ is identity matrix, $\sigma$ is a constant),
the estimator becomes
\begin{equation}
\widehat{\Gamma}_{t}^{T}=-\left(\int_{0}^{t}X_{s}\otimes X_{s}ds\right)^{-1}\left(\int_{0}^{t}X_{s}\otimes d_{\mathfrak{R}_{1}}\mathbf{X}\right).\label{estimator}
\end{equation}
Actually, we can make a rotation to dynamical system (\ref{fOU-h-3}),
i.e. act $\Sigma^{-1}$ to $X_{t}$, then we get the above diagonal
case. Thus, without loss of generality, we can suppose that $\Sigma=\sigma I$.

Now we give two examples for cases $d=1,2$. For the one dimensional case,
the rough path estimator is
\begin{equation}
\widehat{\gamma}_{t}=-\frac{\int_{0}^{t}X_{s}d_{\mathfrak{R}_{1}}\mathbf{X}}{\int_{0}^{t}X_{s}^{2}ds}=-\frac{\mathbb{X}_{0,t}+X_{0}X_{0,t}}{\int_{0}^{t}X_{s}^{2}ds}.
\end{equation}
For $d=2$, the transpose of the rough path estimator is
\begin{equation}
\begin{split}\widehat{\Gamma}_{t}^{T} & =-\frac{1}{\det(\mathcal{L}_{t}(X))}\begin{pmatrix}\int_{0}^{t}(X_{s}^{2})^{2}ds & -\int_{0}^{t}X_{s}^{1}X_{s}^{2}ds\\
-\int_{0}^{t}X_{s}^{1}X_{s}^{2}ds & \int_{0}^{t}(X_{s}^{1})^{2}ds
\end{pmatrix}\\
 & \ \ \ \ \ \ \ \ \ \ \ \ \ \ \ \ \ \ \ \ \ \ \ \ \ \ \ \times\begin{pmatrix}\int_{0}^{t}X_{s}^{1}d_{\mathfrak{R}_{1}}\mathbf{X}^{1} & \int_{0}^{t}X_{s}^{1}d_{\mathfrak{R}_{1}}\mathbf{X}^{2}\\
\int_{0}^{t}X_{s}^{2}d_{\mathfrak{R}_{1}}\mathbf{X}^{1} & \int_{0}^{t}X_{s}^{2}d_{\mathfrak{R}_{1}}\mathbf{X}^{2}
\end{pmatrix},
\end{split}
\end{equation}
where
\begin{align}
\det(\mathcal{L}_{t}(X)) & =\int_{0}^{t}(X_{s}^{1})^{2}ds\int_{0}^{t}(X_{s}^{2})^{2}ds-\left(\int_{0}^{t}X_{s}^{1}X_{s}^{2}ds\right)^{2},\\
\int_{0}^{t}X_{s}^{i}d_{\mathfrak{R}_{1}}\mathbf{X}^{j} & =\mathbb{X}_{0,t}^{ij}+X_{0}^{i}X_{0,t}^{j},\ \ i,j=1,2.
\end{align}
As a remark, we mention that here $X(\omega),\mathbb{X}(\omega)$
and $\widehat{\Gamma}(\omega)$ are pathwise-defined, almost surely.

\subsection{Strong consistency}

Now we consider the asymptotic behavior of the rough path estimator
$\widehat{\Gamma}_{t}$. The solution $X$ to (\ref{fOU-h-3}) is
given by
\begin{equation}
X_{t}=e^{-\Gamma t}X_{0}+\int_{0}^{t}e^{-\Gamma(t-s)}\Sigma dB_{s}^{H}.
\end{equation}
Without loss of generality, we suppose that $X_{0}=0$.

In what follows, we will prove chain rules for our rough integrals,
and then show the almost sure convergence of our rough path estimator.

\subsubsection{Chain rules}

First, we have the following lemma: \begin{lem}\label{chain-lem}
For $H\in(\frac{1}{3},\frac{1}{2}]$, we have that 
\begin{equation}
\int_{0}^{t}X_{s}\otimes d_{\mathfrak{R}_{1}}\mathbf{X}=-\left(\int_{0}^{t}X_{s}\otimes X_{s}ds\right)\Gamma^{T}+\sigma\int_{0}^{t}X_{s}\otimes d_{\mathfrak{R}_{1}}\mathbf{B}^{H}.
\end{equation}
Here, the integrals can be either Stratonovich's or It\^o's rough integrals.
\end{lem}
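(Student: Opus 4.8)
The plan is to exploit the fact that $X$ is not an arbitrary rough path but the solution of the rough differential equation (\ref{fOU-h-3}) (with $\Sigma=\sigma I$), so that its own differential is explicit, $dX_{u}=-\Gamma X_{u}\,du+\sigma\,dB_{u}^{H}$, and to substitute this into the first level rough integral. I would first reduce both rough integrals to the second level of the joint rough path $\mathbf{Z}=(\mathbf{B}^{H},\mathbf{X},\mathbf{t})$. Taking $F=\mathrm{id}$ in (\ref{level-1}), so that $DF\equiv I$, and telescoping the partition sums by Chen's identity (\ref{Chen-r2}) --- exactly as in the $d=1,2$ examples above --- one gets, for every $t$,
\[
\int_{0}^{t}X_{s}\otimes d_{\mathfrak{R}_{1}}\mathbf{X}=\mathbb{X}_{0,t}+X_{0}\otimes X_{0,t},\qquad\int_{0}^{t}X_{s}\otimes d_{\mathfrak{R}_{1}}\mathbf{B}^{H}=\mathbb{Z}^{21}_{0,t}+X_{0}\otimes B^{H}_{0,t},
\]
where $\mathbb{Z}^{21}_{s,t}=\int_{s}^{t}X_{s,u}\otimes dB^{H}_{u}$ and $\mathbb{X}_{s,t}=\int_{s}^{t}X_{s,u}\otimes dX_{u}$ are the $(2,1)$ and $(2,2)$ blocks of $\mathbb{Z}_{s,t}$ in (\ref{Z2-RDE}).

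The core step is the substitution. Since $X$ solves (\ref{fOU-h-3}), the integrator in $\mathbb{X}_{s,t}=\int_{s}^{t}X_{s,u}\otimes dX_{u}$ is $dX_{u}=-\Gamma X_{u}\,du+\sigma\,dB^{H}_{u}$, so by linearity of the rough integral in its integrator (the associativity of integration against an RDE solution, cf.\ \cite{LQ02}, \cite{FH14}) and the matrix identity $a\otimes\Gamma b=(a\otimes b)\Gamma^{T}$,
\[
\mathbb{X}_{s,t}=-\int_{s}^{t}X_{s,u}\otimes\Gamma X_{u}\,du+\sigma\int_{s}^{t}X_{s,u}\otimes dB^{H}_{u}=-\left(\int_{s}^{t}X_{s,u}\otimes X_{u}\,du\right)\Gamma^{T}+\sigma\,\mathbb{Z}^{21}_{s,t},
\]
the $du$-integral being an ordinary Young/Riemann integral because its integrator has bounded variation. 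Setting $s=0$, substituting into the two formulas of the previous paragraph, and using the integral form of (\ref{fOU-h-3}), $X_{0,t}=-\Gamma\int_{0}^{t}X_{u}\,du+\sigma B^{H}_{0,t}$, to rewrite the boundary term $X_{0}\otimes X_{0,t}$, the two copies of $\sigma X_{0}\otimes B^{H}_{0,t}$ cancel, and the two $du$-integrals combine (via $X_{0,u}+X_{0}=X_{u}$) into $-\left(\int_{0}^{t}X_{u}\otimes X_{u}\,du\right)\Gamma^{T}$, which is the claimed identity. For the It\^o version the computation is unchanged once $\mathbf{B}^{H}$ is replaced by $\mathbf{B}^{H,\gamma}$ and $\mathbf{X}$ by the It\^o solution of (\ref{RDE}): because the diffusion coefficient $\sigma I$ is constant, the first level is the Young integral $X_{t}=e^{-\Gamma t}X_{0}+\sigma\int_{0}^{t}e^{-\Gamma(t-s)}dB^{H}_{s}$, which is the same path in both conventions, so $X_{0,t}$ and $\int_{0}^{t}X_{u}\otimes X_{u}\,du$ are unchanged; only $\mathbb{X}_{s,t}$ and $\mathbb{Z}^{21}_{s,t}$ change, and consistently.

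The step I expect to require the most care is this substitution step, i.e.\ justifying that the second level of the RDE solution really decomposes along the vector field of (\ref{fOU-h-3}). If one prefers to avoid invoking abstract associativity, it can be obtained directly from the compensated Riemann sums defining the three integrals: writing $X_{u,v}=-\Gamma\int_{u}^{v}X_{r}\,dr+\sigma B^{H}_{u,v}$ and, using the a.s.\ H\"older estimate of Proposition \ref{Holder-X-lem}, $\mathbb{X}_{u,v}=\sigma^{2}\mathbb{B}^{H}_{u,v}+O(|v-u|^{1+\alpha})$ for any $\alpha<H$, the summands of
\[
\sum_{[u,v]\in\mathcal{P}}\Big(X_{u}\otimes X_{u,v}+\mathbb{X}_{u,v}+(X_{u}\otimes X_{u})\Gamma^{T}(v-u)-\sigma X_{u}\otimes B^{H}_{u,v}-\sigma^{2}\mathbb{B}^{H}_{u,v}\Big)
\]
cancel to leading order, leaving a remainder of order $O(|\mathcal{P}|^{\alpha})$ after summation. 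Since, by the definitions, this combined sum converges as $|\mathcal{P}|\to0$ to $\int_{0}^{t}X_{s}\otimes d_{\mathfrak{R}_{1}}\mathbf{X}+\big(\int_{0}^{t}X_{s}\otimes X_{s}\,ds\big)\Gamma^{T}-\sigma\int_{0}^{t}X_{s}\otimes d_{\mathfrak{R}_{1}}\mathbf{B}^{H}$ while its value tends to $0$, the lemma follows.
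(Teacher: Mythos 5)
Your proposal is correct and follows essentially the same route as the paper: both arguments substitute the equation $dX_{u}=-\Gamma X_{u}\,du+\sigma\,dB_{u}^{H}$ into the integrator and control the resulting error terms through the almost-rough-path / compensated-Riemann-sum mechanism (Theorem 3.2.1 of \cite{LQ02}), your fallback sum being precisely the paper's ``$\simeq$'' computation made explicit. The only cosmetic difference is that you package the key step as an exact decomposition of the second-level block $\mathbb{X}_{0,t}$ of the joint rough path $\mathbf{Z}$, whereas the paper works with approximate identities modulo $\omega(s,t)^{\theta}$, $\theta>1$; both rest on the same sewing-type uniqueness.
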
 \begin{proof} We use the relationship between almost rough
paths and rough paths, see Theorem 3.2.1 in \cite{LQ02}, to prove
this lemma. To simplify notations, we show the $d=1$ case, i.e. prove that 
\begin{equation}
\int_{0}^{t}X_{s}d_{\mathfrak{R}_{1}}\mathbf{X}=-\gamma\int_{0}^{t}(X_{s})^{2}ds+\sigma\int_{0}^{t}X_{s}d_{\mathfrak{R}_{1}}\mathbf{B}^{H}.\label{chain}
\end{equation}
First, by the theory of rough differential equations and (\ref{RDE}),
we know that
\begin{align}
Z_{s,t} & \simeq f(Z_{s})Z_{s,t}+Df(Z_{s})\mathbb{Z}_{s,t},\label{Z-1}\\
\mathbb{Z}_{s,t} & \simeq f(Z_{s})\otimes f(Z_{s})\mathbb{Z}_{s,t},\label{Z-2}
\end{align}
where the right hand sides are actually almost rough paths associated
$\mathbf{Z}_{s,t}$, and $\simeq$ means the difference is controlled
by $\omega(s,t)^{\theta}$ with $\theta>1$, for all $(s,t)\in\Delta$.
By Equation (\ref{Z-1}), we have that 
\[
Z_{s,t}\simeq\left(B_{s,t}^{H},-\gamma X_{s}(t-s)+\sigma B_{s,t}^{H},t-s\right)+\left(0,-\gamma\int_{s}^{t}X_{s,u}du,0\right).
\]
Since
\[
\left|\int_{s}^{t}X_{s,u}du\right|=\left|\int_{s}^{t}X_{u}du-X_{s}(t-s)\right|=o(|t-s|),
\]
we have that 
\[
Z_{s,t}\simeq\left(B_{s,t}^{H},-\gamma X_{s}(t-s)+\sigma B_{s,t}^{H},t-s\right).
\]
This implies that 
\begin{equation}
X_{s,t}\simeq-\gamma X_{s}(t-s)+\sigma B_{s,t}^{H}.\label{X-1}
\end{equation}
Actually, the above formula could be found from the stochastic differential
equation (\ref{fOU-h-3}) directly. Now by (\ref{Z-2}), we have that
\[
\mathbb{Z}_{s,t}\simeq\begin{pmatrix}\mathbb{B}_{s,t}^{H} & \int_{s}^{t}B_{s,u}^{H}dX_{u} & \int_{s}^{t}B_{s,u}^{H}du\\
M_{1} & M_{2} & M_{3}\\
\int_{s}^{t}(u-s)dB_{u}^{H} & \int_{s}^{t}(u-s)dX_{u} & \frac{1}{2}(t-s)^{2}
\end{pmatrix},
\]
where
\begin{align*}
M_{1} & =\sigma\mathbb{B}_{s,t}^{H}-\gamma X_{s}\int_{s}^{t}(u-s)dB_{u}^{H},\\
M_{2} & =\sigma\int_{s}^{t}B_{s,u}^{H}dX_{u}-\gamma X_{s}\int_{s}^{t}(u-s)dX_{u},\\
M_{3} & =\sigma\int_{s}^{t}B_{s,u}^{H}du-\frac{1}{2}\gamma X_{s}(t-s)^{2}.
\end{align*}
Hence,
\begin{align}
 & \int_{s}^{t}X_{s,u}dB_{u}^{H}\simeq\sigma\mathbb{B}_{s,t}^{H}-\gamma X_{s}\int_{s}^{t}(u-s)dB_{u}^{H}\simeq\sigma\mathbb{B}_{s,t}^{H},\label{XB}\\
 & \ \ \mathbb{X}_{s,t}\simeq\sigma\int_{s}^{t}B_{s,u}^{H}dX_{u}-\gamma X_{s}\int_{s}^{t}(u-s)dX_{u}\simeq\sigma\int_{s}^{t}B_{s,u}^{H}dX_{u},\label{XX}\\
 & \int_{s}^{t}X_{s,u}du\simeq\sigma\int_{s}^{t}B_{s,u}^{H}du-\frac{1}{2}\gamma X_{s}(t-s)^{2}=o(|t-s|).
\end{align}
Combining (\ref{X-1}) and (\ref{XX}), we further have $\mathbb{X}_{s,t}\simeq\sigma^{2}\mathbb{B}_{s,t}^{H}$.

Now using the results above, we can show Equation (\ref{chain}), 
since we have that 
\[
\begin{split}LHS & \simeq X_{s}X_{s,t}+\mathbb{X}_{s,t}\\
 & \simeq-\gamma(X_{s})^{2}(t-s)+\sigma X_{s}B_{s,t}^{H}+\sigma^{2}\mathbb{B}_{s,t}^{H}\\
 & \simeq RHS.
\end{split}
\]
Thus we have completed the proof of the lemma. \end{proof} As a
corollary, we have \begin{cor}\label{cor1} For $H\in(\frac{1}{3},\frac{1}{2}]$,
the rough path estimator $\widehat{\Gamma}_{t}$ has the following
expression:
\begin{equation}
\widehat{\Gamma}_{t}^{T}=\Gamma^{T}-\left(\int_{0}^{t}X_{s}\otimes X_{s}ds\right)^{-1}\left(\int_{0}^{t}X_{s}\otimes d_{\mathfrak{R}_{1}}\mathbf{B}^{H,\text{It\^o}}\right).
\end{equation}
\end{cor}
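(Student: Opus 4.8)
The plan is to obtain the identity by substituting the chain rule of Lemma~\ref{chain-lem} into the closed form~(\ref{estimator}) of the estimator and cancelling the Gram matrix against its inverse. After the rotation reducing the general case to $\Sigma=\sigma I$, we may further rescale $X\mapsto\sigma^{-1}X$; this multiplies both $\mathcal{L}_{t}=\int_{0}^{t}X_{s}\otimes X_{s}\,ds$ and the It\^o rough integral $\int_{0}^{t}X_{s}\otimes d_{\mathfrak{R}_{1}}\mathbf{X}$ by $\sigma^{-2}$, hence leaves $\widehat{\Gamma}_{t}=-\mathcal{L}_{t}^{-1}\big(\int_{0}^{t}X_{s}\otimes d_{\mathfrak{R}_{1}}\mathbf{X}\big)$ unchanged, so without loss of generality $\sigma=1$. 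Lemma~\ref{chain-lem} is valid on the full range $H\in(\frac{1}{3},\frac{1}{2}]$ under consideration, and in its It\^o form it reads
\[
\int_{0}^{t}X_{s}\otimes d_{\mathfrak{R}_{1}}\mathbf{X}=-\Big(\int_{0}^{t}X_{s}\otimes X_{s}\,ds\Big)\Gamma^{T}+\int_{0}^{t}X_{s}\otimes d_{\mathfrak{R}_{1}}\mathbf{B}^{H,\gamma},
\]
where $\mathbf{B}^{H,\gamma}$ is the It\^o-type lift of $B^{H}$ built in~(\ref{def-varphi}). Multiplying on the left by $-\mathcal{L}_{t}^{-1}$ and using $\mathcal{L}_{t}^{-1}\mathcal{L}_{t}=I$ gives $\widehat{\Gamma}_{t}^{T}=\Gamma^{T}-\mathcal{L}_{t}^{-1}\big(\int_{0}^{t}X_{s}\otimes d_{\mathfrak{R}_{1}}\mathbf{B}^{H,\gamma}\big)$, which is the assertion.

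Two points deserve a remark. First, the computation presupposes that $\mathcal{L}_{t}$ is invertible for every $t>0$, as is already implicit in the definition of $\widehat{\Gamma}_{t}$; this holds almost surely because $\det\mathcal{L}_{t}$ is the Gram determinant in $L^{2}[0,t]$ of the centred, non-degenerate, almost surely linearly independent Gaussian coordinate processes $X^{1},\dots,X^{d}$, which is almost surely strictly positive, simultaneously for all $t>0$ by taking a countable intersection over $t\in\{1/n\}$ together with monotonicity in $t$ of the smallest eigenvalue of $\mathcal{L}_{t}$. Second, one must keep track of which rough integral is in force: the estimator~(\ref{estimator}) uses the It\^o rough integral of $X$ against $\mathbf{X}$, and since Lemma~\ref{chain-lem} holds verbatim for both the Stratonovich and the It\^o enhancements, applying its It\^o version is consistent and produces precisely the It\^o rough integral of $X$ against $\mathbf{B}^{H,\gamma}$ on the right-hand side; this is exactly the compatibility built into the correction term $\varphi^{\gamma}$ in Section~\ref{sec-RP}.

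There is no genuine obstacle here: the corollary is a purely algebraic consequence of Lemma~\ref{chain-lem} and the definition~(\ref{estimator}), while all the analytic content (the rough-path chain rule via almost rough paths, and the definition and compatibility of the It\^o correction) has already been established. The only thing one has to handle carefully is the bookkeeping described above, namely the normalization of the scalar $\sigma$ and the choice of enhancement on each side of the identity.
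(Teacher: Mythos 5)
Your proposal is correct and follows the same route as the paper, which presents the corollary as an immediate algebraic consequence of substituting the chain rule of Lemma~\ref{chain-lem} (in its It\^o form) into the definition~(\ref{estimator}) and cancelling $\mathcal{L}_{t}^{-1}\mathcal{L}_{t}$. Your additional remarks on the almost sure invertibility of $\mathcal{L}_{t}$ and on the normalization $\sigma=1$ (which the paper leaves implicit, since Lemma~\ref{chain-lem} otherwise produces a factor $\sigma$ in front of $\int_{0}^{t}X_{s}\otimes d_{\mathfrak{R}_{1}}\mathbf{B}^{H,\gamma}$) are sound and only make explicit what the paper tacitly assumes.
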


\subsubsection{Almost sure convergence}

In order to establish the strong consistency of the rough path estimator
$\widehat{\Gamma}_{t}$, i.e.,
\begin{equation}
\widehat{\Gamma}_{t}\to\Gamma,\ a.s.\ \text{as}\ t\to\infty,\label{strongcns}
\end{equation}
%\begin{equation}\label{CLT}
%\sqrt{t}(\widehat{\Gamma}_t-\Gamma)\xrightarrow{\mathcal{L}}\mathcal{N}(0,\Sigma_H),\ \text{as}\ t\to \infty,
%\end{equation}
our aim now is to prove that
\begin{equation}
\frac{1}{t}\int_{0}^{t}X_{s}\otimes X_{s}ds\to C_{1}(H),\ a.s.,
\end{equation}
\begin{equation}
\frac{1}{t}\int_{0}^{t}X_{s}\otimes d_{\mathfrak{R}_{1}}\mathbf{B}^{H,\text{It\^o}}\to0,\ a.s..
\end{equation}
%\begin{equation}
%\frac {1} {\sqrt{t}}\int_0^tX_s\otimes d_{\mathfrak{R}_1}\mathbf{B}^{H,\text{It\^o}}\xrightarrow{\mathcal{L}} \mathcal{N}(0,\widetilde{\Sigma}_H),
%\end{equation}
Then, combining the Slutsky Theorem and Corollary \ref{cor1}, we can get (\ref{strongcns}).
%and (\ref{CLT}).

\begin{prop}\label{limit-x2} Suppose that the stochastic process $X_{t}$
is the fOU process to stochastic differential equation (\ref{fOU-h-3})
and $\Gamma$ is symmetric and positive-definite. Then
\[
\frac{1}{t}\int_{0}^{t}X_{s}\otimes X_{s}ds\to C_{1}(H),\ a.s.,
\]
where the above integral on the left hand side is the Lebesgue integral
and the constant matrix is $C_{1}(H)=\sigma^{2}H\int_{0}^{\infty}x^{2H-1}e^{-\Gamma x}dx$.
\end{prop}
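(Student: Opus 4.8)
The plan is to reduce the statement to the stationary ergodic version of the process, apply a continuous-time ergodic theorem to the matrix-valued process $X_{s}\otimes X_{s}$, and then evaluate the resulting second moment explicitly. As in Section~\ref{longtime-subsec}, after the orthogonal change of coordinates $\widetilde{X}=\overline{\Sigma}X$ one may assume $\Gamma=\Lambda=\mathrm{diag}(\lambda_{1},\dots,\lambda_{d})$ with $0<\lambda_{1}\leq\cdots\leq\lambda_{d}$, so that $X=(X^{1},\dots,X^{d})$ has independent components $X_{t}^{i}=\sigma\int_{0}^{t}e^{-\lambda_{i}(t-s)}dB_{s}^{H,i}$; since $\frac{1}{t}\int_{0}^{t}X_{s}\otimes X_{s}\,ds=\overline{\Sigma}^{T}\big(\frac{1}{t}\int_{0}^{t}\widetilde{X}_{s}\otimes\widetilde{X}_{s}\,ds\big)\overline{\Sigma}$, the general statement follows once it is proved in these coordinates.

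First I would reduce to the stationary case. Let $\overline{X}_{t}^{i}=\sigma\int_{-\infty}^{t}e^{-\lambda_{i}(t-s)}dB_{s}^{H,i}$ be the stationary versions, so that $X_{t}^{i}=\overline{X}_{t}^{i}-e^{-\lambda_{i}t}\overline{X}_{0}^{i}$. Expanding $X_{s}^{i}X_{s}^{j}$ yields $\overline{X}_{s}^{i}\overline{X}_{s}^{j}$ plus three terms, each carrying an exponential factor $e^{-\lambda_{i}s}$ or $e^{-(\lambda_{i}+\lambda_{j})s}$. After applying $\frac{1}{t}\int_{0}^{t}\cdot\,ds$, the purely exponential term is $O(1/t)$, while the cross terms are controlled by the Pickands bound $(\overline{X}^{i})_{t}^{*}/t^{\delta}\to0$ a.s. for every $\delta>0$ (see~(\ref{ineq-Pickand})): for $\delta\in(0,1)$ this gives $|\overline{X}_{s}^{i}|\leq C_{\omega}s^{\delta}$ a.s., whence $\int_{0}^{t}e^{-\lambda_{j}s}|\overline{X}_{s}^{i}|\,ds$ is bounded in $t$ a.s. and the associated $\frac{1}{t}\int_{0}^{t}$ tends to $0$ a.s. Hence it suffices to show $\frac{1}{t}\int_{0}^{t}\overline{X}_{s}\otimes\overline{X}_{s}\,ds\to C_{1}(H)$ a.s.

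Next, since $\{\overline{X}_{t}\}_{t\geq0}$ is stationary and ergodic (\cite{CKM03}), so is the $\mathbb{R}^{d\times d}$-valued process $t\mapsto\overline{X}_{t}\otimes\overline{X}_{t}$, whose entries are integrable by Gaussianity; the continuous-time Birkhoff ergodic theorem, applied entrywise, gives
\[
\frac{1}{t}\int_{0}^{t}\overline{X}_{s}\otimes\overline{X}_{s}\,ds\longrightarrow\mathbb{E}\big(\overline{X}_{0}\otimes\overline{X}_{0}\big),\qquad\text{a.s.}
\]
Finally I would identify the limit: because the $\overline{X}^{i}$ are independent and mean-zero, $\mathbb{E}(\overline{X}_{0}\otimes\overline{X}_{0})=\mathrm{diag}(r_{1}(0),\dots,r_{d}(0))$, and from the near-origin expansion of the covariance recalled in Section~\ref{sec-longtime} one has $r_{i}(0)=\sigma^{2}\lambda_{i}^{-2H}H\,G(2H)=\sigma^{2}H\int_{0}^{\infty}x^{2H-1}e^{-\lambda_{i}x}\,dx$. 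Thus $\mathbb{E}(\overline{X}_{0}\otimes\overline{X}_{0})=\sigma^{2}H\int_{0}^{\infty}x^{2H-1}e^{-\Lambda x}\,dx$, and transforming back via $\overline{\Sigma}^{T}e^{-\Lambda x}\overline{\Sigma}=e^{-\Gamma x}$ yields $C_{1}(H)=\sigma^{2}H\int_{0}^{\infty}x^{2H-1}e^{-\Gamma x}\,dx$.

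The step I expect to be the main obstacle is the ergodic-theorem argument: making rigorous the realization of $\overline{X}$ as a measurable functional of a measure-preserving ergodic flow on path space, and verifying that $\overline{X}_{s}\otimes\overline{X}_{s}$ inherits ergodicity and integrability. If one prefers to avoid invoking ergodicity directly, an alternative is to establish the convergence along integers via a second-moment estimate on $\frac{1}{n}\int_{0}^{n}(\overline{X}_{s}^{i}\overline{X}_{s}^{j}-\mathbb{E}(\overline{X}_{0}^{i}\overline{X}_{0}^{j}))\,ds$ plus Borel--Cantelli, and then pass from integers to arbitrary $t$ using the Hölder control of Proposition~\ref{Holder-X-lem}, exactly as was done for the Lévy area in Section~\ref{sec-longtime}.
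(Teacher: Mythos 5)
Your proposal is correct and follows essentially the same route as the paper: reduce to the stationary version $\overline{X}_{t}=\sigma\int_{-\infty}^{t}e^{-\Gamma(t-s)}dB_{s}^{H}$, invoke the ergodic theorem for $\overline{X}_{s}\otimes\overline{X}_{s}$, and identify the limit as $\mathbb{E}(\overline{X}_{0}\otimes\overline{X}_{0})$. The only differences are cosmetic: you diagonalize $\Gamma$ first and read off $r_{i}(0)=\sigma^{2}\lambda_{i}^{-2H}HG(2H)$ from the covariance expansion (and you justify the passage from $X$ to $\overline{X}$ more carefully via the Pickands bound), whereas the paper works with the matrix exponential directly and computes the second moment by a 2D integration by parts against the fBM covariance.
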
 \begin{proof} Define the process
\begin{equation}
\overline{X}_{t}:=\sigma\int_{-\infty}^{t}e^{-\Gamma(t-s)}dB_{s}^{H}.\label{fOU-ergd-high}
\end{equation}
Then the process $\overline{X}$ is stationary Gaussian process and
is ergodic (see Section \ref{longtime-subsec}). By the ergodic
theorem (see \cite{CKM03}), we have that 
\begin{equation}
\lim_{t\to\infty}\frac{1}{t}\int_{0}^{t}\overline{X}_{s}\otimes\overline{X}_{s}ds=\mathbb{E}(\overline{X}_{0}\otimes\overline{X}_{0}),\ a.s..
\end{equation}
Since
\[
X_{t}=\overline{X}_{t}-e^{-\Gamma t}\overline{X}_{0},
\]
so that
\begin{equation}
\lim_{t\to\infty}\frac{1}{t}\int_{0}^{t}X_{s}\otimes X_{s}ds=\mathbb{E}(\overline{X}_{0}\otimes\overline{X}_{0}),\ a.s.
\end{equation}
For the right hand side, applying integration by parts, we have that 
\[
\begin{split}\mathbb{E}(\overline{X}_{0}\otimes\overline{X}_{0}) & =\sigma^{2}\mathbb{E}\left(\int_{-\infty}^{0}e^{\Gamma s}dB_{s}^{H}\right)\otimes\left(\int_{-\infty}^{0}e^{\Gamma s}dB_{s}^{H}\right)\\
 & =\sigma^{2}\Gamma^{2}\mathbb{E}\left(\int_{-\infty}^{0}\int_{-\infty}^{0}e^{\Gamma(s+u)}(B_{s}^{H}\otimes B_{u}^{H})duds\right)\\
 & =\sigma^{2}\Gamma^{2}\int_{0}^{\infty}\int_{0}^{\infty}e^{-\Gamma(s+u)}\frac{1}{2}I(s^{2H}+u^{2H}-|s-u|^{2H})duds\\
 & =\sigma^{2}\Gamma\int_{0}^{\infty}x^{2H}e^{-\Gamma x}dx,
\end{split}
\]
and
\[
\Gamma\int_{0}^{\infty}x^{2H}e^{-\Gamma x}dx=H\int_{0}^{\infty}x^{2H-1}e^{-\Gamma x}dx.
\]
Thus we have prove this lemma. \end{proof}

\begin{prop}\label{limit-Ito-xdb} Suppose that the stochastic process $X_{t}$
is the fOU process to stochastic differential equation (\ref{fOU-h-3})
and $\Gamma$ is symmetric and positive-definite. Then
\[
\frac{1}{t}\int_{0}^{t}X_{s}\otimes d_{\mathfrak{R}_{1}}\mathbf{B}^{H,\text{It\^o}}\to0,\ a.s.,
\]
where $\mathbf{B}^{H,\text{It\^o}}$ is It\^o type rough path enhancement
of fBM $B_{t}^{H}$ as in Section \ref{sec-RP} with $H\in(\frac{1}{3},\frac{1}{2}]$.
\end{prop}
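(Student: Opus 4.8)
The plan is to reduce the claim to the long-time behaviour of the \emph{Stratonovich} L\'evy area, already settled in Theorem~\ref{limit-Strat-xdx}, together with a purely deterministic analysis of the growth of $\varphi^{\gamma}$. Since we have normalised $\Sigma=\sigma I$, which commutes with $\Gamma$, the identity established in the subsection on zero expectation gives
\[
\int_{0}^{t}X_{s}\otimes d_{\mathfrak{R}_{1}}\mathbf{B}^{H,\gamma}=\int_{0}^{t}X_{s}\otimes\circ d_{\mathfrak{R}_{1}}\mathbf{B}^{H,\mathrm{Str}}-\sigma\varphi^{\gamma}(t).
\]
Thus it suffices to show that $\frac{1}{t}\int_{0}^{t}X_{s}\otimes\circ d_{\mathfrak{R}_{1}}\mathbf{B}^{H,\mathrm{Str}}$ and $\frac{\sigma}{t}\varphi^{\gamma}(t)$ converge almost surely to the \emph{same} limit.

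For the Stratonovich integral I would invoke the chain rule of Lemma~\ref{chain-lem} in its Stratonovich form,
\[
\sigma\int_{0}^{t}X_{s}\otimes\circ d_{\mathfrak{R}_{1}}\mathbf{B}^{H,\mathrm{Str}}=\int_{0}^{t}X_{s}\otimes\circ d_{\mathfrak{R}_{1}}\mathbf{X}+\left(\int_{0}^{t}X_{s}\otimes X_{s}\,ds\right)\Gamma^{T},
\]
then divide by $\sigma t$ and let $t\to\infty$. By Theorem~\ref{limit-Strat-xdx} the first term on the right vanishes almost surely (the case $H=\frac12$ being the classical Ornstein--Uhlenbeck fact), and by Proposition~\ref{limit-x2} the second term tends almost surely to $\frac{1}{\sigma}C_{1}(H)\Gamma^{T}$, where $C_{1}(H)=\sigma^{2}H\int_{0}^{\infty}x^{2H-1}e^{-\Gamma x}\,dx$. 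Hence $\frac{1}{t}\int_{0}^{t}X_{s}\otimes\circ d_{\mathfrak{R}_{1}}\mathbf{B}^{H,\mathrm{Str}}\to\frac{1}{\sigma}C_{1}(H)\Gamma^{T}$ almost surely.

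It then remains to compute $\lim_{t\to\infty}\frac{1}{t}\varphi^{\gamma}(t)$, a deterministic matter. Recall $\varphi^{\gamma}(t)=\frac{1}{2}It^{2H}-U^{\gamma}(t)$. Since $H\le\frac12$, the contribution $\frac{1}{2t}It^{2H}$ tends to $0$ when $H<\frac12$ and equals $\frac12 I$ when $H=\frac12$ (in which case $U^{\gamma}\equiv0$); I would treat both cases uniformly. Writing $U^{\gamma}(t)=H\Gamma\int_{0}^{t}f(s)\,ds$ with $f(s)=\int_{0}^{s}e^{-\Gamma v}\left(s^{2H-1}-v^{2H-1}\right)dv$ after the substitution $v=s-u$, and splitting $\int_{0}^{s}=\int_{0}^{\infty}-\int_{s}^{\infty}$, the tail $\int_{s}^{\infty}$ is exponentially small in $s$ and $s^{2H-1}\int_{0}^{\infty}e^{-\Gamma v}\,dv\to0$; hence $f(s)\to-\int_{0}^{\infty}e^{-\Gamma v}v^{2H-1}\,dv$ as $s\to\infty$, and a Ces\`aro argument yields $\frac{1}{t}U^{\gamma}(t)\to-H\Gamma\int_{0}^{\infty}e^{-\Gamma v}v^{2H-1}\,dv$, so that $\frac{1}{t}\varphi^{\gamma}(t)\to H\Gamma\int_{0}^{\infty}e^{-\Gamma v}v^{2H-1}\,dv$.

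Finally, since $\Gamma$ is symmetric and positive-definite, $\Gamma^{T}=\Gamma$ and $\Gamma$ commutes with $\int_{0}^{\infty}e^{-\Gamma v}v^{2H-1}\,dv$, so
\[
\frac{\sigma}{t}\varphi^{\gamma}(t)\;\to\;\sigma H\Gamma\int_{0}^{\infty}e^{-\Gamma v}v^{2H-1}\,dv=\sigma H\left(\int_{0}^{\infty}e^{-\Gamma x}x^{2H-1}\,dx\right)\Gamma=\frac{1}{\sigma}C_{1}(H)\Gamma^{T},
\]
which is exactly the limit found for the Stratonovich term; subtracting, $\frac{1}{t}\int_{0}^{t}X_{s}\otimes d_{\mathfrak{R}_{1}}\mathbf{B}^{H,\gamma}\to0$ almost surely. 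The main obstacle is the third step: the careful substitution and exponential tail estimate for the matrix exponential, the uniform treatment of $H<\frac12$ versus $H=\frac12$, and above all verifying that the deterministic limit of $\frac{\sigma}{t}\varphi^{\gamma}(t)$ matches $\frac{1}{\sigma}C_{1}(H)\Gamma^{T}$ \emph{exactly} --- without this precise cancellation the argument would only give convergence to a nonzero matrix. Everything else follows directly from Lemma~\ref{chain-lem}, Theorem~\ref{limit-Strat-xdx} and Proposition~\ref{limit-x2}.
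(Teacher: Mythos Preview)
Your proposal is correct and follows essentially the same route as the paper: split the It\^o integral into the Stratonovich integral minus $\sigma\varphi^{\gamma}(t)$, rewrite the Stratonovich term via Lemma~\ref{chain-lem}, apply Theorem~\ref{limit-Strat-xdx} and Proposition~\ref{limit-x2}, and check that the deterministic limit $\frac{1}{t}\varphi^{\gamma}(t)\to H\Gamma\int_{0}^{\infty}e^{-\Gamma v}v^{2H-1}\,dv$ produces an exact cancellation. The only difference is cosmetic: the paper obtains the limit of $\varphi^{\gamma}(t)/t$ by first integrating $U^{\gamma}$ explicitly (via the closed form $U^{\gamma}(t)=\frac12 I t^{2H}-H\int_{0}^{t}e^{-\Gamma s}s^{2H-1}ds-H\Gamma t\int_{0}^{t}e^{-\Gamma s}s^{2H-1}ds+H\Gamma\int_{0}^{t}e^{-\Gamma s}s^{2H}ds$) and then reading off the linear-in-$t$ term, whereas you use the substitution $v=s-u$ together with a Ces\`aro argument on the inner integral --- both yield the same value.
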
 \begin{proof} Applying integration by parts, we have
\begin{equation}
X_{t}=\sigma\int_{0}^{t}e^{-\Gamma(t-s)}dB_{s}^{H}=\sigma\left(B_{t}^{H}-\Gamma\int_{0}^{t}e^{-\Gamma(t-s)}B_{s}^{H}ds\right).
\end{equation}
By the definitions of the It\^o integration and the Stratonovich integration with
respect to fBM for the fOU process (see rough differential equation (\ref{RDE})),
we have that 
\begin{equation}
\int_{0}^{t}X_{s}\otimes d_{\mathfrak{R}_{1}}\mathbf{B}^{H,\text{It\^o}}=\int_{0}^{t}X_{s}\otimes\circ d_{\mathfrak{R}_{1}}\mathbf{B}^{H,\text{Str}}-\sigma\varphi(t),\label{chain-xdb}
\end{equation}
where
\[
\varphi(t)=\frac{1}{2}It^{2H}-U(t),
\]
and
\[
U(t)=H\Gamma\int_{0}^{t}\int_{0}^{s}e^{-\Gamma(s-u)}(s^{2H-1}-(s-u)^{2H-1})duds.
\]
The first term on the right hand side is defined as Stratonovich
integral, and has the following expression (by Lemma \ref{chain-lem})
\begin{equation}
\sigma\int_{0}^{t}X_{s}\otimes\circ d_{\mathfrak{R}_{1}}\mathbf{B}^{H,\text{Str}}=\int_{0}^{t}X_{s}\otimes\circ d_{\mathfrak{R}_{1}}\mathbf{X}+\Gamma\int_{0}^{t}X_{s}\otimes X_{s}ds.\label{chain-strat}
\end{equation}

Now we represent $U(t)$ as
\[
\begin{split}U(t) & =\frac{1}{2}It^{2H}-H\int_{0}^{t}e^{-\Gamma s}s^{2H-1}ds-H\Gamma t\int_{0}^{t}e^{-\Gamma s}s^{2H-1}ds\\
 & \ \ \ \ +H\Gamma\int_{0}^{t}e^{-\Gamma s}s^{2H}ds,
\end{split}
\]
and we have that 
\[
\begin{split}\varphi(t) & =H\int_{0}^{t}e^{-\Gamma s}s^{2H-1}ds+H\Gamma t\int_{0}^{t}e^{-\Gamma s}s^{2H-1}ds\\
 & \ \ \ \ -H\Gamma\int_{0}^{t}e^{-\Gamma s}s^{2H}ds,
\end{split}
\]
Since $\int_{0}^{t}e^{-\Gamma s}s^{\alpha-1}ds\uparrow\int_{0}^{\infty}e^{-\Gamma s}s^{\alpha-1}ds\leq C$
as $t\to\infty$,
\begin{equation}
\lim_{t\to\infty}\frac{1}{t}\varphi(t)=H\Gamma\int_{0}^{\infty}e^{-\Gamma s}s^{2H-1}ds,\ a.s.\label{limit-phi}
\end{equation}

Then Combining Equations (\ref{chain-xdb}), (\ref{chain-strat}), 
Theorem \ref{limit-Strat-xdx}, and Proposition \ref{limit-x2}, for
\[
\sigma\int_{0}^{t}X_{s}\otimes d_{\mathfrak{R}_{1}}\mathbf{B}^{H,\text{It\^o}}=\int_{0}^{t}X_{s}\otimes\circ d_{\mathfrak{R}_{1}}\mathbf{X}+\Gamma\int_{0}^{t}X_{s}\otimes X_{s}ds-\sigma^{2}\varphi(t),
\]
we have that 
\[
\lim_{t\to\infty}\frac{\sigma}{t}\int_{0}^{t}X_{s}\otimes d_{\mathfrak{R}_{1}}\mathbf{B}^{H,\text{It\^o}}=\Gamma C_{1}(H)-\sigma^{2}H\Gamma\int_{0}^{\infty}e^{-\Gamma s}s^{2H-1}ds=0,\ a.s..
\]
This concludes the proposition. \end{proof}

As a corollary of Theorem \ref{limit-Strat-xdx}, now we have the
following statement, in which the integral is in the It\^o sense: \begin{cor}\label{limit-Ito-xdx}
Suppose that the stochastic process $X_{t}$ is the fOU process to stochastic
differential equation (\ref{fOU-h-3}) and $\Gamma$ is symmetric
and positive-definite. Then
\begin{equation}
\frac{1}{t}\int_{0}^{t}X_{s}\otimes d_{\mathfrak{R}_{1}}\mathbf{X}\to C_{2}(H),\ a.s.,
\end{equation}
where the above integral is in the It\^o sense. \end{cor}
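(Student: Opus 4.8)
The plan is to reduce the It\^o L\'evy area $\int_0^t X_s\otimes d_{\mathfrak{R}_1}\mathbf{X}$ to the Stratonovich L\'evy area plus an explicit deterministic correction, and then divide by $t$ and pass to the limit using results already established. Concretely, I would apply the chain rule of Lemma \ref{chain-lem} twice, once with It\^o rough integrals and once with Stratonovich rough integrals, to get
\[
\int_0^t X_s\otimes d_{\mathfrak{R}_1}\mathbf{X}=-\Big(\int_0^t X_s\otimes X_s\,ds\Big)\Gamma^T+\sigma\int_0^t X_s\otimes d_{\mathfrak{R}_1}\mathbf{B}^{H,\gamma},
\]
\[
\int_0^t X_s\otimes\circ d_{\mathfrak{R}_1}\mathbf{X}=-\Big(\int_0^t X_s\otimes X_s\,ds\Big)\Gamma^T+\sigma\int_0^t X_s\otimes\circ d_{\mathfrak{R}_1}\mathbf{B}^{H,\mathrm{Str}}.
\]
Subtracting these and using the defining relation between the two rough path enhancements of the fBM, $\int_0^t X_s\otimes d_{\mathfrak{R}_1}\mathbf{B}^{H,\gamma}=\int_0^t X_s\otimes\circ d_{\mathfrak{R}_1}\mathbf{B}^{H,\mathrm{Str}}-\sigma\varphi^\gamma(t)$ (see (\ref{chain-xdb})), the two Lebesgue terms cancel and one is left with the clean identity
\[
\int_0^t X_s\otimes d_{\mathfrak{R}_1}\mathbf{X}=\int_0^t X_s\otimes\circ d_{\mathfrak{R}_1}\mathbf{X}-\sigma^2\varphi^\gamma(t).
\]

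Next I would divide this identity by $t$ and let $t\to\infty$. The first term on the right vanishes almost surely by Theorem \ref{limit-Strat-xdx} (the long-time asymptotic of the Stratonovich L\'evy area of an fOU process with symmetric positive-definite drift), while the deterministic term converges by the elementary limit $\tfrac1t\varphi^\gamma(t)\to H\Gamma\int_0^\infty e^{-\Gamma s}s^{2H-1}\,ds$ recorded in (\ref{limit-phi}). Hence
\[
\frac1t\int_0^t X_s\otimes d_{\mathfrak{R}_1}\mathbf{X}\longrightarrow C_2(H):=-\sigma^2H\Gamma\int_0^\infty s^{2H-1}e^{-\Gamma s}\,ds\qquad a.s.,
\]
and since $\Gamma$ commutes with $\int_0^\infty s^{2H-1}e^{-\Gamma s}\,ds$ this constant may equivalently be written $C_2(H)=-C_1(H)\Gamma=-C_1(H)\Gamma^T$, with $C_1(H)$ as in Proposition \ref{limit-x2}. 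An alternative derivation that avoids the subtraction is to start directly from the It\^o chain rule (the first display above), divide by $t$, and invoke Proposition \ref{limit-x2} for the $\int X\otimes X$ term together with Proposition \ref{limit-Ito-xdb} for the $\int X\otimes d_{\mathfrak{R}_1}\mathbf{B}^{H,\gamma}$ term; this gives $C_2(H)=-C_1(H)\Gamma^T$ once more.

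Since every analytic ingredient is already in place, the only points requiring care are bookkeeping ones: one must check that Lemma \ref{chain-lem} really does apply in both the It\^o and Stratonovich senses (it is stated so), that the discrepancy between the It\^o and Stratonovich rough integrals of $X$ against $\mathbf{X}$ is exactly the first-level correction $-\sigma^2\varphi^\gamma(t)$ rather than something involving the second level of the enhancement, and that the commutativity hypothesis on $\Gamma$ and $\Sigma$ used to pass between $\mathbf{B}^{H,\mathrm{Str}}$ and $\mathbf{B}^{H,\gamma}$ is in force (it is, after reducing to $\Sigma=\sigma I$). I do not expect a genuine obstacle: the statement is a corollary precisely because Theorem \ref{limit-Strat-xdx}, Proposition \ref{limit-x2} and Proposition \ref{limit-Ito-xdb} carry all the weight, and the remaining work is algebraic rearrangement plus identification of the constant $C_2(H)$.
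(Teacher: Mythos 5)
Your proposal is correct and follows essentially the same route as the paper: both reduce to the identity $\int_{0}^{t}X_{s}\otimes d_{\mathfrak{R}_{1}}\mathbf{X}=\int_{0}^{t}X_{s}\otimes\circ d_{\mathfrak{R}_{1}}\mathbf{X}-\sigma^{2}\varphi^{\gamma}(t)$ and then conclude via Theorem \ref{limit-Strat-xdx} and the limit (\ref{limit-phi}). The only cosmetic difference is that you obtain this identity by subtracting the It\^o and Stratonovich instances of Lemma \ref{chain-lem}, whereas the paper reads it off directly from the almost-rough-path expansion $\mathbb{X}_{s,t}\simeq\sigma^{2}(\mathbb{B}_{s,t}^{H,\mathrm{Str}}-\varphi_{s,t}^{\gamma})$; the two derivations are equivalent.
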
 \begin{proof}
As we can see from the definition of the rough integral and Lemma \ref{chain-lem}, 
\[
\begin{split}\int_{s}^{t}X\otimes d_{\mathfrak{R}_{1}}\mathbf{X} & \simeq X_{s}X_{s,t}+\mathbb{X}_{s,t}\\
 & \simeq X_{s}X_{s,t}+\sigma^{2}\mathbb{B}_{s,t}^{H,\text{It\^o}}\\
 & \simeq X_{s}X_{s,t}+\sigma^{2}(\mathbb{B}_{s,t}^{H,\text{Str}}-\varphi_{s,t})\\
 & \simeq\int_{s}^{t}X\otimes\circ d_{\mathfrak{R}_{1}}\mathbf{X}-\sigma^{2}\varphi_{s,t}
\end{split}
\]
for any $(s,t)\in\Delta$. Thus we have that 
\begin{equation}
\int_{0}^{t}X_{s}\otimes d_{\mathfrak{R}_{1}}\mathbf{X}=\int_{0}^{t}X_{s}\otimes\circ d_{\mathfrak{R}_{1}}\mathbf{X}-\sigma^{2}\varphi(t).
\end{equation}
By Theorem \ref{limit-Strat-xdx} and the limit in Equation (\ref{limit-phi}),
we get that 
\begin{equation}
\lim_{t\to\infty}\frac{1}{t}\int_{0}^{t}X_{s}\otimes d_{\mathfrak{R}_{1}}\mathbf{X}=-\sigma^{2}H\Gamma\int_{0}^{\infty}e^{-\Gamma s}s^{2H-1}ds=:C_{2}(H),\ a.s.
\end{equation}
In addition, by the definition of $C_{1}(H)$, we also have the relation
between $C_{2}(H)$ and $C_{1}(H)$ as $C_{2}(H)=-\Gamma C_{1}(H)$.
\end{proof}

Now we have strong consistency of the rough path estimator $\widehat{\Gamma}_{t}$
as $t$ tends to infinity. \begin{thm} Suppose that $\Gamma$ is a parametric
matrix and that it is symmetric and positive-definite. Let $\widehat{\Gamma}_{t}$
be the rough path estimator as (\ref{estimator}) of $\Gamma$ for
the stochastic differential equation (\ref{fOU-h-3}). Then
\begin{equation}
\widehat{\Gamma}_{t}\to\Gamma,\ a.s.,\ \text{as}\ t\to\infty.
\end{equation}
\end{thm}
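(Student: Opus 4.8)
The plan is to read off the conclusion from the algebraic identity in Corollary~\ref{cor1} together with the two almost sure limits established in Propositions~\ref{limit-x2} and~\ref{limit-Ito-xdb}, by a continuous-mapping (Slutsky-type) argument. First I would recall from Corollary~\ref{cor1} that, on the event where $\mathcal{L}_t(X)=\int_0^t X_s\otimes X_s\,ds$ is invertible,
\begin{equation}
\widehat{\Gamma}_t^T=\Gamma^T-\left(\frac1t\int_0^t X_s\otimes X_s\,ds\right)^{-1}\left(\frac1t\int_0^t X_s\otimes d_{\mathfrak{R}_1}\mathbf{B}^{H,\gamma}\right),
\end{equation}
where the two cancelling factors $1/t$ have been inserted so that numerator and denominator each have a limit. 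It therefore suffices to show that the subtracted term tends to $0$ almost surely.

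Next I would invoke Proposition~\ref{limit-x2}: almost surely $\frac1t\int_0^t X_s\otimes X_s\,ds\to C_1(H)=\sigma^2 H\int_0^\infty x^{2H-1}e^{-\Gamma x}\,dx$. Since $\Gamma$ is symmetric and positive-definite, each $e^{-\Gamma x}$ is symmetric positive-definite, and integrating a positive-definite-matrix-valued function against the positive measure $x^{2H-1}\,dx$ on $(0,\infty)$ produces a symmetric positive-definite, hence invertible, matrix $C_1(H)$. By continuity of the matrix-inversion map on the open set of invertible matrices, there is almost surely a finite (random) time $T_0$ beyond which $\mathcal{L}_t(X)$ is invertible, and $\left(\frac1t\int_0^t X_s\otimes X_s\,ds\right)^{-1}\to C_1(H)^{-1}$ a.s.; in particular the estimator $\widehat{\Gamma}_t$ is well defined for all large $t$ along almost every sample path.

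Then I would apply Proposition~\ref{limit-Ito-xdb}, which gives $\frac1t\int_0^t X_s\otimes d_{\mathfrak{R}_1}\mathbf{B}^{H,\gamma}\to0$ a.s. Since matrix multiplication is jointly continuous, the product of these two a.s.-convergent sequences converges a.s. to $C_1(H)^{-1}\cdot 0=0$, whence $\widehat{\Gamma}_t^T\to\Gamma^T$, i.e. $\widehat{\Gamma}_t\to\Gamma$, almost surely. Finally I would note that the reduction to the case $\Sigma=\sigma I$ carried out after~(\ref{estimator}) via the rotation $X\mapsto\Sigma^{-1}X$ shows that working with~(\ref{fOU-h-3}) in the form treated above costs no generality.

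I expect no serious obstacle here: the substantive content --- the ergodic limit of $\frac1t\int_0^t X_s\otimes X_s\,ds$ and, above all, the vanishing of $\frac1t\int_0^t X_s\otimes d_{\mathfrak{R}_1}\mathbf{B}^{H,\gamma}$, which rests on the long-time asymptotics of the L\'evy area (Theorem~\ref{limit-Strat-xdx} and Corollary~\ref{limit-Ito-xdx}) --- has already been established. The only point deserving a line of care is verifying that $C_1(H)$ is genuinely invertible, so that the inversion map is continuous at the limit and $\mathcal{L}_t(X)^{-1}$ is eventually defined pathwise; this is immediate from the positive-definiteness of $\Gamma$.
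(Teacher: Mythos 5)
Your proposal is correct and follows essentially the same route as the paper: Corollary~\ref{cor1} combined with the almost sure limits of Propositions~\ref{limit-x2} and~\ref{limit-Ito-xdb} and a Slutsky-type continuous-mapping argument. The extra remark verifying that $C_1(H)$ is invertible (so that matrix inversion is continuous at the limit and $\mathcal{L}_t(X)^{-1}$ is eventually well defined pathwise) is a detail the paper leaves implicit, but it does not change the argument.
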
 \begin{proof} Applying Proposition \ref{limit-x2} and
Proposition \ref{limit-Ito-xdb}, and by the Slutsky Theorem, we have that 
\[
\left(\frac{1}{t}\int_{0}^{t}X_{s}\otimes X_{s}ds\right)^{-1}\left(\frac{1}{t}\int_{0}^{t}X_{s}\otimes d_{\mathfrak{R}_{1}}\mathbf{B}^{H}\right)\to0,\ a.s.
\]
as $t$ goes to infinity. Hence, by Corollary \ref{cor1}, the rough
path estimator $\widehat{\Gamma}_{t}$ almost surely converges to
$\Gamma$. \end{proof}

\begin{rem} Suppose that we take the stochastic integral $\int_{0}^{t}X_{s}\otimes d_{\mathfrak{R}_{1}}\mathbf{X}$
in the rough path estimator $\widehat{\Gamma}_{t}$ (equation (\ref{estimator}))
as the Stratonovich rough integral rather than the It\^o rough integral as above.
We can see that
\begin{equation}
\widehat{\Gamma}_{t}\to0,\ a.s.,\ \text{as}\ t\to\infty,
\end{equation}
by Theorem \ref{limit-Strat-xdx} and Proposition \ref{limit-x2}.
That is to say, we cannot use the Stratonovich rough integral to do this
estimation problem. \end{rem}

\begin{rem}
The explicit dependence of the L\'evy area of fOU processes on the drift parameter $\Gamma$ can make it challenging to apply the estimator in practical observations. However, as suggested by equation (\ref{continuous-Gamma}), we can treat it as an equation for $\Gamma$ and solve it iteratively.
\end{rem}

\subsection{Pathwise stability}

In this subsection, we will show that our rough path estimator is
pathwise stable and robust. Note that $\widehat{\Gamma}_{T}$ is a
functional on the path space $C([0,T],\mathbb{R}^{d})$, or exactly
on the rough path space $\Omega_{p}([0,T],\mathbb{R}^{d})$. For every
observation sample path $X(\omega)$ or rough path enhancement $\mathbf{X}(\omega)=(X(\omega),\mathbb{X}(\omega))$,
one has a corresponding estimator $\widehat{\Gamma}_{T}(X(\omega))$
or $\widehat{\Gamma}_{T}(\mathbf{X}(\omega))=\widehat{\Gamma}_{T}((X(\omega),\mathbb{X}(\omega)))$.
In what follows, we will use the rough path notation rather than the 
sample path, since our continuous rough path estimator depends on
$\mathbf{X}(\omega)=(X(\omega),\mathbb{X}(\omega))$ rather than just
the first level sample path $X(\omega)$.

A natural question regarding the robustness of the estimator arises: if two observations
$\mathbf{X}$ and $\widetilde{\mathbf{X}}$ are very close in some
sense, e.g. the uniform distance or the $p$-variation distance etc., does
it give rise to close estimations $\widehat{\Gamma}_{T}(\mathbf{X})\thickapprox\widehat{\Gamma}_{T}(\widetilde{\mathbf{X}})$?
In other words, is the estimator $\widehat{\Gamma}_{T}(\cdot)$ continuous
in some distance?

Actually, the rough path idea gives us a good solution to this problem.
As is well-known, in rough path space, rough integration is continuous
with respect to $p$-variation distance. Now we first recall the $p$-variation
rough path distance $d_{p}$:
\begin{equation}
d_{p}(\mathbf{X},\mathbf{Y})=\max_{i=1,2}\sup_{\mathcal{P}}\left(\sum_{\ell}|\mathbf{X}_{t_{\ell-1},t_{\ell}}^{i}-\mathbf{Y}_{t_{\ell-1},t_{\ell}}^{i}|^{\frac{p}{i}}\right)^{\frac{i}{p}}.
\end{equation}
Here $\mathbf{X}=(\mathbf{X}^{1},\mathbf{X}^{2})$ and $\mathbf{Y}=(\mathbf{Y}^{1},\mathbf{Y}^{2})$
are two rough paths in rough path space $\Omega_{p}([0,T],\mathbb{R}^{d})$,
and $\mathcal{P}$ is any partition of interval $[0,T]$.

Now we give the continuity of estimator $\widehat{\Gamma}_{T}(\cdot)$
under $p$-variation distance $d_{p}$.

\begin{thm} Let $X$ be a fOU process driven by fBM with the Hurst parameter
$H\in(\frac{1}{3},\frac{1}{2}]$ and let $(X,\mathbb{X})$ be the It\^o
rough path enhancement. Then rough path estimator $\widehat{\Gamma}_{T}:(X(\omega),\mathbb{X}(\omega))\to\widehat{\Gamma}_{T}((X(\omega),\mathbb{X}(\omega)))$
is continuous with respect to $p$-variation distance $d_{p}$ for
$\frac{1}{H}<p<3$. \end{thm}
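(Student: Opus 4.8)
The plan is to exhibit $\widehat{\Gamma}_T$ as a composition of maps on the rough path space $\Omega_p([0,T],\mathbb{R}^d)$, each continuous for $d_p$ on the domain where it matters, and then to use that compositions of continuous maps are continuous. After the rotation that reduces to $\Sigma=\sigma I$, formula (\ref{estimator}) gives
\[
\widehat{\Gamma}_T^{T}=-\,\mathcal{L}_T(\mathbf{X})^{-1}\,S_T(\mathbf{X}),\qquad \mathcal{L}_T(\mathbf{X})=\int_0^T X_s\otimes X_s\,ds,\qquad S_T(\mathbf{X})=\int_0^T X_s\otimes d_{\mathfrak{R}_1}\mathbf{X},
\]
and, as recorded for $\widehat{\Gamma}_t$ above, the It\^o rough integral here collapses to $S_T(\mathbf{X})=\mathbb{X}_{0,T}+X_0\otimes X_{0,T}$. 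Hence $\widehat{\Gamma}_T$ depends on $\mathbf{X}=(X,\mathbb{X})$ only through the first-level path $s\mapsto X_s$ on $[0,T]$ and the single second-level value $\mathbb{X}_{0,T}$. It therefore suffices to prove: (a) $\mathbf{X}\mapsto S_T(\mathbf{X})$ is continuous for $d_p$; (b) $\mathbf{X}\mapsto\mathcal{L}_T(\mathbf{X})$ is continuous for $d_p$; (c) $\mathcal{L}_T(\mathbf{X}(\omega))$ is almost surely invertible, so that $A\mapsto A^{-1}$ is continuous at that point; after which continuity of matrix multiplication and transposition finishes the argument.

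Step (a) is essentially immediate from the definition of $d_p$. For two rough paths sharing the same initial value $X_0$ (without loss of generality $X_0=0$, as elsewhere in the paper), testing the defining supremum against the trivial partition $\{0,T\}$ shows that $d_p$ dominates both $|X_{0,T}-\widetilde X_{0,T}|$ and $|\mathbb{X}_{0,T}-\widetilde{\mathbb{X}}_{0,T}|$, whence $|S_T(\mathbf{X})-S_T(\widetilde{\mathbf{X}})|\le(|X_0|+1)\,d_p(\mathbf{X},\widetilde{\mathbf{X}})$, i.e. $S_T$ is locally Lipschitz for $d_p$. (Had one instead kept the estimator with the general integrand, $S_T=\int_0^T g(X_s)\,d_{\mathfrak{R}_1}\mathbf{X}$ with $g$ linear, one would invoke the continuity of the rough integration operator $\int F:\Omega_p\to\Omega_p$ in $p$-variation, see \cite{LQ02} and \cite{FV10}, localizing to a $d_p$-ball on which the first-level paths stay in a fixed compact set with uniformly bounded $p$-variation so that the linear $g$ may be replaced by a compactly supported smooth function; this is the genuinely ``rough'' input, but the special structure here makes it superfluous.)

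For step (b) I would first note that $d_p$ controls the uniform distance of first-level paths on $[0,T]$: with coincident initial points $\|X-\widetilde X\|_{\infty;[0,T]}\le d_p(\mathbf{X},\widetilde{\mathbf{X}})$, while $\|X\|_{\infty;[0,T]}\le|X_0|+\|X\|_{p\text{-var};[0,T]}<\infty$ since $\mathbf{X}$ is a $p$-rough path. On the ball $\{\widetilde{\mathbf{X}}:d_p(\mathbf{X},\widetilde{\mathbf{X}})\le1\}$ this yields the elementary estimate $\|\mathcal{L}_T(\mathbf{X})-\mathcal{L}_T(\widetilde{\mathbf{X}})\|\le T\big(\|X\|_{\infty}+\|\widetilde X\|_{\infty}\big)\|X-\widetilde X\|_{\infty}\le C_{\mathbf{X},T}\,d_p(\mathbf{X},\widetilde{\mathbf{X}})$, so $\mathcal{L}_T$ is continuous (indeed locally Lipschitz) for $d_p$, involving only the first level.

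For step (c), $\mathcal{L}_T(\mathbf{X})=\int_0^T X_sX_s^{T}\,ds$ is symmetric and nonnegative, and almost surely strictly positive definite, because the non-degeneracy of $\Sigma$ forces the Gaussian path $X(\omega)$ to leave every proper subspace of $\mathbb{R}^d$ over $[0,T]$ with probability one; this positivity is precisely what makes $\widehat{\Gamma}_T$ well defined in (\ref{estimator}). Since matrix inversion is locally Lipschitz on the open set of invertible matrices, there is a $d_p$-neighbourhood of $\mathbf{X}(\omega)$ on which $\mathcal{L}_T$ stays invertible and $\widetilde{\mathbf{X}}\mapsto\mathcal{L}_T(\widetilde{\mathbf{X}})^{-1}$ is continuous; combining (a), (b), (c) gives that $\mathbf{X}\mapsto-\big(\mathcal{L}_T(\mathbf{X})^{-1}S_T(\mathbf{X})\big)^{T}$ is continuous for $d_p$ whenever $\tfrac1H<p<3$, which is the claim. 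I expect the main obstacle to be conceptual rather than technical: recognising that the estimator probes the second level of $\mathbf{X}$ only through the single value $\mathbb{X}_{0,T}$ (so that mere $d_p$-closeness, not any finer information, transfers), together with checking the almost sure non-degeneracy of $\mathcal{L}_T$ that legitimises the inversion; the remaining estimates are routine, and one should note that the statement asks only for continuity at each admissible rough path, not a uniform modulus over all of $\Omega_p$.
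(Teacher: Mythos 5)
Your proof is correct, but it takes a genuinely different route from the paper, whose entire proof is a one-line appeal to Theorem 5.3.1 of Lyons and Qian \cite{LQ02} (the continuity of the rough integration operator in $p$-variation). You instead give a self-contained elementary argument, and the key observation that makes this possible is the one you flag yourself: because the integrand in $S_T$ is the identity one-form, Chen's identity makes the Riemann--Stieltjes sums telescope exactly, so $S_T(\mathbf{X})=\mathbb{X}_{0,T}+X_0\otimes X_{0,T}$ with no limiting procedure, and testing the supremum in $d_p$ against the partitions $\{0,T\}$ and $\{0,s,T\}$ immediately gives local Lipschitz continuity of $S_T$ and of $\mathbf{X}\mapsto\mathcal{L}_T(\mathbf{X})$ (the latter living entirely on the first level). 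What your approach buys is transparency and an explicit (Lipschitz) modulus, at the price of being tied to the special linear structure of the estimator; the paper's citation buys generality (it would cover $\int F(X)\,d_{\mathfrak{R}}\mathbf{X}$ for nonlinear $F$, which is what the universal limit theorem is really for) at the price of invoking heavy machinery and, strictly speaking, a localization to handle the linear, non-compactly-supported integrand --- a point you also note. You additionally make explicit the almost sure invertibility of $\mathcal{L}_T$ needed to legitimise the inversion, which the paper leaves implicit throughout; your justification (a nonzero $v$ with $v^{T}\mathcal{L}_T v=0$ would force $v\cdot X_s\equiv 0$ on $[0,T]$, an event of probability zero for the non-degenerate Gaussian path) is sound, though it deserves the standard extra sentence reducing ``for all $v$'' to finitely many sample times. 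None of this constitutes a gap; your argument is complete for the statement as posed.
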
 \begin{proof} The statement is a corollary
of Theorem 5.3.1 of Lyons and Qian \cite{LQ02}. \end{proof}

\section{Rough path estimator based on high-frequency data}
\label{sec-DRPE}

% In previous sections, the estimator we have considered up to now is
% based on continuous observations. However, in the real world the process
% can be only observed at discrete time. Thus deriving an estimator
% based on discrete observations is necessary.
In the preceding sections, the estimator we have examined assumes access to continuous observations. However, in practical scenarios, the process is typically observed only at discrete intervals. Consequently, it is imperative to devise an estimator that operates on discrete data. We refer to the work \cite{WXY23}, in which the authors estimated the parameters of the fOU process using discrete-sampled observations in the one-dimensional case. In this section, based on our continuous
rough path estimator, we construct a discrete rough path estimator
and it still has favorable properties. We assume that the fOU process
$X$ can be enhanced to an It\^o rough path $\mathbf{X}=(1,X,\mathbb{X})$
as in Section \ref{sec-RP} and can be observed at discrete time $\{t_{\ell}=\ell h,\ell=0,1,2,\cdots,n\}$,
or equivalently we can get the discrete data $\{(X_{t_{0},t_{1}},\mathbb{X}_{t_{0},t_{1}}),(X_{t_{1},t_{2}},\mathbb{X}_{t_{1},t_{2}}),\cdots,(X_{t_{n-1},t_{n}},\mathbb{X}_{t_{n-1},t_{n}})\}$
in the It\^o sense as in Section \ref{sec-RP}. Here, $n$ is the sample size,
$h=h_{n}$ is the observation frequency, and $t:=nh$ is the time
horizon. We further assume that, as the sample size $n$ tends to infinity,
the observation frequency $h=h_{n}\to0$ and time horizon $t=nh\to\infty$.
In other words, the data is high-frequency. We should
give more assumptions to balance the rate of sample size $n$ and
the frequency $h$ in order to get the good estimator below. Now we give
the theorem of almost sure convergence for our high-frequency rough
path estimator.

\begin{thm}\label{thm-discrete-as} Suppose that the fOU process $X$, 
which is the solution to stochastic differential equation (\ref{fOU-h-2})
with $H\in(\frac{1}{3},\frac{1}{2}]$, can be observed at discrete
time $\{t_{\ell}=\ell h,\ell=0,1,2,\cdots,n\}$, and as the sample size
$n\to\infty$, $n$ and $h$ satisfy that 
\begin{equation}
nh\to\infty,\ h=h_{n}\to0,\ nh^{p}\to0,
\end{equation}
for some $p\in(1,\frac{1+H+\beta}{1+\beta})$, and $0<\beta<1$. Let
\begin{equation}\label{discrete-Gamma}
\widetilde{\Gamma}_{n}^{T}=-\left(\sum_{\ell=0}^{n}(X_{\ell h}\otimes X_{\ell h})h\right)^{-1}\left(\sum_{\ell=0}^{n-1}X_{\ell h}X_{\ell h,(\ell+1)h}+\mathbb{X}_{\ell h,(\ell+1)h}\right),
\end{equation}
where $\widetilde{\Gamma}^{T}$ denotes transpose of matrix $\widetilde{\Gamma}$.
Then
\begin{equation}
\widetilde{\Gamma}_{n}\to\Gamma,\ a.s.,
\end{equation}
as $n\to\infty$. \end{thm}

\begin{proof} Let
\[
\mathcal{L}_{nh}=\int_{0}^{nh}X_{u}\otimes X_{u}du,\ A_{nh}=\int_{0}^{nh}X_{u}\otimes d_{\mathfrak{R}_{1}}\mathbf{X},
\]
and
\[
\widetilde{\mathcal{L}}_{n}=\sum_{\ell=0}^{n}(X_{\ell h}\otimes X_{\ell h})h,\ \widetilde{A}_{n}=\sum_{\ell=0}^{n-1}X_{\ell h}X_{\ell h,(\ell+1)h}+\mathbb{X}_{\ell h,(\ell+1)h}.
\]

By Proposition \ref{limit-x2}, we know that
\begin{equation}
\frac{1}{nh}\mathcal{L}_{nh}=\frac{1}{nh}\int_{0}^{nh}X_{u}\otimes X_{u}du\to C_{1}(H),\ a.s.\ \mathrm{as}\ n\to\infty.\label{L_nh_limit}
\end{equation}
From Corollary \ref{limit-Ito-xdx}, we have that 
\begin{equation}
\frac{1}{nh}A_{nh}=\frac{1}{nh}\int_{0}^{nh}X_{u}\otimes d_{\mathfrak{R}_{1}}\mathbf{X}\to C_{2}(H),\ a.s.\ \mathrm{as}\ n\to\infty.\label{A_nh_limit}
\end{equation}
In what follows, we show that
\begin{equation}
\frac{1}{nh}\left(\mathcal{L}_{nh}-\widetilde{\mathcal{L}}_{n}\right)\to0,\ a.s.,\label{L_n_limit}
\end{equation}
and
\begin{equation}
\frac{1}{nh}\left(A_{nh}-\widetilde{A}_{n}\right)\to0,\ a.s..\label{A_n_limit}
\end{equation}
If so, combining (\ref{L_nh_limit}), (\ref{A_nh_limit}), (\ref{L_n_limit})
and (\ref{A_n_limit}), we can conclude this theorem, that is,
\[
\begin{split}-\widetilde{\mathcal{L}}_{n}^{-1}\widetilde{A}_{n} & =-\left(\frac{1}{nh}\left(\widetilde{\mathcal{L}}_{n}-\mathcal{L}_{nh}\right)+\frac{1}{nh}\mathcal{L}_{nh}\right)^{-1}\\
 & \ \ \ \ \times\left(\frac{1}{nh}\left(\widetilde{A}_{n}-A_{nh}\right)+\frac{1}{nh}A_{nh}\right)\\
 & \to-C_{1}(H)^{-1}C_{2}(H)=\Gamma,\ a.s..
\end{split}
\]
Now showing the limit (\ref{A_n_limit}), we have that 
\[
\begin{split}\frac{1}{nh}\left|A_{nh}-\widetilde{A}_{n}\right| & =\frac{1}{nh}\left|\int_{0}^{nh}X_{u}\otimes d_{\mathfrak{R}_{1}}\mathbf{X}-\sum_{\ell=0}^{n-1}(X_{\ell h}X_{\ell h,(\ell+1)h}+\mathbb{X}_{\ell h,(\ell+1)h})\right|\\
 & =\frac{1}{nh}\left|\sum_{\ell=0}^{n-1}\left(\int_{\ell h}^{(\ell+1)h}X_{u}\otimes d_{\mathfrak{R}_{1}}\mathbf{X}-(X_{\ell h}X_{\ell h,(\ell+1)h}+\mathbb{X}_{\ell h,(\ell+1)h})\right)\right|\\
 & \leq\frac{1}{nh}\sum_{\ell=0}^{n-1}\left|\int_{\ell h}^{(\ell+1)h}X_{u}\otimes d_{\mathfrak{R}_{1}}\mathbf{X}-(X_{\ell h}X_{\ell h,(\ell+1)h}+\mathbb{X}_{\ell h,(\ell+1)h})\right|.
\end{split}
\]
Since
\[
\mathbb{X}_{\ell h,(\ell+1)h}=\int_{\ell h}^{(\ell+1)h}X_{\ell h,u}\otimes d_{\mathfrak{R}_{1}}\mathbf{X}=\int_{\ell h}^{(\ell+1)h}X_{u}\otimes d_{\mathfrak{R}_{1}}\mathbf{X}-X_{\ell h}X_{\ell h,(\ell+1)h},
\]
we have that 
\[
\frac{1}{nh}\left|A_{nh}-\widetilde{A}_{n}\right|=0.
\]
For the limit (\ref{L_n_limit}),
\[
\begin{split}\frac{1}{nh}\left|\mathcal{L}_{nh}-\widetilde{\mathcal{L}}_{n}\right| & =\frac{1}{nh}\left|\int_{0}^{nh}X_{u}\otimes X_{u}du-\sum_{\ell=0}^{n}(X_{\ell h}\otimes X_{\ell h})h\right|\\
 & =\frac{1}{nh}\left|\sum_{\ell=0}^{n-1}\left(\int_{\ell h}^{(\ell+1)h}X_{u}\otimes X_{u}du-(X_{\ell h}\otimes X_{\ell h})h\right)\right|\\
 & \leq\frac{1}{nh}\sum_{\ell=0}^{n-1}\left|\int_{\ell h}^{(\ell+1)h}X_{u}\otimes X_{u}du-(X_{\ell h}\otimes X_{\ell h})h\right|.
\end{split}
\]
Let $F(X_{t})=X_{t}\otimes X_{t}$, and any $0\leq s<t\leq T$. Then, 
by Proposition \ref{Holder-F(X)-lem}, we get that
\[
\left|\int_{s}^{t}F(X_{u})du-F(X_{s})(t-s)\right|\leq CR_{T}T^{\beta}|t-s|^{1+\alpha},\ \forall\alpha\in(0,H).
\]
Taking that $s=\ell h$, $t=(\ell+1)h$, and $T=nh$, we have that 
\[
\left|\mathcal{L}_{nh}-\widetilde{\mathcal{L}}_{n}\right|\leq\sum_{\ell=0}^{n-1}CR_{nh}(nh)^{\beta}h^{1+\alpha}=CR_{nh}n^{1+\beta}h^{1+\alpha+\beta}=CR_{nh}\left(nh^{\frac{1+\alpha+\beta}{1+\beta}}\right)^{1+\beta}.
\]
By assumption, there exists a number $p\in(1,\frac{1+H+\beta}{1+\beta})$
such that $nh^{p}\to0$ and $nh\to\infty$ as $n\to\infty$. Also, $R_{nh}\to0,\ a.s.$.
Thus, we get that $\mathcal{L}_{nh}-\widetilde{\mathcal{L}}_{n}\to0,\ a.s..$
(we may assume that the components of fOU process $X$ are independent,
so we should make an orthogonal transformation for $X$.) Thus, we have
completed the proof of Theorem \ref{thm-discrete-as}. \end{proof}

\begin{rem}
The explicit dependence of the L\'evy area of fOU processes on the drift parameter $\Gamma$ can make it challenging to apply the estimator in practical observations. However, as suggested by equation (\ref{discrete-Gamma}), we can treat it as an equation for $\Gamma$ and solve it iteratively.
\end{rem}

\section*{Acknowledgements}
The authors thank the anonymous reviewers for their valuable comments and suggestions which helped improve and clarify this manuscript.

\section*{Conflict of Interest}
The authors declare no conflict of interest.

% \acknowledgements{\rm The authors thank the anonymous reviewers for their valuable comments and suggestions, which help to improve and clarify this manuscript.
% }

% \vskip 0.3cm {\bf Conflict of Interest}\quad The authors declare no
% conflict of interest.

%\conflictofInterest{\rm The author declares no conflict of interest.}
%%声明(文章只有一个作者)

%\conflictofInterest{\rm XX is an editorial board member for Acta Mathematica Scientia and was not involved in the editorial review or the decision to publish this article. All authors declare that there are no competing interests.}%%%%/editor-in-chief(主编需要)(作者是编委)XX换编委作者

%\conflictofInterest{\rm The authors declare no conflict of interest.}

%\newpage


\begin{thebibliography}{99}
\bibitem{Ait02} Y. A\"it Sahalia. Maximum likelihood estimation of
discretely sampled diffusions: a closed form approximation approach.
\textit{Econometrica}, \textbf{70}, 223-262 (2002).

\bibitem{AJ09} Y. A\"it-Sahalia, J. Jacod. Estimating the degree of
activity of jumps in high frequency financial data. \textit{Annals
of Statistics}, \textbf{37}, 2202-2244 (2009).

\bibitem{AJ10} Y. A\"it-Sahalia, J. Jacod. Is Brownian motion necessary
to model high frequency data? \textit{Annals of Statistics}, \textbf{38},
3093-3128 (2010).

\bibitem{AJ14} Y. A\"it-Sahalia, J. Jacod. High-Frequency Financial
Econometrics. \textit{Princeton University Press}, (2014).

\bibitem{AMZ11} Y. A\"it-Sahalia, P. Mykland, L. Zhang. Ultra high
frequency volatility estimation with dependent microstructure noise.
\textit{Journal of Econometrics}, \textbf{160}, 160-175 (2011).

%\bibitem{AKS62} M. Arat\'o, A. N. Kolmogorov, Y. G. Sinay. Estiamtion of the parameters of a complex stationary Gaussian Markov Process. {\it Dokl. Akad. Nauk.} SSSR {\bf 146}, 747-750 (1962).
\bibitem{BD15} I. Bailleul, J. Diehl. The Inverse Problem for Rough
Controlled Differential Equations. \textit{SIAM J. Control Optim.},
\textbf{53}, 5, 2762-2780 (2015).

%\bibitem{BFG16} C. Bayer, P. Friz, J. Gatheral. Pricing under rough volatility. {\it Quant. Finance}, {\bf 16}, 6, 887-904 2016.

%\bibitem{BFGMS17} C. Bayer, P. Friz, P. Gassiat, J. Martin, B. Stemper. A regularity structure for rough volatility. {\it ArXiv Mathematics e-prints (2017)}. arXiv:1710.07481v1.

%\bibitem{BCS11} B. Bercu, L. Coutin, N. Savy. Sharp large diviations for the fractional Ornstein-Uhlenbeck process. {\it Theory Probab. Appl.}, {\bf 55}, 4, 575-610 (2011).
\bibitem{BPR09} A. Beskos, O. Papaspiliopoulos, G. Roberts. Monte
Carlo maximum likelihood estimation for discretely observed diffusion
processes. \textit{Annals of Statistics}, \textbf{37}, 223-245 (2009).

\bibitem{CFS15} R. Carmona, J. Fouque, L. Sun. Mean field games and
systemic risk. \textit{Commun. Math. Sci.}, \textbf{13}, 4, 911-933
(2015).

\bibitem{CG22} Y. Chen, X. Gu. An Improved Berry-Esseen Bound of Least Squares Estimation for Fractional Ornstein-Uhlenbeck Processes. arXiv:2210.00420 (2022).

\bibitem{CHW17} Y. Chen, Y. Hu, Z. Wang. Parameter Estimation of Complex Fractional Ornstein-Uhlenbeck Processes with Fractional Noise, {\it ALEA, Lat. Am. J. Probab. Math. Stat.}, {\bf 14}, 613-629 (2017).

\bibitem{CKL20} Y. Chen, N. Kuang, Y. Li. Berry-Ess\'een bound for the parameter estimation of fractional Ornstein-Uhlenbeck processes. \textit{Stochastics and Dynamics}, \textbf{20}, 2050023, (2020).

\bibitem{CL21} Y. Chen, Y. Li. Berry-Ess\'een bound for the parameter estimation of fractional Ornstein-Uhlenbeck processes with the hurst parameter $H\in (0,1/2)$. \textit{Communications in Statistics-Theory and Methods}, \textbf{50}(13), 2996-3013, (2021).

\bibitem{CZ21} Y. Chen, H. Zhou. Parameter estimation for an Ornstein-Uhlenbeck process driven by a general gaussian noise. \textit{Acta Mathematica Scientia}, \textbf{41B}(2): 573-595, (2021).

\bibitem{CKM03} P. Cheridito, H. Kawaguchi, M. Maejima, Fractional
Ornstein-Uhlenbeck processes. \textit{Electron. J. Probab.} \textbf{8},
1-14 (2003).

\bibitem{CG11} F. Comte, V. Genon-Catalot. Estimation for L\'evy processes
from high frequency data within a long time interval. \textit{Annals
of Statistics}, \textbf{39}, 803-837 (2011).

\bibitem{CQ02} L. Coutin, Z. Qian. Stochastic analysis, rough path
analysis and fractional Brownian motions. \textit{Probab. Theory Relat.
Fields}, \textbf{122}, 108-140 (2002).

\bibitem{DFM16} J. Diehl, P. Friz, H. Mai. Pathwise stability of
likelihood estimators for diffusions via rough paths, \textit{Ann.
Appl. Probab.}, \textbf{26}, 4, 2169-2192 (2016).

%\bibitem{DU99} L. Decreusefond, A. S. \"Ustunel. Stochastic Analysis of the fractional Brownian Motion. {\it Potential Analysis}, {\bf 10}, 177-214 (1999).

%\bibitem{EvdH03} R.J. Elliott, J. van der Hoek. A General Fractional White Noise Theory and Applications to Finance, {\it Math. Financ.} {\bf 13}, 2, 301-330 (2003).
\bibitem{Fas13} V. Fasen. Statistical estimation of multivariate
Ornstein-Uhlenbeck processes and applications to co-integration, \textit{J.
Econometrics}, \textbf{172}, 2, 325-337 (2013).

%\bibitem{Faz94} I. Fazekas. Maximum likelihood estimators of parameters of multidimensional stationary Gaussian AR processes. {\it Comput. Math. Appl.} {\bf 27}, 19-24 (1994).
\bibitem{FI13} J. Fouque, T. Ichiba. Stability in a model of interbank
lending. \textit{SIAM J. Financial Math.}, \textbf{4}, 1, 784-803
(2013).

\bibitem{FH14} P. Friz, M. Hairer. A Course on Rough Paths: With
an Introduction to Regularity Structures, Springer, New York (2014).

\bibitem{FV10} P. Friz, N. Victoir. Multidimensional Stochastic Processes
as Rough Paths. Theory and Applications. \textit{Cambridge Studies
in Advanced Mathematics}, \textbf{120}. Cambridge Univ. Press, Cambridge,
2010.

\bibitem{Gub04} M. Gubinelli. Controlling rough paths. \textit{J. Functional Analysis}, \textbf{216}, 1, 86-140 (2004).

%\bibitem{GJR14} J. Gatheral, T. Jaisson, M. Rosenbaum. Volatility is rough. {\it ArXiv Mathematics e-prints (2014)}. arXiv: 1410.3394v1.

%\bibitem{GM17} S. Ga\"iffas, G. Matulewicz. Sparse inference of the drift of a high-dimensional Ornstein-Uhlenbeck process. {\it ArXiv Mathematics e-prints (2017)}. arXiv:1707.03010v1.

%\bibitem{Gu04} M. Gubinelli. Controlling rough paths. {\it J. Funct. Anal.} {\bf 216} 86-140 (2004). MR2091358

%\bibitem{HJL17} B. Horvath, A. Jacquier, C. Lacombe. Asymptotic behaviour of randomised fractional volatility models. {\it ArXiv Mathematics e-prints (2017)}. arXiv:arXiv:1708.01121v1.
\bibitem{Hu17} Y. Hu. Analysis on Gaussian spaces. \textit{World
Scientific Publishing Co. Pte. Ltd.}, Hackensack, NJ (2017). ISBN
978-981-3142-17-6.

\bibitem{HN10} Y. Hu, D. Nualart. Parameter estimation for fractional
Ornstein-Uhlenbeck processes. \textit{Stat. Probab. Lett.}, \textbf{80},
11-12, 1030-1038 (2010).

\bibitem{HNZ17} Y. Hu, D. Nualart, H. Zhou. Parameter estimation
for fractional Ornstein-Uhlenbeck processes of general Hurst parameter,
\textit{Stat. Inference Stoch. Process.} (2017).

\bibitem{HNZ19} Y. Hu, D. Nualart, H. Zhou. Drift parameter estimation for nonlinear stochastic differential equations driven by fractional Brownian motion. \textit{Stochastics}, 91(8), 1067-1091, (2019).

%\bibitem{HO03} Y. Hu, B. Oksendal. Fractional white noise calculus and applications to finance. {\it Infin. Dimens. Anal. Quantum. Probab. Relat. Top.}, {\bf 6}, 1-32 (2003).

%\bibitem{JPS17} A. Jacquier, M. Pakkanen, H. Stone. Pathwise large deviations for the Rough Bergomi model. {\it ArXiv Mathematics e-prints (2017)}. arXiv:1706.05291v2.
\bibitem{JS03} J. Jacod, A.N. Shiryaev. Limit Theorems for Stochastic
Processes, 2nd ed. \textit{Springer-Verlag,} (2003).

\bibitem{KLB02} M.L. Kleptsyna, A. Le Breton. Statistical analysis
of the fractional Ornstein-Uhlenbeck type process. \textit{Stat. Inference
Stoch. Process.} \textbf{5}, 229-248 (2002).

\bibitem{KMR2017} A. Kukush, Y. Mishura, K. Ralchenko. Hypothesis testing of the drift parameter sign for fractional Ornstein-Uhlenbeck process. \textit{arXiv}, (2016).

\bibitem{LS77} R.S. Liptser, A.N. Shiryaev. Statistics of Random
Processes, I. General Theory. \textit{Springer-Verlag New York,} (1977).

\bibitem{LS78} R.S. Liptser, A.N. Shiryaev. Statistics of Random
Processes, II. Applications.\textit{ Springer-Verlag New York,} (1978).

\bibitem{Lyons98} T. Lyons. Differential equations driven by rough
signals. \textit{Rev. Mat. Iberoamericana}, \textbf{14}, 215-310 (1998).

\bibitem{LCL} T. Lyons, M. Caruana, and T. L\'evy. Differential equations
driven by rough paths. Springer, 2007.

\bibitem{LQ02} T. Lyons, Z. Qian. System Control and Rough Paths.
\textit{Oxford Univ. Press}, Oxford (2002). M

%\bibitem{MvN68} B.B. Mandelbrot, J.W. van Ness. Fractional Brownian Motions, Fractional Noises and Applications, {\it SIAM Rev,} {\bf 10}, 4, 422-437 (1968).
\bibitem{MZ09} P. Mykland, L. Zhang. Inference for continuous semimartingales
observed at high frequency. \textit{Econometrica}, \textbf{77},1403-1445
(2009).

\bibitem{NTU10} A. Neuenkirch, S. Tindel, J. Unterberger. Discretizing
the fractional L\'evy area, \textit{Stoc. Process Appl.}, 120, 223-254
(2010).

\bibitem{Norv15} R. Norvaisa, Weighted power variation of integrals
with respect to a Gaussian process. \textit{Bernoulli}, \textbf{21},
2, 1260-1288 (2015).

%\bibitem{Nov72} A. Novikov. On the estimation of parameters of diffusion processes. {\it Studia Sci. Math. Hungarica}, {\bf 7}, 201-209 (1972).

%\bibitem{PvZ96} G. Pap, M. van Zuijlen. Parameter Estimation with Exact Distribution for Multidimensional Ornstein-Uhlenbeck Processes. {\it J. Multivar. Analy.}, {\bf 59}, 153-165 (1996).
\bibitem{PL11} A. Papavasiliou, C. Ladroue. Parameter estimation
for rough differential equations. \textit{Annals of Statistics}, \textbf{39},
4, 2047-2073 (2011).

\bibitem{Pick69} J. Pickands, Asymptotic properties of the maximum
in a stationary Gaussian process. \textit{Trans. Am. Math. Soc.},
\textbf{145}, 75-86 (1969).

\bibitem{PT00} V. Pipiras, M.S. Taqqu. Integration questions related
to fractional Brownian motion, \textit{Probab. Theory Related Fields},
\textbf{118}, 251-291 (2000).

\bibitem{QX} Z. Qian, X. Xu. It\^o integrals for fractional Brownian
motion and application to option pricing, \textit{arXiv preprint} arXiv:1803.00335 (2018).

\bibitem{Rao99-a} P. Rao, Semimartingales and their Statistical Inference,
\textit{Chapman \& Hall/CRC}, (1999).

\bibitem{Rao99} P. Rao, Statistical Inference for Diffusion Type
Processes, \textit{Arnold}, (1999).

%\bibitem{SGA17} R. Singh, D. Ghosh, R. Adhikari. Fast Bayesian inference of the multivariate Ornstein-Uhlenbeck process. {\it ArXiv Mathematics e-prints (2017)}. arXiv:1706.04961v1.
\bibitem{SV79} D.W. Stroock, S.R.S. Varadhan, Multidimensional Diffusion
Processes. \textit{Springer-Verlag}, Berlin, Germany (1979).

\bibitem{TV07} C. Tudor, F. Viens. Statistical aspects of the fractional
stochastic calculus. \textit{Annals of Statistics}, \textbf{35}, 3,
1183-1212 (2007).

\bibitem{WXY23} X. Wang, W. Xiao, J. Yu. Modeling and forecasting realized volatility with the fractional Ornstein-Uhlenbeck process. \textit{Journal of Econometrics}, \textbf{232}(2), 389-415 (2023).

\end{thebibliography}
\end{document}